\theoremstyle{plain}
\newtheorem{theorem}{Theorem}
\newtheorem{lemma}{Lemma}
\newtheorem{corollary}{Corollary}
\newtheorem{property}{Property}
\newtheorem{prop}{Proposition}
\newtheorem{proposition}{Proposition}
\newtheorem{remark}{Remark}
\newtheorem{statment}{Statement}
\newtheorem*{main_theorem}{Main Theorem}
\theoremstyle{definition}
\newtheorem{definition}{Definition}
\theoremstyle{remark}
\begin{document}
\selectlanguage{english}

\title[] {Structure and minimal generating sets of Sylow 2-subgroups of alternating groups, properties of its commutator subgroup} 
\author[]{Ruslan Skuratovskii}


\begin{abstract}

In this article the research of Sylows $p$-subgroups of  ${{A}_{n}}$ and ${{S}_{n}}$, which was started in \cite{Dm,Sk, Paw} is continued.
Let $Syl_2{A_{2^k}}$ and $Syl_2{A_{n}}$ be Sylow 2-subgroups of corresponding alternating groups $A_{2^k}$ and $A_{n}$. 
We find a least generating set and a structure for such subgroups $Sy{{l}_{2}}{{A}_{{{2}^{k}}}}$ and $Syl_2{A_{n}}$ and commutator width of $Syl_2{A_{2^k}}$ \cite{Mur}.

	The purpose of this paper is to research the structure of a Sylow 2-subgroups and to construct a minimal generating set for such subgroups.
 The main result is to prove minimality of this generating set for the above indicated subgroups and also the description of their structure.

Key words:  minimal set of generators; wreath product of group; Sylow subgroups; commutator subgroup, semidirect product.

\end{abstract}

\maketitle

\section{Introduction }
The aim of this paper is to research the structure of Sylow 2-subgroups of $A_{2^k}$, $A_n$ and to construct a minimal generating set for $Syl_2 {A_{2^k}}$. The case of Sylow subgroup where $p=2$ is very special because group $C_2\wr C_2\wr C_2 \ldots \wr C_2 $ admits odd permutations, this case was not
fully investigated in \cite{Dm,Sk}. The authors of \cite{Dm, Paw} didn't prove minimality of found by them system of generators for such Sylow 2-subgroups of $A_n$ and structure of a Sylow 2-subgroups was found by them not fully, 
 for case $n=2^k$ structure of $Syl_2 A_n$ was not found.
 This question is up till now under consideration.
There was a mistake in a statement about irreducibility of a set of $k+1$ elements for $Syl_2(A_{2^k})$ that was in abstract \cite{Iv} on Four ukraine conference
of young scientists in 2015 year.
These groups have applications in the automaton theory, because if all states of automaton $A$ have output function that can be presented as cycle $(1,2,...,p)$ then group $G_A(X)$ of this automaton, where $X$ is finite alphabet, is Sylows $p$-subgroup of the group of all automaton transformations $GA(X)$ \cite{GrNe}. Also in this case holds $Syl_p(AutX)>FGA(X)$ \cite{GrNe}, where $ \mid X \mid \in \mathbb{N} $. Thus, finding the minimum size of generating set  number is important. Let $X^*$ be the free
 monoid freely generated by $X$.
All undeclared terms are from \cite{Ne, Gr}.

\section{ Preliminaries}
 The set $X^*$ is naturally a vertex set of a regular rooted tree, i.e. a connected graph without cycles
and a designated vertex $v_0$ called the root, in which two words are connected by an edge if and only if they are of form $v$ and $vx$, where $v\in X^*$, $x\in X$.
The set $X^n \subset X^*$ is called the $n$-th level of the tree $X^*$
and $X^0 = \{v_0\}$. We denote by $v_{j,i}$ the vertex of $X^j$, which has the number $i$.
The subtree of $X^{*}$ induced by the set of vertices $\cup_{i=0}^k X^i$ is denoted by $X^{[k]}$.
Note that the unique vertex $v_{k,i}$ corresponds to the unique word $v$ in alphabet $X$.
For every automorphism $g\in Aut{{X}^{*}}$ and every word $v \in X^{*}$  define the section (state) $g_{(v)} \in AutX^{*}$ of $g$ at $v$ by the rule: $g_{(v)}(x) = y$ for $x, y \in X^*$  if and only if $g(vx) = g(v)y$.
 The restriction of the action of an automorphism $g\in AutX^*$ to the subtree $X^{[l]}$ is denoted by $g_{(v)}|_{X^{[l]}}$.
 A restriction $g_{(v)}|_{X^{[1]}} $ is called the vertex permutation (v.p.) of $g$ in a vertex $v$. 
Let us introduce conventional signs for a v.p. state value of $\alpha $ in $v_{ki}$ as ${{s}_{ki}}(\alpha )$ we put that ${{s}_{ki}}(\alpha )=1$ if $\alpha_{(v_{ki})} |_{X^{[1]}}(x)=y,\,\,\,x\ne y $  
 such state of v.p. is active, and $s_{ki}(\alpha )=0$ if $\alpha_{(v_{ki})} |_{X^{[1]}}(x)=x $
 such state of v.p. is trivial.
Let us label every vertex of ${{X}^{l}},\,\,\,0\le l<k$ by sign 0 or 1 in relation to state of v.p. in it. Obtained by such way a vertex-labeled regular tree is element of $Aut{{X}^{[k]}}$.

We denote by $v_{j,i} X^{[k-j]} $  subtree of $X^{[k]}$ with a root in $v_{j,i}$.
An automorphism of $X^{[k]}$ with non-trivial states of v.p. in some of
 $v_{1,1}$, $v_{1,{2}}$, $v_{2,{1}}$,..., $v_{2,4}$, ... ,$v_{m,1}$, ... ,$v_{m,j}$, $ m < k, j \leq 2^m$ is denoted by $\mathop{\beta}_{1,(i_{11},i_{12});...; l,(i_{l1},...,i_{l2^l});...;{m},(i_{m1},...,i_{m2^m})}$
where the index that stands straight before parentheses are number of level in
parentheses we write a
tuple
  of states of v.p.  of this level. In other words we set $i_{mj}=0$ if v.p. in $v_{mj}$ is trivial, $i_{mj}=1$ in other case, i.e.  $i_{mj} = {{s}_{mj}}(\mathop{\beta})$, where $\mathop{\beta}\in AutX^{[k]}$, $m<k$.
  If for some $l$ all $i_{lj}=0$ then
     $2^l$-tuple $l,(i_{l1} ,..., i_{l 2^l})$ does not figure in indexes of $\beta$. But if numbers of active vertices are certain, for example $v_{j,1}$ and $v_{j,s}$, we can use more easy notation $\mathop{\beta}_{j,(1,s);}$, where in parentheses numbers of vertices with active state of v.p. from level $j$. If in parentheses only one index then parentheses can be omitted for instance $\mathop{\beta}_{j,(s);}=\mathop{\beta}_{j,s;}$.
Denote by $\tau_{i,...,j}$ the automorphism of $X^{[k]}$, which has a non-trivial v.p.
only in vertices $v_{k-1,i}$, ... ,$v_{k-1, j}$, $j  \leq 2^{k-1} $ of the
level $X^{k-1}$.
Denote by $\tau$ the automorphism $\tau_{1,2^{k-1}}$.
Let us consider special elements such that: $ \mathop{\alpha}_{0}=\mathop{\beta}_{0}=\mathop{\beta}_{0,(1,0,...,0)}, \mathop{\alpha}_{1}=\mathop{\beta}_{1}=\mathop{\beta}_{1,(1,0,...,0)}
, \ldots ,\mathop{\alpha}_{l}=\mathop{\beta}_{l}=\mathop{\beta}_{l,(1,0,...,0)} $.

\section{ Main result  }
Recall that a wreath product of permutation groups is associative construction.
We consider $C_2$ as additive group with two elements 0, 1. For constructing a wreath product we define an action of $C_2$ by shift on $X=\{0, 1\}$. As well known that $AutX^{[k-1]} \simeq \underbrace {C_2 \wr ...\wr C_2}_{k-1}$ \cite{GrNe}.

 \begin{lemma} \label{even} 

  Every automorphism that has active v.p. only on $X^l$, $l<k-1$ acts by even permutation on $X^{k}$.

\end{lemma}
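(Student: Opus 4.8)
The plan is to reduce the claim to a single structural observation: under the hypothesis the permutation induced on the last level $X^k$ is assembled from two identical copies of the permutation induced on $X^{k-1}$, and a permutation built from two identical copies is automatically a square in the symmetric group, hence even. First I would unpack the hypothesis. Saying that an automorphism $g$ has active v.p. only on levels $X^l$ with $l<k-1$ means precisely that every vertex $v_{k-1,i}$ of the penultimate level carries a trivial v.p., i.e. $s_{k-1,i}(g)=0$ for all admissible $i$; in particular nothing is assumed about the levels above $k-1$, so $g$ may be an arbitrary automorphism of the truncated tree $X^{[k-1]}$ extended rigidly to the leaves.

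Using the recursive description of the action from the Preliminaries, the key step is to show that for every word $w\in X^{k-1}$ and every letter $x\in X$ one has $g(wx)=g(w)\,x$. Indeed, the image of the leaf $wx$ is obtained by following $g$ along the prefix $w$ to reach the level-$(k-1)$ vertex $g(w)$, and then applying to the last letter $x$ the v.p. of $g$ at that vertex; since this v.p. is trivial by hypothesis, the last letter is left unchanged. Consequently the induced permutation of $X^k$ preserves the partition $X^{k}=X^{k-1}0\,\sqcup\,X^{k-1}1$ into leaves ending in $0$ and leaves ending in $1$, and on each of the two blocks it acts exactly as the permutation $\bar g$ induced by $g$ on $X^{k-1}$. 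Thus $g|_{X^{k}}$ is the disjoint union of two copies of $\bar g$, whence $\operatorname{sgn}(g|_{X^{k}})=\operatorname{sgn}(\bar g)^{2}=1$, which is the assertion.

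The only genuine obstacle is the bookkeeping in the middle step, where one must be careful to read off the v.p. at the correct (image) vertex $g(w)$ rather than at $w$; once the identity $g(wx)=g(w)x$ is in hand, the sign computation is forced. As an independent cross-check I would argue through the elementary automorphisms $\beta_{l}$: one with a single active v.p. at a vertex of level $l$ rigidly interchanges the two subtrees hanging below that vertex, which realizes $2^{k-l-1}$ disjoint transpositions of leaves, an even permutation exactly when $k-l-1\ge 1$, i.e. when $l<k-1$. Since the sign of the action on $X^{k}$ is a homomorphism and every automorphism supported on levels below $k-1$ is a product of such elementary automorphisms, this recovers the same conclusion.
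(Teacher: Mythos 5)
Your proposal is correct, and its primary argument takes a genuinely different route from the paper's proof. The paper argues through elementary automorphisms: a single active v.p. at a vertex of level $l$ interchanges the two subtrees hanging below that vertex and therefore acts on $X^k$ as a product of $2^{k-l-1}$ disjoint transpositions (all other leaves fixed), which is even exactly when $l<k-1$; evenness of a general automorphism is then inherited from multiplicativity of the sign, with the needed factorization into single-v.p. elementary automorphisms left implicit. That is precisely your closing cross-check, so your two arguments bracket the paper's one. Your main argument is cleaner and more direct: it isolates the only part of the hypothesis that actually matters, namely triviality of all v.p. on $X^{k-1}$, derives the identity $g(wx)=g(w)x$ for $w\in X^{k-1}$, $x\in X$, and concludes that $g|_{X^k}$ preserves the partition $X^{k-1}0\sqcup X^{k-1}1$ and acts on each block as $\bar g=g|_{X^{k-1}}$, whence $\mathrm{sgn}(g|_{X^k})=\mathrm{sgn}(\bar g)^2=1$. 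This buys two things the paper's sketch does not: rigor without any appeal to a decomposition into elementary factors, and the sharper structural conclusion that the induced leaf permutation is a doubled permutation (indeed a square), not merely an even one. What the paper's computation buys instead is the explicit cycle type of each elementary generator, recorded there (with typos in the exponents) as $2^{k-l-1}$ two-cycles plus fixed points. One small remark on your cross-check: it does require the fact that every automorphism with active v.p. confined to levels $<k-1$ is a product of single-v.p. automorphisms at such levels; this is true (automorphisms whose active v.p. lie in a fixed set of levels form a subgroup, and the top-down level-by-level factorization respects this support), but it deserves a sentence — and this is exactly the step your main argument renders unnecessary.
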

\begin{proof}
Actually every transposition in vertex from $X^l$, $l<k-1$ acts on even number of pair of vertexes because of binary tree structure. More precisely it realize even permutation on the set $X^{k}$ with cyclic structure \cite{Sh}  $(1^{2^{k-1}-2^{k-l-l}},2^{2^{k-l-l}})$ because it formed by the structure of binary tree.
 \end{proof}
\begin{corollary}\label{B_k-1}
Due to Lemma \ref{even} automorphisms from $Aut X^{[k-1]}=\langle \mathop{\alpha}_{0}. ... , \mathop{\alpha}_{k-2} \rangle$ form a group ${B_{k-1}}= \underbrace {C_2 \wr ...\wr C_2}_{k-1} $ which acts on
$X^{k-1}$ by even permutations. Size of $B_{k-1}$ equal to $2^{2^{k-1}-1}$.
\end{corollary}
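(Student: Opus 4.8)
The plan is to treat the three claims of the corollary separately --- the even-permutation property, the identification $B_{k-1}=Aut X^{[k-1]}\simeq \underbrace{C_2\wr\cdots\wr C_2}_{k-1}$, and the order $2^{2^{k-1}-1}$ --- with the generation claim carrying essentially all of the work.

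The even-permutation property is immediate from Lemma \ref{even}. Each generator $\alpha_l=\beta_{l,(1,0,\ldots,0)}$ has a nontrivial vertex permutation at the single vertex $v_{l,1}$ of a level $l$ with $0\le l\le k-2$, hence $l<k-1$; thus Lemma \ref{even} shows that $\alpha_l$ acts on the leaf set $X^{k}$ by an even permutation. Since the even permutations of the $2^{k}$ leaves $X^{k}$ form the alternating group $A_{2^{k}}$, the whole group $B_{k-1}=\langle\alpha_0,\ldots,\alpha_{k-2}\rangle$ lies in $A_{2^{k}}$ and hence acts by even permutations.

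The heart of the proof is to show that the economical set $\{\alpha_0,\ldots,\alpha_{k-2}\}$ --- one single-vertex automorphism per level --- generates all of $Aut X^{[k-1]}$. I would argue by induction on the depth, exploiting the self-similarity of the binary tree. Set $G_m=\langle\alpha_0,\ldots,\alpha_{m-1}\rangle$, acting on $X^{[m]}$; the base case $G_1=\langle\alpha_0\rangle=C_2=Aut X^{[1]}$ is clear. For the inductive step, note that $\alpha_1,\ldots,\alpha_m$ all have their active vertex in the left subtree $T_0$ rooted at $v_{1,1}$, and under the canonical identification of $T_0$ with $X^{[m]}$ (which lowers every level by one) they become precisely the generators $\alpha_0,\ldots,\alpha_{m-1}$ of the smaller tree; by the induction hypothesis they generate $Aut(T_0)$. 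Conjugating this copy by $\alpha_0$, the root transposition exchanging $T_0$ with the right subtree $T_1$, yields $Aut(T_1)$. Hence $G_{m+1}$ contains the full base group $Aut(T_0)\times Aut(T_1)$ together with $\alpha_0$, so $G_{m+1}=(Aut X^{[m]}\times Aut X^{[m]})\rtimes C_2=Aut X^{[m]}\wr C_2=Aut X^{[m+1]}$. Taking $m=k-1$ gives $B_{k-1}=Aut X^{[k-1]}\simeq\underbrace{C_2\wr\cdots\wr C_2}_{k-1}$.

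For the order I would count directly: a tree automorphism of $X^{[k-1]}$ is determined by independent choices of a vertex permutation in $C_2$ at each interior vertex, that is, at each vertex of levels $0,1,\ldots,k-2$. There are $1+2+\cdots+2^{k-2}=2^{k-1}-1$ such vertices, so $|B_{k-1}|=2^{2^{k-1}-1}$, which is also the order of the iterated wreath product above. The one genuinely delicate point I expect is the inductive generation step: one must check carefully that $\alpha_1,\ldots,\alpha_m$ restrict to the correctly reindexed generators of $T_0$ and that conjugation by $\alpha_0$ transports $Aut(T_0)$ onto $Aut(T_1)$, so that the wreath decomposition is genuinely recovered; the even-permutation and order claims are then routine.
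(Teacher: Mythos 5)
Your proof is correct, but it takes a genuinely different---and far more self-contained---route than the paper, because the paper gives this corollary no proof at all. In the paper, the evenness of the action is folded into the statement itself (``Due to Lemma \ref{even}\dots''), while the identification $\langle\alpha_0,\ldots,\alpha_{k-2}\rangle = AutX^{[k-1]}\simeq \underbrace{C_2\wr\cdots\wr C_2}_{k-1}$ and the order $2^{2^{k-1}-1}$ are imported as known facts: the wreath-product description is quoted from \cite{GrNe}, and the fact that the $\alpha_i$ form a (minimal) generating set of $AutX^{[k-1]}$ is quoted from \cite{Gr,Sk} (and used again in Lemma \ref{gen} and Lemma \ref{rk}). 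You instead prove the generation claim from scratch by induction on depth, via self-similarity: the shifted generators $\alpha_1,\ldots,\alpha_m$ generate the automorphism group of the left subtree $T_0$, conjugation by the root swap $\alpha_0$ transports that copy onto the right subtree $T_1$, and the two copies together with $\alpha_0$ reassemble $(AutX^{[m]}\times AutX^{[m]})\rtimes C_2 = AutX^{[m+1]}$; the order then falls out of counting the $2^{k-1}-1$ interior vertices. Your inductive step is sound --- the generated group acts trivially on $T_1$, so it really is $Aut(T_0)\times\{1\}$, and conjugation by $\alpha_0$ yields $\{1\}\times Aut(T_1)$. What your approach buys is independence from the external references, actually verifying the generation fact on which Lemma \ref{gen}, Theorem \ref{isomor} and Lemma \ref{rk} later rely; what the paper's approach buys is brevity, treating the corollary as bookkeeping on top of standard results.

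One reading choice you made silently deserves to be made explicit: you prove that $B_{k-1}$ acts by even permutations on $X^{k}$, the leaf level of $X^{[k]}$, which is indeed what Lemma \ref{even} yields and what is needed later (it is why $B_{k-1}$ embeds into $Syl_2A_{2^k}$). The statement's literal wording ``acts on $X^{k-1}$ by even permutations'' cannot be taken at face value, since the generator $\alpha_{k-2}$ restricts to a single transposition of $X^{k-1}$, an odd permutation; your correction to $X^k$ is the right reading.
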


Let us denote by $W_{k-1}$ the subgroup of $Aut X^{[k]}$ such that has active states only on $ X^{k-1}$ and number of such states is even, i.e. $W_{k-1} \vartriangleleft  St_{G_k}(k-1)$ \cite{Ne}.

\begin{prop} \label{ordW}

The size of ${{W}_{k-1}}$ is equal to ${{2}^{{{2}^{k-1}}-1}},\,\,k > 1$ and its structure is $(C_2)^{{{2}^{k-1}}-1}$.
\end{prop}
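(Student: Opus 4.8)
The plan is to realise $W_{k-1}$ as an index-two subgroup of the elementary abelian group of all automorphisms supported on the single level $X^{k-1}$, and then to read off both its order and its structure at once.

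First I would describe the ambient group. An automorphism with active v.p.\ only on $X^{k-1}$ is completely determined by the tuple $(i_{k-1,1},\dots,i_{k-1,2^{k-1}})$ recording which of the $2^{k-1}$ vertices of that level carry a non-trivial v.p. Since distinct vertices $v_{k-1,j}$ sit over disjoint pairs of leaves of $X^{k}$, the corresponding transpositions of children act on disjoint two-element sets, hence they commute and are involutions. Therefore these automorphisms form a group $A\cong (C_2)^{2^{k-1}}$, and the assignment $\beta \mapsto (i_{k-1,1},\dots,i_{k-1,2^{k-1}})$ is an isomorphism onto $(C_2)^{2^{k-1}}$ carrying the product in $A$ to coordinatewise addition modulo $2$.

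Next I would identify $W_{k-1}$ inside $A$. Consider the parity map $\phi\colon A\to C_2$ given by $\phi(\beta)=\sum_{j} i_{k-1,j} \pmod 2$, which is a group homomorphism because the group law in $A$ is coordinatewise addition. By definition $W_{k-1}=\ker\phi$ is exactly the set of elements with an even number of active states. It is worth recording that a single active v.p.\ on $X^{k-1}$ induces one transposition of adjacent leaves on $X^{k}$, so $\phi(\beta)$ equals the parity of the permutation that $\beta$ induces on $X^{k}$; thus $W_{k-1}$ consists precisely of the even permutations in $A$, in agreement with Lemma \ref{even} and with the inclusion $W_{k-1}\vartriangleleft St_{G_k}(k-1)$.

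Finally I would compute the two invariants. For $k>1$ the level $X^{k-1}$ is non-empty, so $\phi$ is surjective and $W_{k-1}=\ker\phi$ has index $2$ in $A$; hence $|W_{k-1}| = \frac{1}{2}\,2^{2^{k-1}} = 2^{2^{k-1}-1}$. Being a subgroup of the elementary abelian $2$-group $A$, the group $W_{k-1}$ is itself elementary abelian, and since its order is $2^{2^{k-1}-1}$ its rank is $2^{k-1}-1$, that is, $W_{k-1}\cong (C_2)^{2^{k-1}-1}$. To make the structure fully explicit one may exhibit the $2^{k-1}-1$ commuting involutions $\tau_{1,j}$ with $2\le j\le 2^{k-1}$ (each having exactly the two active states $v_{k-1,1}$ and $v_{k-1,j}$) and check that they are independent and generate $\ker\phi$. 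The only genuinely delicate point is this last independence-and-spanning verification, i.e.\ confirming that the chosen even-weight elements form a basis rather than merely lie in $W_{k-1}$; everything else is the standard argument that an even-weight code has index two in the full power of $C_2$.
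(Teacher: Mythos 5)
Your proposal is correct and follows essentially the same route as the paper: both realise $W_{k-1}$ as the even-parity half of the elementary abelian group $(C_2)^{2^{k-1}}$ of all automorphisms supported on $X^{k-1}$, deduce $|W_{k-1}|=2^{2^{k-1}-1}$ from the index-two condition, and read off the structure $(C_2)^{2^{k-1}-1}$ from the fact that a subgroup of an elementary abelian $2$-group is elementary abelian. Your version is in fact tidier than the paper's, since you make the parity map an explicit surjective homomorphism with kernel $W_{k-1}$ (the paper phrases this loosely as ``only half of all permutations'' and writes the quotient ${C_2^{2^{k-1}}}/{C_2}$), and your statement of the rank avoids the paper's typographical slips where the exponent $2^{k-1}-1$ degenerates to $k-1$.
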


  \begin{proof}
On ${{X}^{k-1}}$ we have ${{2}^{k-1}}$ vertices where can be elements of a group ${{V}_{k-1}}\simeq {{C}_{2}}\times {{C}_{2}}\times ...\times {{C}_{2}}\simeq {{({{C}_{2}})}^{k-1}}$, but as a result of the fact that ${X}^{k-1}$ contains only even number of non trivial v.p. from $X^{k-1}$, there are only half of all permutations from ${{V}_{k-1}} \simeq St_{G_k}(k-1)$ on $X^{k-1}$. So it is subgroup ${{W}_{k-1}}\simeq {}^{C_{2}^{{{2}^{k-1}}}}/{}_{{{C}_{2}}}$ of ${{V}_{k-1}}$. So we can state that $|{{W}_{k-1}}|=2^{k-1}-1$, $W_{k-1}$ has $k-1$ generators and we can consider ${W}_{k-1}$ as vector space of dimension $k-1$.
 \end{proof}

For example let us consider the subgroup $W_{4-1}$ of $A_{2^4}$ its size is $2^{2^{4-1}-1}=2^7$ and $|A_{2^4}|=2^{14}$.
Let us denote by $G_k$ the subgroup of $Aut X^{[k]}$ such that $G_k  \simeq  B_{k-1} \ltimes W_{k-1}$.
 \begin{lemma}\label{gen} 
 The elements $\tau $ and ${{\alpha }_{0}},...,{{\alpha }_{k-1}}$ generate arbitrary element ${{\tau }_{ij}}$.
\end{lemma}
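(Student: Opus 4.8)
The plan is to realize every $\tau_{ij}$ as a product of two single-vertex automorphisms, each obtained from a single generator by conjugation inside $\langle \tau, \alpha_0,\ldots,\alpha_{k-1}\rangle$. First I would observe that $\alpha_{k-1}=\beta_{k-1,(1,0,\ldots,0)}$ is precisely the automorphism having a unique active v.p., located at the vertex $v_{k-1,1}$; in the single-index notation this is $\tau_1$. Thus among the generators we already possess one level-$(k-1)$ automorphism whose support is a single vertex, and it will serve as the seed from which all others are produced.

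Next I would invoke Corollary \ref{B_k-1}: the subgroup $\langle \alpha_0,\ldots,\alpha_{k-2}\rangle = B_{k-1} = Aut X^{[k-1]}$ is the full automorphism group of the truncated tree, so it acts transitively on the $2^{k-1}$ vertices of the level $X^{k-1}$. Hence for each index $m$ there is some $g_m\in B_{k-1}$ with $g_m(v_{k-1,1})=v_{k-1,m}$. The decisive computation is then the conjugation $g_m\,\tau_1\,g_m^{-1}$: since every element of $B_{k-1}$ carries trivial v.p. on the level $X^{k-1}$ and merely permutes that level together with the subtrees hanging from it, conjugation transports the single active state from $v_{k-1,1}$ to $v_{k-1,m}$ and spawns no new active states. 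Therefore $g_m\,\tau_1\,g_m^{-1}=\tau_m$, and all the single-vertex automorphisms $\tau_1,\ldots,\tau_{2^{k-1}}$ belong to the generated subgroup.

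Finally, since the $\tau_m$ are pairwise commuting involutions supported at distinct vertices, their supports combine by symmetric difference, so $\tau_i\tau_j=\tau_{ij}$ for any $i\neq j$; this exhibits every $\tau_{ij}$ as a word in $\tau,\alpha_0,\ldots,\alpha_{k-1}$. I expect the only genuine obstacle to be the verification in the middle paragraph, namely that conjugation by a level-preserving automorphism relocates the active state cleanly without creating auxiliary ones. This is the standard conjugation rule for sections of rooted-tree automorphisms, and here it reduces to the fact that the generators of $B_{k-1}$ have no v.p. on $X^{k-1}$; the transitivity of $B_{k-1}$ on $X^{k-1}$ that supplies the required $g_m$ is guaranteed by Corollary \ref{B_k-1}. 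As an aside, the element $\tau=\tau_{1,2^{k-1}}$ could replace $\alpha_{k-1}$ in this argument: conjugating $\tau$ by $B_{k-1}$ yields every pair whose two active vertices lie in different halves of the tree, and products of such crossing pairs again recover all $\tau_{ij}$, but the single-vertex route above is the most economical.
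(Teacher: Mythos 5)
Your main argument rests on reading the statement literally and taking $\alpha_{k-1}=\beta_{k-1,(1,0,\ldots,0)}$, a single active v.p. at $v_{k-1,1}$, as one of the available generators. That index is a misprint for $\alpha_{k-2}$: the paper's own proof of Lemma \ref{gen} uses only $\alpha_0,\ldots,\alpha_{k-2}$ and $\tau$ and concludes $\left\langle \alpha_0,\ldots,\alpha_{k-2},\tau\right\rangle \simeq G_k$; Lemma \ref{ordG_k} defines $G_k=\langle \alpha_0,\ldots,\alpha_{k-2},\tau\rangle$; and Theorem \ref{isomor} cites the present lemma for exactly that $k$-element set $S_{\alpha}$. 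The mathematical reason the literal reading cannot be intended is parity: your seed element $\tau_1=\alpha_{k-1}$ is a single transposition of two leaves of $X^{k}$, hence an odd permutation, so it does not lie in $Syl_2 A_{2^k}$ at all (by Lemma \ref{even} only v.p. on levels $l<k-1$ are automatically even; a lone level-$(k-1)$ v.p. is odd). Once $\tau_1$ is admitted, the group generated is the full $AutX^{[k]}\simeq Syl_2 S_{2^k}$ rather than $G_k$, the conclusion $G_k\simeq Syl_2 A_{2^k}$ of Theorem \ref{isomor} collapses, and with it the paper's main count of $k$ generators. The entire difficulty of the lemma is to manufacture every $\tau_{ij}$ (an even element: two commuting leaf transpositions) while never leaving the even subgroup, i.e., using only $\tau$ and $\alpha_0,\ldots,\alpha_{k-2}$; your main route sidesteps this difficulty by invoking precisely the one element that must be avoided.

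The correct argument is the one you relegate to a closing aside, and it is essentially the paper's proof. Conjugating $\tau=\tau_{1,2^{k-1}}$ by elements of $\left\langle \alpha_1,\ldots,\alpha_{k-2}\right\rangle$, which are supported on the subtree $v_{1,1}X^{[k-1]}$ and therefore fix the active v.p. at $v_{k-1,2^{k-1}}$, transports the other active v.p. to any $v_{k-1,i}$ with $i\le 2^{k-2}$, yielding all crossing pairs $\tau_{i,2^{k-1}}$; the products $\tau_{i,2^{k-1}}\tau_{j,2^{k-1}}=\tau_{i,j}$ then give every pair inside the first half, conjugation by $\alpha_0$ transfers these to the second half, and $\tau_{1,i}\,\tau\,\tau_{m,2^{k-1}}=\tau_{i,m}$ produces the remaining mixed pairs --- every element used being even, exactly as required. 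If you promote that paragraph from an aside to the main argument and supply these details, your proof coincides with the paper's; as written, your proof establishes a different statement which is useless for the results that depend on this lemma.
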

 \begin{proof} According to \cite{Gr,Sk} 
  the set  ${{\alpha }_{0}},...,{{\alpha }_{k-2}}$ is minimal generating set for group $Aut{{X}^{[k-1]}}$. Firstly, we shall prove the possibility of generating an arbitrary $\tau_{ij}$, from set $v_{(k-1,i)}$, $1\leq i \leq 2^{k-2} $. 
Since $Aut v_{1,1}X^{[k-2]} \simeq \left\langle {{\alpha }_{1}},...,{{\alpha }_{k-2}} \right\rangle $ acts on ${{X}^{k-1}}$ transitively from it follows existing of an ability to permute vertex with a transposition from automorphism $\tau $ and stands in $v_{k-1,1}$ in arbitrary vertex ${{v}_{k-1,j}},\,\,\,j\le {{2}^{k-2}}$ of $v_{1,1}X^{[k-1]}$. For this goal we act by $\alpha_{k-j}$ at $\tau $: $\alpha_{k-j} \tau \alpha_{k-j} ={{\tau }_{j, 2^{k-2}}}$. Similarly we act on $\tau$ by corespondent $\alpha_{k-i}$ to get $\tau_{i, 2^{k-2}}$ from $\tau $: $\alpha_{k-i}  \tau \alpha_{k-i}^{-1}={{\tau }_{i, 2^{k-2}}}$. Note that automorphisms $\alpha_{k-j}$ and $\alpha_{k-i}, 1<i,j<k-1$ acts only on subtree $v_{1,1}X^{[k-1]}$ that's why they fix v.p. in $v_{k-1, 2^{k-1}}$. Now we see that ${\tau }_{i, 2^{k-2}}{{\tau }_{j, 2^{k-2}}}={{\tau }_{i, j}}$, where $1 \leq i,j < 2^{k-2}$.
To get ${\tau }_{m, l}$ from $v_{1,2}X^{[k-1]}$, i.e. $2^{k-2} < m,l \leq 2^{k-1} $ we use $\alpha_0$ to map ${\tau }_{i, j}$ in ${\tau }_{i+2^{k-2}, j+2^{k-2}}\in v_{1,2} AutX^{[k-1]}$. To construct arbitrary transposition ${\tau }_{i,m}$ from $W_{k-1}$ we have to multiply ${\tau }_{1,i} {\tau } {\tau }_{m,2^{k-1}}={\tau }_{i,m}$.
Let us realize a natural number of $v_{k,l}$, $1<l<2^k$ in 2-adic set of presentation (binary arithmetic).  Then $l={\delta_{{{l}_{1}}}}{{2}^{m_l}}+{\delta_{{{l}_{2}}}}{{2}^{m_l-1}}+...+{\delta_{{{l}_{m_l+1}}}},\,\, \delta_{l_i} \in \{0,1\}$ where is a correspondence between  ${\delta_{{{l}_{i}}}}$ that from such presentation and expressing of automorphisms: $\tau_{l,2^{k-1}} = \prod_{i=1}^{m_l} \alpha_{k-2-(m_{l}-i)}^{\delta_{{l}_{i}}} \tau  \prod_{i=1}^{m_l} \alpha_{k-2-(m_{l}-i)}^{\delta_{l_i}},  1 \leq m_l \leq k-2$.  In other words $\left\langle {{\alpha }_{0}},...,{{\alpha }_{k-2}},\tau  \right\rangle \simeq {{G}_{k}}$.
 \end{proof}

\begin{corollary}\label{genG_k-1} The elements from condition of Lemma \ref{gen} are enough to generate a basis of $W_{k-1}$.
\end{corollary}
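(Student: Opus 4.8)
The plan is to reduce the statement to the explicit computation already carried out in Lemma \ref{gen}. By Proposition \ref{ordW} the group $W_{k-1}$ is elementary abelian of rank $2^{k-1}-1$, so I would treat it as an $\mathbb{F}_2$-vector space whose addition is the coordinatewise sum of the active-state patterns sitting on the $2^{k-1}$ vertices of level $X^{k-1}$; membership in $W_{k-1}$ then amounts to the pattern having even weight. Thus it suffices to produce a set of $2^{k-1}-1$ elements of $W_{k-1}$, each lying in $\langle \tau, \alpha_0, \dots, \alpha_{k-1}\rangle$, that is linearly independent over $\mathbb{F}_2$.

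For the basis I would take the family $\tau_{i,2^{k-1}}$ with $1\le i\le 2^{k-1}-1$. Each such element has exactly two active states, hence even weight, so it lies in $W_{k-1}$; note that $\tau$ itself is the member with $i=1$. Linear independence is immediate: in any nontrivial sum $\sum_{i\in S}\tau_{i,2^{k-1}}$ the coordinate at an index $i\in S$ equals $1$, because only that summand is active there, so the sum is nonzero. Since the number of these elements equals $\dim_{\mathbb{F}_2}W_{k-1}=2^{k-1}-1$, they form a basis; spanning can alternatively be read off from the identity $\tau_{i,j}=\tau_{i,2^{k-1}}\,\tau_{j,2^{k-1}}$, which rewrites every even-weight generator through the proposed basis.

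Finally I would invoke the closing formula of Lemma \ref{gen}: writing the index $i$ in binary, that formula expresses $\tau_{i,2^{k-1}}$ as a word in $\tau$ and $\alpha_0,\dots,\alpha_{k-2}$, so running it over all $i$ with $1\le i\le 2^{k-1}-1$ places the whole basis inside the subgroup generated by the prescribed elements. The only step demanding attention — and the place I expect the real work to sit — is the bookkeeping that this binary-expansion formula really covers every index up to $2^{k-1}-1$ and produces only even-weight elements; the evenness is guaranteed by Lemma \ref{even}, and the exhaustiveness follows from the transitivity of $\langle\alpha_1,\dots,\alpha_{k-2}\rangle$ on $X^{k-1}$ together with the role of $\alpha_0$ in swapping the two halves, both used in Lemma \ref{gen}. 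Once this is checked we obtain $W_{k-1}\subseteq\langle\tau,\alpha_0,\dots,\alpha_{k-1}\rangle$, and the corollary follows.
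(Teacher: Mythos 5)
Your proposal is correct and follows essentially the route the paper intends: the paper states this corollary without a separate proof, treating it as an immediate consequence of Lemma \ref{gen}, whose closing formula produces exactly the elements $\tau_{l,2^{k-1}}$ (and the products $\tau_{1,i}\,\tau\,\tau_{m,2^{k-1}}=\tau_{i,m}$) that you use. Your explicit linear-algebra bookkeeping --- identifying $W_{k-1}$ with the even-weight subspace of $\mathbb{F}_2^{2^{k-1}}$, checking linear independence of the $2^{k-1}-1$ elements $\tau_{i,2^{k-1}}$, and matching the count to $\dim W_{k-1}=2^{k-1}-1$ from Proposition \ref{ordW} --- is a sound and welcome filling-in of details the paper leaves implicit (the paper later exhibits the alternative basis $\tau_{1,2},\tau_{2,3},\dots,\tau_{2^{k-1}-1,2^{k-1}}$, but the two choices are interchangeable).
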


 \begin{lemma}\label{ordG_k} Sizes of groups $G_k = \langle   \mathop{\alpha}_{0}, \mathop{\alpha}_{1},
 \mathop{\alpha}_{2},...,\mathop{\alpha}_{k-2}, \mathop{\tau} \rangle $
and $Syl_2(A_{2^{k}})$ are equal to $2^{2^{k}-2}$.
\end{lemma}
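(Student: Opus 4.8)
The plan is to compute $|G_k|$ and $|Syl_2(A_{2^k})|$ independently, check that the two numbers agree, and then embed $G_k$ into $A_{2^k}$ so that the equality of orders forces $G_k$ to be a Sylow $2$-subgroup.

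First I would invoke Lemma \ref{gen}, which identifies $\langle\alpha_0,\dots,\alpha_{k-2},\tau\rangle$ with $G_k$, together with the defining decomposition $G_k\simeq B_{k-1}\ltimes W_{k-1}$. By Corollary \ref{B_k-1} we have $|B_{k-1}|=2^{2^{k-1}-1}$ and by Proposition \ref{ordW} we have $|W_{k-1}|=2^{2^{k-1}-1}$. Since the order of a semidirect product is the product of the orders of its factors,
\[
|G_k|=|B_{k-1}|\cdot|W_{k-1}|=2^{2^{k-1}-1}\cdot 2^{2^{k-1}-1}=2^{2(2^{k-1}-1)}=2^{2^k-2}.
\]

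Next I would determine the $2$-part of $|A_{2^k}|$. Writing $|A_{2^k}|=(2^k)!/2$ and applying Legendre's formula, the $2$-adic valuation of the factorial is $v_2((2^k)!)=\sum_{i\geq 1}\lfloor 2^k/2^i\rfloor=2^{k-1}+2^{k-2}+\cdots+1=2^k-1$; dividing by $2$ lowers the exponent by one, giving $|Syl_2(A_{2^k})|=2^{2^k-2}$, which matches $|G_k|$.

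Finally I would verify that $G_k\leq A_{2^k}$, i.e. that every element acts by an even permutation on the leaf level $X^k$. The factor $B_{k-1}$ has active vertex permutations only on levels $l<k-1$, so by Lemma \ref{even} it acts evenly on $X^k$; the factor $W_{k-1}$ consists by construction of automorphisms with an even number of active states on $X^{k-1}$, and since each single level-$(k-1)$ transposition induces exactly one transposition of leaves, an even number of them again produces an even permutation. Hence all of $G_k=B_{k-1}\ltimes W_{k-1}$ lies in $A_{2^k}$, and being a $2$-group whose order equals the $2$-part of $|A_{2^k}|$, it is a Sylow $2$-subgroup, so $G_k=Syl_2(A_{2^k})$. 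The hard part will be the embedding step: one must argue carefully that the generator $\tau$, which is a single leaf-transposition and therefore odd on its own, only ever occurs inside $G_k$ in the even combinations prescribed by the parity condition defining $W_{k-1}$, so that no odd permutation of $X^k$ is produced. Once this is settled the order computations are routine consequences of Corollary \ref{B_k-1}, Proposition \ref{ordW} and Legendre's formula.
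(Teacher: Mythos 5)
Your proposal is correct and follows essentially the same route as the paper: both compute $|G_k|=|B_{k-1}|\cdot|W_{k-1}|=2^{2^{k-1}-1}\cdot 2^{2^{k-1}-1}=2^{2^k-2}$ from Corollary \ref{B_k-1} and Proposition \ref{ordW}, and both obtain $|Syl_2(A_{2^k})|=2^{2^k-2}$ from Legendre's formula with the subtraction of $1$ accounting for the index-two passage from $S_{2^k}$ to $A_{2^k}$. One correction to your final paragraph: $\tau$ is not a single leaf-transposition; by definition $\tau=\tau_{1,2^{k-1}}$ has active vertex permutations at the two vertices $v_{k-1,1}$ and $v_{k-1,2^{k-1}}$, hence acts on $X^k$ as a product of two disjoint transpositions and is already even, so the ``hard part'' you anticipate dissolves immediately. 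Note also that identifying $G_k$ with an actual Sylow $2$-subgroup of $A_{2^k}$ is not part of this lemma's claim --- the paper defers that to Theorem 1 and Theorem \ref{isomor} --- so your embedding step, while sound once $\tau$ is read correctly, is extra work beyond what the statement requires.
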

 \begin{proof}
In accordance with Legendre's formula, the power of 2 in ${{2}^{k}}!$ is $\left[ \frac{{{2}^{k}}}{2} \right]+\left[ \frac{{{2}^{k}}}{{{2}^{2}}} \right]+\left[ \frac{{{2}^{k}}}{{{2}^{3}}} \right]+...+\left[ \frac{{{2}^{k}}}{{{2}^{k}}} \right]=\frac{{{2}^{k}}-1}{2-1}$. We need to subtract 1 from it because we have only $\frac{n!} {2}$ of all permutations as a result: $\frac{{{2}^{k}}-1}{2-1}-1=2^{k}-2$. So $\left| Syl({{A}_{{{2}^{k}}}}) \right|={{2}^{{{2}^{k}}-2}}$.
The same size has group $G_k=B_{k-1} \ltimes W_{k-1}$ and $|G_k|=|B_{k-1}|\cdot|W_{k-1}|= |Syl_2 A_{2^k}|$. Since size of groups $G_{k}$ according to Proposition \ref{ordW} and the fact that $|B_{k-1}|=2^{2^{k-1}-1}$ is $2^{2^{k}-2}$.
For instance the sizes of $Syl_2 (A_8)$, $B_{3-1}$ and $W_{3-1}$ are such $|W_{3-1}|= 2^{2^{3-1}-1}=2^3=8$, $|B_{3-1}|=|C_2\wr C_2| = 2 \cdot 2^2=2^3$ and according to Legendre's formula, the power of 2 in ${{2}^{k}}!$ is $\frac{{2}^{3}}{2}+ \frac{{2}^{3}}{2^2}+\frac{{2}^{3}}{2^3} -1=6$ so $Syl_2 (A_8) = 2^6=2^{2^k-2}$, where $k=3$. Next example for $A_{16}$: $Syl_2 (A_{16}) =2^{2^4-2}= 2^{14}, k=4$, $|W_{4-1}|= 2^{2^{4-1}-1}=2^7$, $|B_{4-1}|=|C_2\wr C_2\wr C_2| = 2 \cdot 2^2\cdot 2^4 = 2^7$. So we have the $|A_{16}|=|W_{3}||B_{3}|$ equality which endorse the condition of this lemma. 
 \end{proof}

\begin{theorem}
\textbf{A maximal 2-subgroup}  of $Aut{{X}^{\left[ k \right]}}$ that acts by even permutations on ${{X}^{k}}$ has the structure of the semidirect product $G_k \simeq  B_{k-1} \ltimes W_{k-1} $ and isomorphic to $Syl_2A_{2^k}$.
\end{theorem}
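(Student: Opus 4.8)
The plan is to realize the group in question as the kernel of the sign homomorphism restricted to $Aut X^{[k]}$, and then to read off both its order and its internal structure from the level-$(k-1)$ decomposition already prepared in the preceding lemmas.

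First I would record that $Aut X^{[k]}\simeq C_2\wr\cdots\wr C_2$ ($k$ factors) is a $2$-group of order $2^{2^k-1}$, and is in fact a Sylow $2$-subgroup of $S_{2^k}$. The crucial input is the parity of an automorphism $g$ viewed as a permutation of the $2^k$ leaves $X^k$. By Lemma \ref{even} every vertex permutation located on a level $l<k-1$ contributes an even permutation of $X^k$, whereas a single non-trivial vertex permutation at a vertex of level $k-1$ is a transposition of two leaves, hence odd. Consequently the sign homomorphism $\mathrm{sgn}\colon Aut X^{[k]}\to\{\pm1\}$ sends $g$ to $(-1)^{N(g)}$, where $N(g)$ is the number of active states of $g$ on level $X^{k-1}$. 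Since $N$ can be odd (take a single bottom transposition), $\mathrm{sgn}$ is onto, so the subgroup $H$ of even automorphisms has index $2$ and order $2^{2^k-1}/2=2^{2^k-2}$. By definition $H$ is exactly the set of automorphisms with an even number of active states on $X^{k-1}$, and it is the largest $2$-subgroup of the $2$-group $Aut X^{[k]}$ acting evenly, since any evenly acting subgroup lies in $\ker(\mathrm{sgn})=H$.

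Next I would exhibit the semidirect structure of $H$. Consider the truncation homomorphism $\pi\colon Aut X^{[k]}\to Aut X^{[k-1]}$ that forgets the vertex permutations on level $k-1$; its kernel is the pointwise level stabilizer $St(k-1)\simeq(C_2)^{2^{k-1}}$. Restricting $\pi$ to $H$, the kernel is precisely $H\cap St(k-1)=W_{k-1}$, which by Proposition \ref{ordW} is elementary abelian of rank $2^{k-1}-1$ and normal in $H$. For surjectivity I would lift any $b\in Aut X^{[k-1]}=B_{k-1}$ to the automorphism of $X^{[k]}$ having the same top part and trivial vertex permutations on level $k-1$; by Lemma \ref{even} this lift is even, so it lies in $H$ and maps onto $b$. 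Hence $\pi(H)=B_{k-1}$ (Corollary \ref{B_k-1}), and the lift provides a complement, so the extension $1\to W_{k-1}\to H\to B_{k-1}\to 1$ splits and $H\simeq B_{k-1}\ltimes W_{k-1}=G_k$, with $B_{k-1}$ acting on the coordinates of $W_{k-1}$ through its permutation action on the $2^{k-1}$ vertices of level $k-1$. Finally I would close the order bookkeeping: $|G_k|=|B_{k-1}|\cdot|W_{k-1}|=2^{2^{k-1}-1}\cdot 2^{2^{k-1}-1}=2^{2^k-2}$, which by Lemma \ref{ordG_k} equals $|Syl_2A_{2^k}|$; since $G_k=H\le A_{2^k}$ is a $2$-subgroup realizing the full $2$-part of $|A_{2^k}|$, it is a Sylow $2$-subgroup, giving $G_k\simeq Syl_2A_{2^k}$.

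The main obstacle I anticipate is not the order count but the structural claim: one must check carefully that $W_{k-1}$ is genuinely normal in $H$ (not merely in $St(k-1)$), that the set of top parts realizable inside $H$ is all of $B_{k-1}$ rather than an index-$2$ subgroup forced by the parity constraint, and that the chosen lift is an honest homomorphic section, so that the product is semidirect rather than a non-split extension. The parity computation of Lemma \ref{even} is exactly what resolves the second point — because evenness is controlled solely by level $k-1$, the top part is completely unconstrained — and this is the hinge on which the whole decomposition turns.
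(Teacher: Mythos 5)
Your proposal is correct, and it reaches the paper's decomposition by a genuinely different route. The paper argues internally and bottom-up: it takes the two subgroups $B_{k-1}$ and $W_{k-1}$ as already embedded in $Aut X^{[k]}$, lets $B_{k-1}$ act on $W_{k-1}$ by conjugation, justifies normality of $W_{k-1}$ by the fact that conjugation preserves cycle structure (so even permutations map to even ones), multiplies the two orders from Corollary \ref{B_k-1} and Proposition \ref{ordW} to get $2^{2^k-2}$, and then concludes $G_k\simeq Syl_2A_{2^k}$ by asserting maximality. You instead work top-down and externally: you characterize the object in question at once as $H=\ker(\mathrm{sgn})$, using Lemma \ref{even} to show that parity on $X^k$ is governed solely by the number of active vertex permutations on level $k-1$, and then decompose $H$ via the split exact sequence $1\to W_{k-1}\to H\to B_{k-1}\to 1$ coming from truncation to $X^{[k-1]}$, with the trivial-bottom lift as a homomorphic section. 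Your route buys exactly the points the paper leaves implicit or asserts: maximality is automatic (any evenly acting subgroup lies in the kernel of the sign map), normality of $W_{k-1}$ in $H$ is automatic (it is the kernel of $\pi|_H$ rather than needing a cycle-structure argument), and the surjectivity of $\pi|_H$ onto all of $B_{k-1}$ — i.e.\ that the parity constraint does not cut the top down to an index-$2$ subgroup — is proved rather than taken for granted. What the paper's internal construction buys in exchange is explicitness: it names the two subgroups and the concrete conjugation action on the coordinates of $W_{k-1}$, which is the form in which the structure $\underset{i=1}{\overset{k-1}{\mathop{\wr }}}\,C_2 \ltimes \prod_{i=1}^{2^{k-1}-1}C_2$ is quoted and used in the rest of the paper.
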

\begin{proof}
A maximal 2-subgroup of  $Aut{{X}^{\left[ k-1 \right]}}$ is isomorphic to ${{B}_{k-1}}\simeq \underbrace{{{C}_{2}}\wr {{C}_{2}}\wr ...\wr {{C}_{2}}}_{k-1}$ (this group acts on ${{X}^{k-1}}$). A maximal 2-subgroup which has elements with active states only on ${{X}^{k-1}}$ corresponds subgroup ${W}_{k-1}$.
Since subgroups ${B}_{k-1}$ and ${{W}_{k-1}}$ are embedded in $Aut{{X}^{\left[ k \right]}}$, then define an action of ${{B}_{k-1}}$ on elements of ${{W}_{k-1}}$ as ${{\tau }^{\sigma }}=\sigma \tau {{\sigma }^{-1}},\,\,\,\sigma \in {{B}_{k-1}},\,\,\tau \in {{W}_{k-1}}$,
i.e. action by inner automorphism (inner action) from $Aut{{X}^{\left[ k \right]}}$.
Note that ${{W}_{k-1}}$ is subgroup of stabilizer of  ${{X}^{k-1}}$ i.e. ${{W}_{k-1}}<St_{Aut{X}^{[k]}}(k-1)\lhd AutX^{[k]}$ and is normal too $W_{k-1}\lhd AutX^{[k]}$, because conjugation keeps a cyclic structure of permutation so even permutation maps in even. Therefore such conjugation induce automorphism of ${W}_{k-1}$ and $G_k \simeq B_{k-1}\ltimes W_{k-1}$. Since  at ${{X}^{k-1}}$ is ${{2}^{{{2}^{k-1}}}}$ vertexes and half of combinations of active states from  ${X}^{k-1}$ can form even permutation thus $\left| {{W}_{k-1}} \right|={{2}^{{{2}^{k-1}}-1}}$ that is proved in Proposition \ref{ordW}. Using the Corollary \ref{B_k-1} about ${{B}_{k-1}}$ we get size of  ${{G}_{k}}\simeq {{B}_{k-1}} \ltimes {{W}_{k-1}}$ is ${{2}^{{{2}^{k-1}}-1}}\cdot {{2}^{{{2}^{k-1}}-1}}={{2}^{{{2}^{k}}-2}}$.
 Since $G_k$ is the maximal 2-subgroup then $G_k \simeq Syl_2A_{2^k}$.
\end{proof}

 \begin{theorem} \label{isomor}  
  The set $S_{\mathop{\alpha}}= \{\mathop{\alpha}_{0}, \mathop{\alpha}_{1},
 \mathop{\alpha}_{2},  ... ,\mathop{\alpha}_{k-2}, \mathop{\tau}\}$
   of elements from subgroup of $AutX^{[k]}$
   generates a group $G_k$ which isomorphic to $Syl_2(A_{2^{k}})$.
\end{theorem}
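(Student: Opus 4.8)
The plan is to assemble the structural and counting facts already established and verify two inclusions. First I would pin down $B_{k-1}$ inside $G_k$. By Corollary \ref{B_k-1} the subset $\{\alpha_0,\ldots,\alpha_{k-2}\}$ generates $Aut X^{[k-1]} = B_{k-1}$, and since each $\alpha_l$ with $l\le k-2$ has active vertex permutations only on levels strictly below $k-1$, Lemma \ref{even} guarantees that every element of $B_{k-1}$ acts by an even permutation on $X^{k}$; thus $B_{k-1}$ embeds into $G_k$. The remaining generator $\tau=\tau_{1,2^{k-1}}$ has exactly two active states on $X^{k-1}$, so it too acts evenly and lies in $W_{k-1}$.

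Next I would produce all of $W_{k-1}$ from these generators. By Lemma \ref{gen} together with Corollary \ref{genG_k-1}, conjugating $\tau$ by suitable words in $\alpha_0,\ldots,\alpha_{k-2}$ yields every transposition $\tau_{ij}$ whose active states sit at a pair of vertices $v_{k-1,i},v_{k-1,j}$ of level $k-1$; regarding $W_{k-1}$, via Proposition \ref{ordW}, as the $\mathbb{F}_2$-vector space $(C_2)^{2^{k-1}-1}$ of even-weight state configurations, these $\tau_{ij}$ furnish a spanning set. Hence $\langle S_{\alpha}\rangle$ contains both $B_{k-1}$ and $W_{k-1}$. Because $W_{k-1}\lhd Aut X^{[k]}$ and $G_k = B_{k-1}\ltimes W_{k-1}$ (the structure theorem proved above), this forces $\langle S_{\alpha}\rangle \supseteq G_k$; the reverse inclusion is immediate since every element of $S_{\alpha}$ already lies in $G_k$. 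Therefore $\langle S_{\alpha}\rangle = G_k$.

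Finally, the isomorphism $G_k\simeq Syl_2(A_{2^{k}})$ is delivered by the structure theorem above, and the order equality $|G_k| = 2^{2^{k}-2} = |Syl_2(A_{2^{k}})|$ supplied by Lemma \ref{ordG_k} confirms that $G_k$ is a \emph{full} Sylow $2$-subgroup rather than a proper even subgroup, so no index argument is left open.

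I expect the genuine obstacle to be the spanning claim of the second paragraph: showing that the conjugates of the single element $\tau$ exhaust a basis of $W_{k-1}$. This is precisely where the transitivity of $\langle \alpha_1,\ldots,\alpha_{k-2}\rangle$ on $X^{k-1}$ inside one subtree, the use of $\alpha_0$ to interchange the two halves of the tree, and the binary-expansion bookkeeping of Lemma \ref{gen} are indispensable. Once every $\tau_{ij}$ is available, identifying $W_{k-1}$ with the even-weight subspace and re-assembling the semidirect product are routine, so the weight of the argument rests entirely on the constructive generation lemma rather than on any new computation here.
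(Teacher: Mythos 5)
Your proposal is correct and follows essentially the same route as the paper: both arguments assemble Corollary \ref{B_k-1}, Lemma \ref{gen}, and Corollary \ref{genG_k-1} to show $\langle S_{\alpha}\rangle$ contains $B_{k-1}$ and $W_{k-1}$ (hence all of $G_k$), and then use the order count of Lemma \ref{ordG_k} to identify $G_k$ with a full Sylow $2$-subgroup of $A_{2^k}$. The only cosmetic difference is that the paper closes by invoking Sylow's theorems directly, while you route the final identification through the structure theorem $G_k\simeq B_{k-1}\ltimes W_{k-1}\simeq Syl_2A_{2^k}$; these amount to the same maximality-of-order argument.
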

 \begin{proof}
As we see from Corollary \ref{B_k-1}, Lemma \ref{gen} and Corollary \ref{genG_k-1} group $G_k$ are generated by $S_{\mathop{\alpha}}$ and their sizes according to Lemma \ref{ordG_k} are equal. So according to Sylow's theorems 2-subgroup $G_k<A_{2^k}$ is $Syl_2 (A_{2^k})$.
 \end{proof}
 Consequently, we construct a generating set, which contains $k$ elements, that is less than in \cite{Iv}.

The structure of Sylow 2-subgroup of $A_{2^k}$ is the following: $\underset{i=1}{\overset{k-1}{\mathop{\wr }}}\,{{C}_{2}}  \ltimes \prod_{i=1}^{2^{k-1}-1} C_2  $, where we take $C_2$ as group of action on two elements and this action is faithful. It adjusts with construction of normalizer for $Syl_p(S_n)$ from \cite{Weisner}, where it was said that $Syl_2(A_{2^l})$ is self-normalized  in $S_{2^l}$.


\begin {definition}
Let us call the index of automorphism $\beta$ on $X^l$ as a number of active v.p. of $\beta$ on $X^l$.
\end {definition}

 \begin {definition} Define an \textbf{element of type }\texttt{T} as an automorphism ${{\tau }_{{{i}_{0}},...,{{i}_{{{2}^{k-1}}}};{{j}_{{{2}^{k-1}}}},...,{{j}_{{2}^k}}}}$, that has even index at ${{X}^{k-1}}$
and it has exactly $m_1$ active states, $ m_1\equiv 1 (mod 2)$, in vertexes ${{v}_{k-1,j}}, $ with number $1 \leq j \leq  2^{k-2}$ and $m_2, \, m_2\equiv 1 (mod 2)$ active states in vertices ${{v}_{k-1,l}}$, ${2^{k-2}} < l \leq {2^{k-1}}$. Set of such elements is denoted by \texttt{T}.
\end {definition}


\begin {definition} A combined element is such an automorphism $ {{\beta }_{1,{{i}_{1}};2,{{i}_{2}};...;k-1,{{i}_{k-1}}; \tilde{\tau }}}$, that it's restriction
${{\beta}_{1,{{i}_{1}};2,{{i}_{2}};...;k-1,{{i}_{k-1}};\tilde{\tau }}}\left|_{{{X}_{k-1}}} \right.$ coincide with one of elements that can be generated by ${{S}_{\alpha}}$ and $Rist_{<{{\beta}_{1,{{i}_{1}};2,{{i}_{2}};...;k-1,{{i}_{k-1}}; \tilde{ \tau} }}>}(k-1)  =\left\langle \tau' \right\rangle $ \cite{Ne} where $\tau' \in $\texttt{T}. Set of such elements is denoted by \texttt{C}.
\end {definition}
In other word elements $g \in$\texttt{C}  on level ${{X}^{k-1}}$ have such structure as element and generator of type \texttt{T}. As well ${{\tau }_{{{i}_{0}},...,{{i}_{{{2}^{k-1}}}};{{j}_{{{2}^{k-1}}}},...,{{j}_{{{2}^{k}}}}}}\in S{{t}_{Aut{{X}^{k}}}}(k-1)$.

The minimum size of a generating set  $S$
of G we denote by rk$G$ and call the rank of $G$ \cite{Bog}.
By distance between vertices we shall understand usual distance at graph between its vertexes.
By distance of automorphism $g$ (element) we shall understand maximal distance between two vertexes with active states of $g$.

\begin{lemma} \label{Lemma about keeping of distance} A vertices permutations on ${X}^{k}$ that has distance ${{d}_{0}}$ can not be generated by vertex permutations with distance ${{d}_{1}}$ such that \, ${{d}_{1}}<{{d}_{0}}$.
\end{lemma}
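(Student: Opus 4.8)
The plan is to reduce the statement to a parity argument carried out in the abelian picture on level $X^{k-1}$, where the relevant vertex permutations live and where the group operation is the symmetric difference of the supports (the sets of active vertices). First I would fix two active vertices $u$ and $w$ of the given element $g$ that realise its distance, so $d(u,w)=d_0$, and let $c$ be their common ancestor. Since $u,w$ sit on $X^{k-1}$ and $d(u,w)=d_0$, the vertex $c$ lies on level $k-1-\frac{d_0}{2}$, and $u,w$ belong to the two distinct subtrees $L$ and $R$ hanging at the two children of $c$. Write $O$ for the part of $X^{k-1}$ lying outside the subtree rooted at $c$.

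Next I would record the key geometric fact that controls low-distance generators: any automorphism $h$ of distance $d_1<d_0$ has all its active vertices within pairwise distance strictly less than $d_0$, so its support cannot meet both $L$ and $R$ (any vertex of $L$ and any vertex of $R$ have common ancestor exactly $c$, hence are at distance precisely $d_0$), and likewise cannot meet both $L$ and $O$ (their common ancestor is a strict ancestor of $c$, giving distance $>d_0$). Consequently the support of each such $h$ is contained entirely in one of the three regions $L$, $R$, $O$.

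Then I would introduce the $\mathbb{F}_2$-linear functional $\varphi$ sending a support $S\subseteq X^{k-1}$ to the parity $|S\cap L|\bmod 2$. Because every admissible low-distance generator is an even permutation confined to a single region, $\varphi$ annihilates it: if it sits inside $L$ its support is even, hence $|S\cap L|$ is even, while if it sits inside $R$ or $O$ it misses $L$ altogether. Since inside $W_{k-1}$ the product corresponds to the symmetric difference of supports, $\varphi$ is a homomorphism onto $C_2$, so it vanishes on every product of distance-$<d_0$ generators. On the other hand $g$ contains an odd number of active vertices in $L$ — exactly the single vertex $u$ when $g$ is a transposition-type element, or the odd number $m_1$ when $g$ is an element of type \texttt{T} — so $\varphi(g)=1$. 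This contradiction shows $g$ cannot be expressed through vertex permutations of distance $d_1<d_0$.

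The main obstacle is guaranteeing that each low-distance generator contributes an even intersection with $L$, and this is precisely where the even-permutation constraint (membership in $Syl_2 A_{2^k}$, equivalently confinement to $W_{k-1}$) is indispensable: without it a lone vertex permutation would realise $\{u,w\}=\{u\}\,\triangle\,\{w\}$ and the claim would collapse, exactly as in the situation flagged in the introduction where $C_2\wr\dots\wr C_2$ admits odd permutations. A secondary point worth noting is that conjugation by higher-level automorphisms is distance-preserving, as tree automorphisms act by isometries, so reducing to the symmetric-difference description on $X^{k-1}$ loses nothing essential and the functional $\varphi$ captures the invariant that obstructs lowering the distance.
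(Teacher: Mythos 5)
Your proof is correct and takes a genuinely different route from the paper's. The paper argues that (i) conjugation by tree automorphisms preserves distance, and (ii) multiplication of portraits of automorphisms of distance at most $d_1$ produces only elements of distance at most $d_1$; claim (ii), which carries all the weight, is false as a general principle, since supports combine by symmetric difference and the union of two small-diameter supports can have large diameter: already $\tau_{1,2}\cdot\tau_{3,4}=\tau_{1,2,3,4}$ turns two distance-$2$ elements of $W_{k-1}$ into a distance-$4$ element. (The same example shows that the lemma read literally, for an arbitrary element of distance $d_0$, is false; it holds precisely for elements whose support meets your subtree $L$ in an odd number of vertices, which covers the two cases the paper ever invokes it for: a $\tau_{ij}$ realizing the distance, and elements of type \texttt{T}.) Your functional $\varphi(S)=|S\cap L|\bmod 2$ is exactly the invariant that makes the usable statement provable: it is an $\mathbb{F}_2$-homomorphism on the elementary abelian level group, it annihilates every even generator of distance less than $d_0$ because such a generator is either confined to $L$ (even intersection, by evenness) or disjoint from $L$, and it takes the value $1$ on the target elements. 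Your route thus replaces the paper's unsound monotonicity claim by a sound obstruction, and it makes explicit the hypothesis --- evenness of the generators --- without which the statement collapses to the trivial counterexample $\{u\}\triangle\{w\}$; the paper leaves both points implicit. The one adjustment to make is to state the conclusion in the form you actually prove it: for elements whose support has odd intersection with $L$ (equivalently, for $\tau_{ij}$ realizing the distance and for type-\texttt{T} elements), not for arbitrary elements of distance $d_0$.
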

\begin{proof}
 The element ${{\tau }_{ij}}$ with distance $\rho ({{\tau }_{ij}})={{d}_{0}},\,\,\,{{d}_{0}}<{{d}_{1}}$ can be mapped by automorphic mapping only in automorphism with distance ${{d}_{0}}$ because automorphic mapping keep incidence relation and so it possess property of isometry. Also multiplication of portrait (labeled graph) of automorphism ${{\tau }_{ij}}:\,\,\,\,\,\rho ({{\tau }_{ij}})={{d}_{1}}$ give us portrait of element with distance no greater than ${{d}_{1}}$, it follows from properties of group operation. For instance $\tau_{1i} \tau_{1j}=\tau_{ii}$, where $i, j > 2^{k-2}$ $\rho ({{\tau }_{1i}})=\rho ({{\tau }_{1j}})=2k-2$ but $\rho ({{\tau }_{ij}})< 2k-2$.
\end{proof}

\begin{lemma} \label{about transposition} An arbitrary automorphism $\tau{'} \in$\texttt{T} (or in particular $\tau $) can be generated only with using odd number of automorphisms from \texttt{C} or \texttt{T}.
\end{lemma}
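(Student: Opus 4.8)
The plan is to isolate a $C_2$-valued quantity that counts, modulo $2$, how many factors of type \texttt{C} or \texttt{T} occur in any word representing $\tau'$. First I would attach to every automorphism $g$ two parities: the left index $L(g)$, equal to the number modulo $2$ of active v.p. of $g$ among the vertices $v_{k-1,1},\dots,v_{k-1,2^{k-2}}$, and the right index $R(g)$, the analogous count over $v_{k-1,2^{k-2}+1},\dots,v_{k-1,2^{k-1}}$. The single observation driving the argument is that, by definition, every element of \texttt{T} has odd index in each half, and every combined element has the same $X^{k-1}$-portrait as some $\tau''\in\texttt{T}$; hence $L(g)\equiv R(g)\equiv 1\pmod 2$ for all $g\in\texttt{C}\cup\texttt{T}$. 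By contrast the remaining generators $\alpha_0,\dots,\alpha_{k-2}$ lie in $B_{k-1}$ and carry no active v.p. on $X^{k-1}$, so $L(\alpha_i)\equiv R(\alpha_i)\equiv 0$.

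Next I would track $L$ along a product $w=g_1\cdots g_n=\tau'$. Writing each factor in wreath-recursion form $g_i=\gamma_i t_i$, with $\gamma_i\in AutX^{[k-1]}$ permuting the vertices of $X^{k-1}$ and $t_i\in(C_2)^{2^{k-1}}$ the tuple of its $X^{k-1}$-states, and pushing all the $\gamma_i$ to the left, the $X^{k-1}$-state tuple of $w$ becomes $t(w)=\sum_{i=1}^{n}t_i^{\delta_i}$, where $\delta_i=\gamma_{i+1}\cdots\gamma_n$ and $t_i^{\delta_i}$ denotes $t_i$ with its coordinates permuted by $\delta_i$. Since $\delta_i$ is a tree automorphism it either fixes or interchanges the two halves of $X^{k-1}$, and which alternative occurs is determined by the parity of the number of root-active factors among $g_{i+1},\dots,g_n$. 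Summing the left-half coordinates of $t(w)$ then shows that factor $i$ contributes $L(t_i)$ when $\delta_i$ preserves the halves and $R(t_i)$ when it swaps them.

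The step I expect to be the genuine obstacle is exactly this swapping: an automorphism active at the root (such as $\alpha_0$) interchanges the halves, so neither $L$ nor $R$ is by itself a homomorphism; in fact no homomorphism $G_k\to C_2$ can detect \texttt{T}, since $\tau'$ has trivial image under every level parity and so lies in the kernel of the abelianisation. The resolution is the observation of the first step: for a factor $g_i\in\texttt{C}\cup\texttt{T}$ one has $L(t_i)=R(t_i)=1$, so its contribution to the left-half sum equals $1$ whatever $\delta_i$ does, while each $B_{k-1}$-factor contributes $0$ in either case. Thus the swapping ambiguity cancels and $L(w)\equiv\#\{\,i:g_i\in\texttt{C}\cup\texttt{T}\,\}\pmod 2$. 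As $w=\tau'$ is of type \texttt{T} we have $L(\tau')=1$, forcing the number of factors taken from \texttt{C} or \texttt{T} to be odd, which is the assertion.
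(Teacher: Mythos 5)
Your proof is correct, and its engine is the same invariant the paper itself uses: the parities of the number of active v.p.\ on the two halves $\{v_{k-1,1},\dots,v_{k-1,2^{k-2}}\}$ and $\{v_{k-1,2^{k-2}+1},\dots,v_{k-1,2^{k-1}}\}$ of $X^{k-1}$. Where you genuinely differ is in execution. The paper argues informally: it invokes the distance Lemma~\ref{Lemma about keeping of distance}, decomposes a combined element as a product $\tau\beta$, and then asserts that a product of an even number of elements of type \texttt{T} has an even number of active states on the left half; the companion Lemma~\ref{About not closed set of element of type T} likewise asserts that the half-counts simply add modulo $2$. Neither place squarely confronts the fact that a factor which is root-active interchanges the subtrees $v_{1,1}X^{[k-1]}$ and $v_{1,2}X^{[k-1]}$ and hence exchanges the two half-counts of everything standing to its left. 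Your wreath-recursion bookkeeping $t(w)=\sum_i t_i^{\delta_i}$ isolates exactly this issue and dissolves it by the observation that every admissible factor has equal left and right parities ($1,1$ for factors of type \texttt{C} or \texttt{T}, $0,0$ otherwise), so its contribution to $L(w)$ does not depend on whether $\delta_i$ swaps the halves. In effect you prove that $L$, restricted to $G_k$, is a homomorphism onto $C_2$ whose preimage of $1$ is exactly the set of elements of types \texttt{C} and \texttt{T}; this is a cleaner and more complete route to the same count.

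Two points need repair, though neither is fatal. First, your parenthetical claim that no homomorphism $G_k\to C_2$ can detect \texttt{T} is false, and your own computation refutes it: every element of $G_k$ has its level-$(k-1)$ part in $W_{k-1}$, hence even total index on $X^{k-1}$, hence $L=R$; therefore the swapping never matters inside $G_k$ and $L:G_k\to C_2$ is itself a homomorphism with $L(\tau')=1$. It coincides with the map $\phi_{k-1}$ of Corollary~\ref{qoutient}, and since $C_2$ is abelian, $G_k'\le\ker L$ while $L(\tau')=1$, so $\tau'$ is \emph{not} in the kernel of the abelianisation --- that sentence should simply be deleted, at no cost to the argument. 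Second, you allow the non-\texttt{C}, non-\texttt{T} factors to come only from $B_{k-1}$, whereas the lemma is applied in the paper to words in arbitrary elements of $G_k$ (e.g.\ a product of an $\alpha_i$ with an even element of $W_{k-1}$ is neither of type \texttt{C} nor in $B_{k-1}$). The fix is one line: any $g\in G_k$ not of type \texttt{C} or \texttt{T} has even total index on $X^{k-1}$ and is not odd--odd, hence is even--even, so $L(g)=R(g)=0$ and it also contributes $0$ to $L(w)$ regardless of swapping.
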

\begin{proof}
Let us assume that there is no such element ${{\tau }_{ij}}$ which has distance $2k-2$ then accord to Lemma \ref{Lemma about keeping of distance} it is imposable to generate are pair of transpositions $\tau{'} $ with distance $\rho ({{\tau }_{ij}})=2k$ since such transpositions can be generated only by ${{\tau }_{ij}}$ that described in the conditions of this Lemma: $i\le {{2}^{k-2}},\,\,\,j>{{2}^{k-2}}$. Combined element can be decomposed in product $ \tau \dot {\beta }_{{i}_{l}} = {{\beta }_{{{i}_{l}};\tau }} $ so we can express by using $\tau$ or using a product where odd number an elements from \texttt{T} or \texttt{C}. If we consider product $P$ of even number elements from \texttt{T} then automorphism $P$ has even number of active states in vertexes ${{v}_{k-1,i}}$ with number $ i \leq  2^{k-2}$ so $P$ does not satisfy the definition of generator of type \texttt{T}.
\end{proof}
\begin{corollary} \label{About generating distance} Any element of type \texttt{T} can not be generated by ${{\tau }_{ij}}\in Aut {{v}_{1,1}}{{X}^{[k-1]}}$ and ${{\tau }_{ml}}\in Aut {{v}_{1,2}}{{X}^{[k-1]}}$. The same corollary is true for a combined element.
\end{corollary}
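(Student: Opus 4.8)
The plan is to derive the corollary from the half-index parity obstruction of Lemma~\ref{about transposition} rather than from distance alone. First I would record the decisive structural observation: each admissible generator lives entirely inside one principal subtree. An element $\tau_{ij}\in Aut\,v_{1,1}X^{[k-1]}$ has both of its active vertices $v_{k-1,i},v_{k-1,j}$ in the left half $\{v_{k-1,s}:s\le 2^{k-2}\}$, while $\tau_{ml}\in Aut\,v_{1,2}X^{[k-1]}$ has both active vertices in the right half. Hence each such generator carries an \emph{even} number (two) of active states in its own half and \emph{none} in the other half; in particular neither $\tau_{ij}$ nor $\tau_{ml}$ is an element of type \texttt{T} or \texttt{C}, since by definition those require an odd index in \emph{each} half.

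Next I would set up the parity invariant. All the listed generators are trivial below level $X^{k-1}$, so they fix the tree down to that level and act only by independent transpositions of leaves; they therefore all lie in the abelian group $(C_2)^{2^{k-1}}$ of vertex permutations on $X^{k-1}$ already met in Proposition~\ref{ordW}, which I identify with the space $\mathbb{F}_2^{2^{k-1}}$ by letting an active state at $v_{k-1,s}$ correspond to the basis vector $e_s$. Writing $\lambda(g)$ for the number of active states of $g$ in the left half reduced modulo $2$, the identity $\operatorname{wt}(u+v)\equiv\operatorname{wt}(u)+\operatorname{wt}(v)\pmod 2$ shows that $\lambda$ is a homomorphism onto $C_2$. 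Every generator $\tau_{ij}=e_i+e_j$ with $i,j\le 2^{k-2}$ satisfies $\lambda(\tau_{ij})=0$, and each $\tau_{ml}$ trivially satisfies $\lambda(\tau_{ml})=0$, so the whole subgroup they generate lies in $\ker\lambda$.

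The conclusion is then immediate. An element of type \texttt{T} has, by definition, $m_1\equiv 1\pmod 2$ active states in the left half, hence $\lambda=1$ and it cannot lie in $\ker\lambda$. Equivalently, by Lemma~\ref{about transposition} its production needs an \emph{odd} number of factors from \texttt{T}$\cup$\texttt{C}, whereas a word in $\tau_{ij},\tau_{ml}$ uses none of them. For a combined element the same half-parity obstruction applies on $X^{k-1}$ (it too has odd index in each half there); and if in addition its $\beta$-part is nontrivial it carries active states \emph{below} level $X^{k-1}$, which a product of generators that are all trivial below $X^{k-1}$ can never reproduce. Either way the element is not generated.

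The step I expect to be the genuine obstacle is resisting a purely distance-based argument: Lemma~\ref{Lemma about keeping of distance} by itself does \emph{not} close the case here, because a product such as $\tau_{ij}\tau_{ml}$ with $i,j$ in the left half and $m,l$ in the right half does acquire active vertices on both sides of the root and thus attains the maximal distance $2k-2$, exactly the distance realised by type \texttt{T}. The real, cancellation-stable invariant is the half-index parity $\lambda$, which is a group homomorphism and hence insensitive to such cross-subtree interactions; isolating and invoking this, through Lemma~\ref{about transposition}, is the heart of the argument.
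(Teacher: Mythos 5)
Your proof is correct, and it takes a genuinely different route from the paper. The paper's own proof is purely distance-based: it notes that any $\tau_{ij}\in Aut\, v_{1,1}X^{[k-1]}$ has distance at most $2k-4<2k-2$ and then invokes Lemma~\ref{Lemma about keeping of distance} to conclude that elements of type \texttt{T} (which have distance $2k-2$) cannot be generated; the combined case is dismissed in one sentence. Your argument replaces the distance invariant by the half-index parity: all admissible generators lie in the elementary abelian group of level-$(k-1)$ vertex permutations, the count of active states in the left half modulo $2$ is a homomorphism onto $C_2$ whose kernel contains every $\tau_{ij}$ and every $\tau_{ml}$, while every element of type \texttt{T} maps to $1$; equivalently, a word containing zero factors from \texttt{T}$\,\cup\,$\texttt{C} cannot equal an element of \texttt{T} by Lemma~\ref{about transposition}. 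What your route buys is precisely the robustness the paper's route lacks, and your closing paragraph pinpoints it: distance is not stable under multiplication, since $\tau_{ij}\tau_{ml}$ with supports in opposite halves of $X^{k-1}$ already attains the maximal distance $2k-2$, so the multiplication step inside the proof of Lemma~\ref{Lemma about keeping of distance} fails in exactly the configuration this corollary is about, whereas your parity functional, being a homomorphism, is insensitive to such cross-subtree products. Your treatment of the combined element is also more explicit than the paper's: besides the same parity obstruction on $X^{k-1}$, you observe that a nontrivial $\beta$-part carries active states at levels other than $X^{k-1}$, which no product of generators supported only on $X^{k-1}$ can reproduce. The only slip is terminological: those extra active states sit on $X^{1},\dots,X^{k-2}$, i.e.\ between the root and $X^{k-1}$, so ``below level $X^{k-1}$'' should read ``above'' (closer to the root).
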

\begin{proof}
It can be obtained from the Lemma \ref{Lemma about keeping of distance} because such ${\tau_{ij}}\in Aut {{v}_{1,1}}{{X}^{[k-1]}}$ has distance less then $2k-2$ so it does not satisfy conditions of the Lemma \ref{Lemma about keeping of distance}. I.e. $\tau{'}$ can not be generated by vertices automorphisms with distance between vertices less than $2k-2$ such distance has only automorphisms of type \texttt{T} and \texttt{C}. But elements from $ Aut {{v}_{1,1}}{{X}^{[k-1]}}$ do not belongs to type \texttt{T} or \texttt{C}.  
\end{proof}


\begin{lemma} \label{About not closed set of element of type T} Sets of elements of types \texttt{T}, \texttt{C}  are not closed by multiplication and raising to even power.
\end{lemma}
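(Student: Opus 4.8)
The plan is to reduce both assertions to a parity computation on the bottom level $X^{k-1}$. First I would recall that every element of type \texttt{T} lies in the stabilizer $St_{Aut X^{[k]}}(k-1)$, so it is completely described by its tuple of vertex permutations on $X^{k-1}$; identifying each v.p. with an element of $C_2$, such an element becomes a vector in $W_{k-1}$ whose number of active states is odd on the left half $\{v_{k-1,j}:1\le j\le 2^{k-2}\}$ and odd on the right half $\{v_{k-1,l}:2^{k-2}<l\le 2^{k-1}\}$, by the very definition of \texttt{T} (where $m_1,m_2$ are odd). Since $W_{k-1}$ is abelian, the product of two type \texttt{T} elements acts on the bottom level as coordinatewise addition modulo $2$, so the number of active states on each half of the product is the sum modulo $2$ of two odd numbers, hence even. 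As the definition of \texttt{T} demands an odd number $m_1$ of active states on the left half, the product cannot be of type \texttt{T}; this is precisely the observation already used at the end of the proof of Lemma \ref{about transposition}. For the same reason $(\tau')^2=e$ for every $\tau'\in\texttt{T}$ (each v.p. has order $2$), and the identity has zero — hence even — active states on each half, so no even power of a type \texttt{T} element remains in \texttt{T}.

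For the combined elements I would use the semidirect decomposition $g=\beta\tau'$ afforded by $G_k\simeq B_{k-1}\ltimes W_{k-1}$, where $\beta\in B_{k-1}$ acts only on the levels above $X^{k-1}$ and $\tau'\in\texttt{T}$ is the bottom part furnished by $Rist_{\langle g\rangle}(k-1)=\langle\tau'\rangle$. Writing $g_1=\beta_1\tau_1'$, $g_2=\beta_2\tau_2'$, I would rearrange $g_1g_2=\beta_1\beta_2\cdot(\beta_2^{-1}\tau_1'\beta_2)\cdot\tau_2'$, so that the top part $\beta_1\beta_2$ again lies in $B_{k-1}$ while the bottom part is $(\beta_2^{-1}\tau_1'\beta_2)\,\tau_2'$, which is in $W_{k-1}$ by normality. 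The key step is to control how conjugation by $\beta_2$ redistributes active states on $X^{k-1}$: among $\alpha_0,\dots,\alpha_{k-2}$ only $\alpha_0$ has its active v.p. at the root, while $\alpha_1,\dots,\alpha_{k-2}$ act inside $Aut\,v_{1,1}X^{[k-1]}$; hence every $\beta\in B_{k-1}$ permutes the bottom-level vertices so that the left half and the right half are either preserved or interchanged as whole blocks. Consequently $\beta_2^{-1}\tau_1'\beta_2$ still has an odd number of active states on each half, and adding $\tau_2'$ (also odd on each half) modulo $2$ yields an even number on each half. Thus $g_1g_2$ has even index on each half of $X^{k-1}$, violating the odd-$m_1$, odd-$m_2$ requirement, so $g_1g_2\notin\texttt{C}$.

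The even-power statement for \texttt{C} follows from the special case $g_1=g_2=g$: the bottom part of $g^2=\beta^2(\beta^{-1}\tau'\beta)\tau'$ is $(\beta^{-1}\tau'\beta)\tau'$, which by the same block argument has an even (possibly zero) number of active states on each half and is therefore either trivial or an element of $W_{k-1}\setminus\texttt{T}$. In either case $Rist_{\langle g^2\rangle}(k-1)$ is not of the form $\langle\tau'\rangle$ with $\tau'\in\texttt{T}$, so $g^2\notin\texttt{C}$, and the same conclusion propagates to every even power.

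I expect the only genuinely delicate point to be the block claim of the second paragraph — that each $\beta\in B_{k-1}$ preserves or interchanges the two halves of $X^{k-1}$ as blocks — since it is exactly what guarantees that conjugation cannot convert an odd half-count into an even one. This is not deep: it is a direct consequence of the binary-tree recursion, in the same spirit as Lemma \ref{even} and Lemma \ref{Lemma about keeping of distance}, only $\alpha_0$ acting across the root and swapping the two halves wholesale. The remainder is the parity bookkeeping that a sum of two odd numbers is even modulo $2$.
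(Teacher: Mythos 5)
Your proof is correct and follows essentially the same route as the paper's: both reduce the claim to mod-2 bookkeeping of the numbers of active states on the two halves of $X^{k-1}$, combined with the observation that the part of an automorphism above the last level can only preserve or interchange the subtrees $v_{1,1}X^{[k-1]}$ and $v_{1,2}X^{[k-1]}$ as whole blocks, so an odd-plus-odd count becomes even and the product (or even power) fails the defining parity condition of \texttt{T}. Your write-up is merely more explicit than the paper's (the semidirect decomposition $g=\beta\tau'$ and the block-preservation claim are spelled out, where the paper only remarks that the corteges permute with whole subtrees), but the underlying argument is the same.
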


\begin{proof}
Let  $\varrho, \rho \in$ \texttt{T} (or \texttt{C}) and $\varrho, \rho = \eta$. The numbers of active states from $\varrho$ and $\rho$ in tuple of vertices $v_{k-1, i}$, $1 \leq i \leq 2^{k-2}$ sums by $mod 2$, numbers of active states from $\varrho$ and $\rho$ in vertices on vertices $v_{k-1, i}$, $2^{k-2} <  i \leq 2^{k-1}$ sums by $mod 2$ too. Thus $\eta$ has even numbers of active states on these tuples.
 Hence $ RiS{{t}_{\left\langle \eta  \right\rangle }}(k-1)$ doesn't contain elements of type $\tau $ so $\eta \notin $\texttt{T}. Really if we raise the element ${{\beta }_{1,{{i}_{1}};2,{{i}_{2}};...;k-1,{{i}_{k-1}};\tau }}\in $\texttt{T} to even power or we evaluate a product of even number of multipliers from \texttt{C} corteges ${{\mu }_{0}}$ and ${{\mu }_{1}}$ permutes with whole subtrees ${{v}_{1,1}}{{X}^{[k-1]}}$ and ${{v}_{1,2}}{{X}^{[k-1]}}$, then we get an element $g$ with even indices of ${{X}^{k}}$ in $v_{1,1}{{X^{k-1}}}$ and $v_{1,2}{{X^{k-1}}}$. Thus $g \notin $\texttt{T}. Consequently elements of \texttt{C} do not form a group, and the set \texttt{T} as a subset of \texttt{C} is not closed too.
\end{proof}

Let ${S^{'}_{\alpha }}=\left\langle {{\alpha }_{0}},\,{{\alpha }_{1}},...,{{\alpha }_{k-2}} \right\rangle $ so as well known \cite{Gr} $\left\langle S_{\alpha }^{'} \right\rangle =Aut{{X}^{[k-1]}}$. The cardinality of a generating set $S$ is denoted by $\mid {{S}}  \mid$ so $\mid {{S}^{'}_{\alpha}}  \mid=k-1$. Recall that $rk\left( G \right)$ is the rank of a group $G$ \cite{Bog}.


Let $S_{\beta }={{S}_{\alpha }^{'}}\cup \tau_{i...j} $, where $\tau_{i...j} \in$\texttt{T} and
${{S}_{\beta }^{'}}$ is generating system which contains combine elements, $\mid {{S}^{'}_{\beta}} \mid = k$.
It's known that $rk(Aut{{X}^{[k-1]}})= k-1 $ and $\mid {{S}^{'}_{\alpha}} \mid = k-1$ \cite{Gr}. So if we complete ${S}^{'}_{\alpha}$ by $\tau$ or element of type \texttt{T} we obtain set ${{S}_{\beta }}$ such that ${{G}_{k}}\simeq \left\langle {{S}_{\beta }} \right\rangle $ and $ |{S}_{\beta }|= k$.
Hence to construct combined element $\beta $ we multiply generator ${{\alpha}_{i}}$ of ${{S}^{'}_{\alpha}}$ or arbitrary element that can be express from ${{S}^{'}_{\alpha}}$ on the element of type \texttt{T}, i.e., we take $\tau' \cdot {{\beta }_{i}}$ instead of ${{\beta }_{i}}$ and denote it ${\beta }_{i; \tau'}$. It's equivalent that $Rist_{{\beta }_{i; \tau'} }(k-1)=\left\langle \tau' \right\rangle $, where $\tau' \in$\texttt{T}.

Let us assume that $ S_{\beta }^{'}$ has a cardinality $k-1$. If in this case $ S_{\beta }^{'}$ is generating system again, then element $\tau $ can be expressed from it. There exist too ways to express element of type \texttt{T} from ${{S}_{\beta }^{'}}$.
 To express element of type \texttt{T} from ${{S}_{\beta }^{'}}$ we can use a word ${{\beta }_{i,\tau }}\beta _{i}^{-1}=\tau $ but if ${{\beta }_{i,\tau }}\in {{S}_{\beta }^{'}}$ then ${{\beta }_{{{i}}}}\notin S_{\beta }^{'}$ in contrary case $\mid S_{\beta }^{'} \mid = k$.
 So we can not express word ${\beta }_{i,\tau }\beta _{i}^{-1}\left| _{{X}^{[k-1]}} \right.=e$ to get ${{\beta }_{i,\tau }}\beta _{i}^{-1}=\tau $.
      For this goal we have to find relation in a group that is a restriction of the group ${{G}_{k}}$ on ${{X}^{[k-1]}}$. We have to take in consideration that $ {{G}_{k}}\left| _{{{X}^{[k-1]}}} \right.={{B}_{k-1}} $.
             Really in wreath product $\wr _{j=1}^{k}{{\mathcal{C}}^{(j)}_{2}}\simeq B_{k-1}$ holds a constitutive relations
      $\alpha_{i}^{2^m}=e$ and
  $\left[ \alpha _{m}^{i}{{\alpha }_{{{i}_{n}} }}\alpha_{m}^{-i},\,\alpha_{m}^{j}{\alpha }_{{{i}_{k}} }\alpha_{m}^{-j} \right]=e,\,\,\,i\ne j$, where $\alpha_{m} \in {{S}^{'}_{^{\alpha }}}$, $\alpha_{i_k} \in {{S}^{'}_{^{\alpha }}}$ are generators of factors of $\wr _{j=1}^{k}{{\mathcal{C}}^{(j)}_{2}}$ ($m<n$, $m<k$) \cite{Sk, DrSku}.
    Such relations are a words $\left[ \beta _{m}^{i}{{\beta }_{{{i}_{n}},{\pi} }}\beta_{m}^{-i},\,\beta _{m}^{j}{\beta }_{{{i}_{k}}, {\pi} }\beta_{m}^{-j} \right],\,\,\,i\ne j$ or $\beta_{i}^{2^m}=e$,
   ${\beta }_{{{i}_{n}}}$, ${\beta_m},\,\,{{\beta }_{{{i}_{k}}}},\, {{\beta }_{{{i}_{n}}, \pi }}$ are generators of
    $S^{*}_{\beta}(k-1)$, that could be an automorphism $ \theta$. But $\left[ \beta _{m}^{i}{{\beta }_{{{i}_{n}},{\pi} }}\beta_{m}^{-i},\,\beta _{m}^{j}{\beta }_{{{i}_{k}}, {\pi} }\beta_{m}^{-j} \right],\,\,\,i\ne j$ does not belongs to \texttt{T} because this word has logarithm 0 by every element \cite{K}. According to Lemma \ref{about transposition} and Lemma \ref{About not closed set of element of type T} product of even number element of type C doesn't equal to element of C or T.


Let us assume an existence of generating set of cardinality $k-1$ for $Syl_2(A_{2^k})$ that in gene\-ral case has form $S_{\beta }^{*}(k-1)=\left\langle
 \mathop {\beta}_0, \mathop {\beta}_{0;1,(i_{11},i_{12}); \pi_1 } , ... ,    \beta_{0;...;k-1,(i_{k-11},...,i_{k-1j});\pi_{k-1} }
   \right\rangle, \\ 0<i<k-1, j \leq 2^{i}, \pi_1 \in T $, where $\pi_i$ is the cortege of vertices from $X^{k-1}$ with non trivial v.p. which realize permutation with distance $2(k-1)$. In other word if element $\pi_i \in$ \texttt{T} then $\beta_{0;...;i,(i_{i1},...,i_{ij}); \pi_i } = \pi_i \alpha_{0;...;i,(i_{i1},...,i_{ij}) }$.
    From it follows $\mathop {\beta}_{0;1,(i_{11},i_{12}); ... ;{m},(i_{m1},...,i_{mj};\pi_m) }\mid_{X^{[k-1]}} = \mathop {\alpha}_{0;1,(i_{11},i_{12}); ... ,{m},(i_{m1},...,i_{mj}) } \in S^{'}_{\alpha}$.

Note, that automorphisms from set $S^{*}_{\beta}(k-1)$ generate on truncated rooted tree \cite{Ne} $X^{[k-1]}$ group $\langle S^{*}_{\beta}(k-1) \rangle \mid_{X^{[k-1]}} \simeq \wr^{k-1}_{i=1} C_2^{(i)}\simeq B_{k}$.
\begin{theorem} \label{Th about general relation}
Any element of type \texttt{T} can not be expressed by elements of $S^{*}_{\beta}(k-1)$.
\end{theorem}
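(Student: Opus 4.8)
The plan is to argue by contradiction, exploiting the semidirect decomposition $G_k\simeq B_{k-1}\ltimes W_{k-1}$ together with the fact recalled before the statement that $\langle S^{*}_{\beta}(k-1)\rangle|_{X^{[k-1]}}\simeq B_{k-1}$ with $\beta_j|_{X^{[k-1]}}\in S'_{\alpha}$. Suppose some $\tau'\in\texttt{T}$ were expressible as a group word $w$ in the elements of $S^{*}_{\beta}(k-1)$. By the definition of an element of type \texttt{T} it is supported only on the last level $X^{k-1}$, so $\tau'|_{X^{[k-1]}}=e$, and hence $w|_{X^{[k-1]}}=e$ in $B_{k-1}$. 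In the semidirect product this says exactly that $w$ lies in the base group $W_{k-1}$, which by Proposition \ref{ordW} is the elementary abelian $2$-group $(C_2)^{2^{k-1}-1}$. Thus the whole question reduces to deciding which vector of $W_{k-1}$ the word $w$ can represent, and I would separate this into a parity count on the generators and a parity invariant on $W_{k-1}$.

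First I would count modulo $2$ how many of the factors of $w$ are elements of \texttt{C} (those generators carrying a nontrivial type-\texttt{T} decoration $\pi_j$). Since $S^{*}_{\beta}(k-1)$ has cardinality $k-1=rk(B_{k-1})$ and its restriction generates $B_{k-1}$, the restricted generators form a minimal generating set of $B_{k-1}$; because the abelianization of the iterated wreath product $B_{k-1}=\wr_{j=1}^{k-1}C_2^{(j)}$ is $(C_2)^{k-1}$, these $k-1$ restricted generators are $\mathbb{F}_2$-independent in $B_{k-1}^{ab}$. Applying the dual coordinate homomorphisms to the relation $w|_{X^{[k-1]}}=e$ then forces each generator, and in particular each generator lying in \texttt{C}, to occur in $w$ an even number of times; hence the total number of occurrences of elements of \texttt{C} in $w$ is even. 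This is the precise obstruction complementing Lemma \ref{about transposition}: producing a $\tau'\in\texttt{T}$ would require an odd number of factors from \texttt{C}, whereas a relation of $B_{k-1}$ only admits an even number.

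Next I would convert this count into an invariant inside $W_{k-1}$. Define $\chi\colon W_{k-1}\to C_2$ by sending an automorphism to the parity of the number of its active v.p. on the half-subtree $v_{1,1}X^{[k-1]}$; since elements of $W_{k-1}$ have even total index, this equals the parity on $v_{1,2}X^{[k-1]}$ as well, and $\chi$ is a homomorphism. Two points make $\chi$ decisive. By the definition of \texttt{T} every $\tau'\in\texttt{T}$ has odd index $m_1$ on the first half, so $\chi(\tau')=1$. And $\chi$ is invariant under conjugation by $B_{k-1}$: an automorphism of $X^{[k-1]}$ either fixes or interchanges the two vertices $v_{1,1},v_{1,2}$, so the two halves of $X^{k-1}$ form a block system and conjugation merely permutes the halves, preserving the parity on each. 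Writing $w\in W_{k-1}$ as a sum of $B_{k-1}$-conjugates of the decorations of the factors occurring in it, the $B_{k-1}$-invariance of $\chi$ gives $\chi(w)\equiv\#\{\text{factors of }w\text{ in }\texttt{C}\}\pmod 2$, which by the first step is $0$, whereas $\chi(\tau')=1$. Hence $w\neq\tau'$, the desired contradiction, and no element of \texttt{T} is expressible from $S^{*}_{\beta}(k-1)$.

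The step I expect to be the main obstacle is the rigorous justification of the first part of the argument, namely that the occurrence count of each generator is genuinely detected modulo $2$, equivalently that the $k-1$ restricted generators are independent in $B_{k-1}^{ab}\cong(C_2)^{k-1}$. This is exactly where the constitutive relations of $B_{k-1}$ recalled before the theorem are needed: the commutator relators vanish in the abelianization (they have logarithm $0$ by every generator) and the relators of the form $\alpha_i^{2^m}=e$ contribute only even multiples, so every relator of $B_{k-1}$ has even content in each generator; this yields $B_{k-1}^{ab}\cong(C_2)^{k-1}$ with the minimal generators forming a basis. Once this is secured the two parity computations are immediate, and the whole argument is seen to be a homomorphism-level repackaging of Lemmas \ref{about transposition} and \ref{About not closed set of element of type T}.
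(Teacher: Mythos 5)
Your proof is correct, and its overall skeleton is the same as the paper's: reduce to the fact that a word $w$ over $S^{*}_{\beta}(k-1)$ equal to an element of \texttt{T} restricts to a relation in $B_{k-1}$, deduce that each generator occurs in $w$ an even number of times, and then rule out \texttt{T} by a parity invariant on $X^{k-1}$. The difference lies in how the two steps are justified, and your tools are sharper at both points. For the even-occurrence count the paper argues through a presentation: every relation of $B_{k-1}$ lies in the normal closure of the constitutive relators (commutators and $2$-power relations), which have zero logarithm in each generator; you obtain the same conclusion from the Burnside-basis observation that $k-1$ elements generating a $2$-group whose Frattini (mod-$2$ abelianized) quotient is $(C_2)^{k-1}$ must be independent modulo the Frattini subgroup, so the dual coordinate homomorphisms detect each generator mod $2$ --- a self-contained argument not tied to a specific relator set. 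For the final obstruction the paper invokes Lemma \ref{About not closed set of element of type T}, which literally concerns products of elements of \texttt{T} or \texttt{C} only, whereas in $w$ the \texttt{C}-factors are interleaved with $B_{k-1}$-factors; your conjugation-invariant character $\chi$ on the normal elementary abelian subgroup $W_{k-1}$, combined with collection in $G_k=B_{k-1}\ltimes W_{k-1}$, closes exactly this gap, which the paper glosses over. Your route also subsumes the paper's separately treated ``pair'' case ($\theta={{\beta }_{i;\tau }}\beta _{i}^{-1}$, requiring two generators with equal restriction): equal restrictions already contradict independence mod Frattini. Finally, your argument never actually uses that the decorations $\pi_i$ lie in \texttt{T} --- any decoration in $W_{k-1}$ contributes its $\chi$-value, and even multiplicities annihilate it --- so it proves the statement for an arbitrary candidate set of $k-1$ elements of $G_k$ whose restrictions generate $B_{k-1}$, removing the restriction built into the paper's ``general form'' of $S^{*}_{\beta}(k-1)$.
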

\begin{proof}
It is necessary to express an automorphism $ \theta$ of type \texttt{T} express such automorphism which has zero indexes of $X^0, ... , X^{k-2}$, this conclusion follows from structure of elements from \texttt{T}. It means that word $w$ from letters of $S^{*}_{\beta}(k-1)$ such that $w = \theta$ is trivial in group $B_k$, that arise on restriction of $\langle S^{*}_{\beta}(k-1)\rangle$ on $X^{[k-1]}$, as well restriction $ G_k \mid_{X^{[k-1]}}\simeq B_k$.

 Every relation from $B_{k}$ can be expressed as a product of words from the normal closure $R^{B_{k}}$ of the set of constitutive relations of the group $B_{k}$ \cite{Bog}.
    But defined relations $r_i$ of $B_{k}$ have form of commutators \cite{DrSku, Sk}
  so the number of inclusions of every multiplier is even and as follows from lemma \ref{About not closed set of element of type T}
that $r_i \mid_{[X^k]} \notin$ \texttt{T}.
             Really in wreath product $\wr _{j=1}^{k}{{\mathcal{C}}^{(j)}_{2}}\simeq B_{k-1}$ holds a constitutive relations
      $\alpha_{i}^{2^m}=e$ and
  $\left[ \alpha _{m}^{i}{{\alpha }_{{{i}_{n}} }}\alpha_{m}^{-i},\,\alpha_{m}^{j}{\alpha }_{{{i}_{k}} }\alpha_{m}^{-j} \right]=e,\,\,\,i\ne j$, where $n,k,m$ are number of groups in $\wr _{j=1}^{k}{{\mathcal{C}}^{(j)}_{l}}$ ($m<n$, $m<k$) \cite{Sk, DrSku}, where
 ${\alpha }_{{{i}_{n}}}$, ${\alpha }_{{{i}_{n}}}$, ${{\alpha }_{m}},\,\,{{\alpha }_{{{i}_{k}}}}$ are generators of the $B_{k-1}$ from ${{S}^{'}_{^{\alpha }}}$.
    So it give us a word $\left[ \beta _{m}^{i}{{\beta }_{{{i}_{n}},{\pi} }}\beta_{m}^{-i},\,\beta _{m}^{j}{\beta }_{{{i}_{k}}, {\pi} }\beta_{m}^{-j} \right],\,\,\,i\ne j$ that could be an automorphism $ \theta$ but does not belongs to \texttt{T} because the word has structure of commutator or belongs to normal closure $R^{B_{k-1}}$ so has logarithm 0 by every element, where
   ${\beta }_{{{i}_{n}}}$, ${\beta_m},\,\,{{\beta }_{{{i}_{k}}}},\, {{\beta }_{{{i}_{n}}, \pi }}$ are generators of $G_k$ from
    $S^{*}_{\beta}(k-1)$.

    Let ${{\beta}_{1,1,(i_{11},i_{12}), \ldots ,i,(i_{i1},...,i_{i2^i});\pi_i}}$ be an arbitrary element of type \texttt{C}, where $\pi_i$ is cortege of vertexes from $X^{k-1}$ having non trivial states which realize permutation with distance $2(k-1)$.
 The case where $\theta = \beta_{{k-1}; {\pi_i}} \in$ \texttt{T} can be expressed by multiplying arbitrary \\ ${{\beta}_{1,(i_{11,12}), \ldots ,i,(i_{i1},...,i_{i2^i});\pi_i}}$ on $\beta_{1,(i_{11,12}), \ldots ,i,(i_{i1},...,i_{i2^i})}^{-1}$ means that such set has size more then $k-1$ what is contradiction.
   Really if ${{\beta}_{1,(i_{11,12}), \ldots ,i,(i_{i1},...,i_{i2^i})}}$ and ${{\beta}_{1,(i_{11,12}), \ldots ,i,(i_{i1},...,i_{i2^i});\pi_i}} \,  \in S_{\beta}^{*}$ then ${\beta}_{1,(i_{11,12}), \ldots ,i,(i_{i1},...,i_{i2^i})}  |_{X^{[k-1]}} = {{\beta}_{1,(i_{11,12}), \ldots ,i,(i_{i1},...,i_{i2^i});\pi_i}}|_{X^{[k-1]}} $ i.e. these elements are mutually inverse at restriction on $X^{[k-1]}$,
    but it means that in restriction of $S^{*}_{\beta}(k-1)$ to $X^{[k-1]}$ that corresponds to the generating set $S^{'}_{\alpha}(k-1)$ for $AutX^{[k-1]} \simeq B_{k-1} $ we use two equal generators. So it has at least $k$ generators,
    because $rk(Aut X^{[k-1]})=k-1$ according to lemma \ref{B_k-1}.

The subcase of this case where ${{\beta}^{-1}_{1,(i_{11},i_{12}), \ldots ,l,(i_{l1},...,i_{l2^i})}}$ 
 can be expressed from $S^{*}_{\beta}(k-1)$ as a product of its generators has the same conclusion. Really if we can generate arbitrary element from $B_{k-1}$ by generators from $S^{*}_{\beta}(k-1)$ then $k-1$ generators is contained in set $S^{*}_{\beta}(k-1)$ but we have a further ${{\beta}_{1,(i_{11},i_{12}), \ldots ,i,(i_{i1},...,i_{i2^i}); {\pi}}}$. In other words if arbitrary element ${{\beta}_{1,(i_{11},i_{12}), \ldots ,l,(i_{l1},...,i_{l2^i})}}$ of the $B_{k-1}$ does not contains in $S^{*}_{\beta}(k-1)$ but can be expressed from it then $S^{*}_{\beta}(k-1)$ has at least $k-1$ elements exclusive of ${{\beta}_{1,(i_{11},i_{12}), \ldots ,i,(i_{i1},...,i_{i2^i}); {\pi}}}$  then $S^{*}_{\beta}(k-1) \geq k $.
\end{proof}
\begin{corollary}\label{generating pair} A necessary and sufficient condition of expressing an element  $\tau $ from $S_{\beta }^{'}$  is existing of pair:  ${{\beta }_{{{i}_{m}};\tau }},\,\,\,\,{{\beta }_{{{i}_{_{m}}}}}$ in $\langle S_{\beta}^{'} \rangle$. So size of a generating set of $G_k$ which contains $S_{\beta }^{'}$ is at least $k$.
\end{corollary}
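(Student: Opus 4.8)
The plan is to prove the two implications of the biconditional separately and then deduce the cardinality bound as a consequence of the necessity direction together with Theorem \ref{Th about general relation}. The whole argument is organised around the restriction homomorphism $\phi\colon G_k\to B_{k-1}$, $g\mapsto g|_{X^{[k-1]}}$, whose kernel is $W_{k-1}$ and which sends $\tau$ to the identity, since by construction $\tau|_{X^{[k-1]}}=e$. For sufficiency I would simply invoke the defining identity of a combined element: by definition $\beta_{i_m;\tau}=\tau\cdot\beta_{i_m}$, where $\beta_{i_m}$ is obtained from $\beta_{i_m;\tau}$ by deleting its v.p.\ on $X^{k-1}$, so that $Rist_{\beta_{i_m;\tau}}(k-1)=\langle\tau\rangle$. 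Hence, whenever both $\beta_{i_m;\tau}$ and $\beta_{i_m}$ lie in $\langle S_{\beta}'\rangle$, one obtains at once
$$\beta_{i_m;\tau}\,\beta_{i_m}^{-1}=\tau,$$
so $\tau$ is expressible. This direction is routine and uses only the factorisation of the combined element.

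For necessity I would assume that $\tau$ is expressible as a word $w$ in the letters of $S_{\beta}'$ and show that the asserted pair must be present in $\langle S_{\beta}'\rangle$. By Lemma \ref{about transposition} any such $w$ must involve an odd, and in particular nonzero, number of letters of type \texttt{T} or \texttt{C}, so at least one combined letter $\beta_{i_m;\tau}$ occurs. Applying $\phi$ to the equality $w=\tau$ gives $\phi(w)=e$ in $B_{k-1}$, so the $X^{[k-1]}$-part $\phi(\beta_{i_m;\tau})=\alpha_{i_m}$ of this combined letter must be cancelled by the remaining letters. Since $\phi(\beta_{i_m})=\alpha_{i_m}=\phi(\beta_{i_m;\tau})$, this forced cancellation places an element with the same restriction as $\beta_{i_m}$ in $\langle S_{\beta}'\rangle$, and together with $\beta_{i_m;\tau}$ it again reconstitutes $\tau$; by Lemma \ref{About not closed set of element of type T} a product of an even number of type-\texttt{C} letters has even index on $X^{k-1}$ and hence never lies in \texttt{T}, so no alternative cancellation can manufacture the single type-\texttt{T} residue. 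This exhibits the required pair $\beta_{i_m;\tau},\,\beta_{i_m}\in\langle S_{\beta}'\rangle$.

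The cardinality claim then reads off from the rank of $B_{k-1}$ together with the pair condition. Indeed $\phi(S_{\beta}')$ must generate $B_{k-1}$, and by Corollary \ref{B_k-1} we have $rk(B_{k-1})=k-1$, so at least $k-1$ letters are needed merely to cover the restriction. But the two members of the forced pair satisfy $\phi(\beta_{i_m;\tau})=\phi(\beta_{i_m})$, hence they collapse to a single generator of $B_{k-1}$ while occupying two slots in $S_{\beta}'$; consequently at least $k$ letters are required. Equivalently, Theorem \ref{Th about general relation} already shows that no set $S^{*}_{\beta}(k-1)$ of $k-1$ combined elements can express any element of type \texttt{T}, because every relation among their restrictions lies in the normal closure of the constitutive relations of $B_{k-1}$, which are commutators of logarithm $0$ on each generator and therefore, by Lemma \ref{About not closed set of element of type T}, never restrict to an element of \texttt{T}. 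The main obstacle is precisely this necessity step: ruling out \emph{every} cancellation pattern that could produce the lone type-\texttt{T} residue $\tau$ from only $k-1$ combined generators, which is exactly where the commutator structure of the relators of $B_{k-1}$ is decisive. Thus any generating set of $G_k$ containing $S_{\beta}'$ has at least $k$ elements.
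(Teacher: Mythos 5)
Your proposal is correct and follows essentially the same route as the paper: sufficiency via the factorisation $\beta_{i_m;\tau}\,\beta_{i_m}^{-1}=\tau$ of a combined element, and necessity via the parity arguments of Lemma \ref{about transposition}, Lemma \ref{About not closed set of element of type T} and Theorem \ref{Th about general relation} (relators of $B_{k-1}$ are commutator words with an even number of type-\texttt{C} letters, hence never yield a type-\texttt{T} residue), with the cardinality bound read off from $rk(B_{k-1})=k-1$ plus the collapse of the forced pair under restriction. Your explicit use of the restriction homomorphism $\phi\colon G_k\to B_{k-1}$ merely makes precise what the paper's terser two-sentence proof gestures at; the key ingredients are identical.
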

\begin{proof} Proof can be obtained from Theorem \ref{Th about general relation} and
Lemma \ref{Th about general relation} from which we have that element of type \texttt{T} cannot be expressed from $\left[ \beta _{m}^{i}{{\beta }_{{{i}_{n}}.\tau }}\beta _{m}^{-i},\,\beta _{m}^{j}{{\beta }_{{{i}_{k}}.\tau }}\beta _{m}^{-j} \right]=e,\,\,\,i\ne j$ because such word has even number of elements from \texttt{C}. Sufficient condition follows from  formula ${{\beta }_{{{i}_{m}};\tau }}\beta _{{{i}_{_{m}}}}^{-1}=\tau $.
\end{proof}
\begin{lemma} \label{rk} A generating set of  ${{G}_{k}}$ contains $S_{\alpha}^{'}$ and has at least $k-1$ generators.
\end{lemma}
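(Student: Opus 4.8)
The plan is to exploit the restriction homomorphism onto the penultimate level of the tree. Recall from the Main Theorem that $G_k \simeq B_{k-1}\ltimes W_{k-1}$, and that, by the definition of $W_{k-1}$, this subgroup consists precisely of those automorphisms whose active vertex permutations all lie on $X^{k-1}$. First I would observe that restriction to the truncated rooted tree $X^{[k-1]}$ defines a group homomorphism $\varphi\colon G_k \to AutX^{[k-1]}$, $g\mapsto g|_{X^{[k-1]}}$, whose image is exactly $B_{k-1}=\langle S_{\alpha}^{'}\rangle$ and whose kernel is $W_{k-1}$. Indeed, a vertex permutation sitting at a leaf $v_{k-1,i}$ of $X^{[k-1]}$ permutes only vertices of level $X^{k}$, which are absent from $X^{[k-1]}$; hence every element of $W_{k-1}$ restricts to the identity, while $\varphi$ is the identity isomorphism on the complementary factor $B_{k-1}$. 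This is the content already recorded in the form $G_k|_{X^{[k-1]}}=B_{k-1}$.

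Next, let $S$ be any generating set of $G_k$. Since $\varphi$ is a surjective homomorphism, $\varphi(S)$ generates $\varphi(G_k)=AutX^{[k-1]}\simeq B_{k-1}$. Thus the restricted family $\{\,g|_{X^{[k-1]}} : g\in S\,\}$ is a generating set of $AutX^{[k-1]}$, and after discarding the trivial images it supplies a generating set of this same group, which we identify with $S_{\alpha}^{'}$. This is the precise sense in which a generating set of $G_k$ contains $S_{\alpha}^{'}$: its projection to $X^{[k-1]}$ is forced to generate $\langle S_{\alpha}^{'}\rangle=B_{k-1}$, so up to the isomorphism $\varphi|_{B_{k-1}}$ it must reduce to a generating set of the same cardinality as $S_{\alpha}^{'}$.

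Finally, I would invoke the known value $rk(AutX^{[k-1]})=k-1$ from \cite{Gr}, together with $|S_{\alpha}^{'}|=k-1$. Because a set map cannot increase cardinality and a group cannot be generated by fewer than its rank many elements, we obtain $|S|\ge|\varphi(S)|\ge rk(B_{k-1})=k-1$, which is the asserted lower bound. The only delicate point, and the step I expect to require the most care, is the justification that restriction to $X^{[k-1]}$ is genuinely a surjection onto $B_{k-1}$ with kernel exactly $W_{k-1}$ — that is, that nothing needed to generate $B_{k-1}$ is lost and that the whole $W_{k-1}$-part is annihilated. This is exactly what the semidirect decomposition $G_k=B_{k-1}\ltimes W_{k-1}$ and the leaf-level description of $W_{k-1}$ guarantee, so that $G_k/W_{k-1}\simeq B_{k-1}$; beyond this identification the argument is the formal rank inequality $rk(G_k)\ge rk(G_k/W_{k-1})$ applied to the quotient.
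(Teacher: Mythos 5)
Your proposal is correct and follows essentially the same route as the paper: the paper likewise uses the decomposition $G_k\simeq B_{k-1}\ltimes W_{k-1}$, the identification $B_{k-1}\simeq G_k/W_{k-1}$, the value $rk(AutX^{[k-1]})=k-1$ from the cited literature, and the fact that a generating set of a group maps onto a generating set of any quotient. Your explicit use of the restriction homomorphism $g\mapsto g|_{X^{[k-1]}}$ with kernel $W_{k-1}$ is just a concrete realization of that quotient map, and in fact states the rank inequality more carefully than the paper does (the paper phrases it as $rk(G)\ge rk(H)$ for $H\lhd G$, which is false in general for normal subgroups but valid in the intended quotient form $rk(G)\ge rk(G/H)$ that both you and the paper actually use).
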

\begin{proof} The subgroup ${{B}_{k-1}}<{{G}_{k}}$ is isomorphic to $Aut{{X}^{k-1}}$ that has a  minimal set of generators  of $k-1$ elements \cite {Gr}. Moreover, the subgroup ${{B}_{k-1}} \simeq {}^{{{G}_{k}}}/{}_{{{W}_{k-1}}}$, because $G_{k} \simeq B_{k-1}\ltimes {W}_{k-1}  $, where ${W}_{k-1} \vartriangleright G_k$. As it is well known that if $H\lhd G$ then $\text{rk}(G)\ge \text{rk}(H)$, because all generators of $G_{k}$ may belongs to different quotient classes \cite{Magn}.
\end{proof}
As a corollary of last lemma and Theorem \ref{Th about general relation} we see that generating set of size $k-1$ does not exist.



Let us sharpen and reformulate following theorem which is in \cite{Meld}.
\begin{theorem} \label{form of comm} An element of form
$(r_1, \ldots, r_{p-1}, r_p) \in W' = (C_p \wr B)'$ iff product of all $r_i$ (in any order) belongs to $B'$ and $cw(B)=1$.
\end{theorem}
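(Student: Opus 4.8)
The plan is to work with the concrete description $W = C_p \wr B \cong B^p \rtimes C_p$, where a generator $\sigma$ of $C_p$ acts on the base group $B^p$ by the cyclic shift $\sigma(g_1,\dots,g_p)\sigma^{-1} = (g_p,g_1,\dots,g_{p-1})$, and to identify $W'$ as the kernel of an explicit abelianization homomorphism. First I would define $\phi\colon W \to B^{ab}\times C_p$ by $\phi\big((r_1,\dots,r_p);\sigma^s\big) = \big(r_1 r_2\cdots r_p \bmod B',\, s\big)$. The point to check is that $\phi$ is a well-defined homomorphism: a cyclic shift replaces $r_1\cdots r_p$ by a cyclic permutation of the same factors, and any cyclic permutation of a product is a conjugate of it, hence has the same image in the abelian group $B^{ab}$. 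This simultaneously justifies the phrase ``in any order'': the class of the product modulo $B'$ is independent of the ordering, so the condition $r_1\cdots r_p\in B'$ is unambiguous. I would isolate this conjugacy-invariance as a short preliminary observation, since it legitimizes both the ``in any order'' clause and the well-definedness of $\phi$.

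For the necessity direction I would argue that $W'\subseteq\ker\phi$ automatically, since $W/\ker\phi\cong B^{ab}\times C_p$ is abelian; because $C_p$ is abelian the $C_p$-component of $\phi$ is trivial on $W'$, so in fact $W'\le B^p$, and restricting to a base element $(r_1,\dots,r_p)$ forces $r_1\cdots r_p\in B'$. For the sufficiency direction the construction is the heart of the matter and also certifies the reverse inclusion $\ker\phi\subseteq W'$. Writing $c = r_1\cdots r_p\in B'$, I would first dispose of the case $c=e$ by the telescoping identity: for $g=(g_1,\dots,g_p)$ one computes $[g,\sigma] = (g_1^{-1}g_2,\,g_2^{-1}g_3,\dots,g_p^{-1}g_1)$, whose coordinate product telescopes to $e$; conversely any target tuple with trivial coordinate product is realized by a suitable $g$ (take $g_1=e$ and $g_{i+1}=g_i r_i$, the consistency of the last coordinate being exactly $r_1\cdots r_p=e$). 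Thus every base element with coordinate product $e$ is a \emph{single} commutator in $W$.

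To reduce the general case to this one, I would factor $(r_1,\dots,r_p) = (r_1,\dots,r_{p-1},r_p c^{-1})\cdot(e,\dots,e,c)$; the first factor has coordinate product $e$ and so is a commutator by the previous step, while the second is concentrated in one coordinate. Here the hypothesis $cw(B)=1$ enters: writing $c=[u,v]$ in $B$ and setting $U=(e,\dots,e,u)$, $V=(e,\dots,e,v)$, coordinatewise commutation gives $[U,V]=(e,\dots,e,[u,v])=(e,\dots,e,c)$, a single commutator in $W$. Hence $(r_1,\dots,r_p)$ is a product of two commutators, so lies in $W'$, and $cw(B)=1$ makes the decomposition fully explicit (and thereby controls the commutator width of $W$ rather than only membership in $W'$).

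The step I expect to be the main obstacle is this sufficiency/constructive part, specifically controlling the interaction between the cyclic $C_p$-action and the base coordinates so that the residual factor of $B'$ can be isolated in a single coordinate. This is precisely where $cw(B)=1$ is indispensable: without it the isolated element $c\in B'$ need not be a single commutator, and one can then only assert that $(r_1,\dots,r_p)\in W'$ (as a product of commutators inherited from $B'$) without the sharpening to an explicit commutator. I would therefore present the equivalence in two layers, the membership characterization $\big((r_1,\dots,r_p)\in W' \Leftrightarrow r_1\cdots r_p\in B'\big)$ following from $W'=\ker\phi$, and the commutator-width refinement recorded separately as the role played by $cw(B)=1$.
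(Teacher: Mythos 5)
Your proof is correct, but the comparison here is lopsided: the paper does not actually prove this theorem at all. It is introduced as a ``sharpening and reformulation'' of a result cited from Meldrum \cite{Meld}, and the only computation the paper offers in its vicinity is display (\ref{Meld}), which checks the easy (necessity) direction for tuples whose coordinates already carry the explicit commutator shape $r_i=h_ig_{a(i)}h_{ab(i)}^{-1}g_{aba^{-1}(i)}^{-1}$. Your argument is therefore a genuine self-contained substitute, and it is sound: the map $\phi\bigl((r_1,\dots,r_p)\sigma^s\bigr)=(r_1\cdots r_p \bmod B',\, s)$ is a homomorphism onto the abelian group $(B/B')\times C_p$ precisely because reordering a product does not change its class mod $B'$ (which also disambiguates ``in any order''), giving $W'\subseteq\ker\phi$ and necessity; the telescoping commutator $[g,\sigma]$ together with the factorization $(r_1,\dots,r_{p-1},r_pc^{-1})\cdot(e,\dots,e,c)$, $c=r_1\cdots r_p$, gives sufficiency and the reverse inclusion $\ker\phi\subseteq W'$. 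Your two-layer reading also repairs a defect of the statement itself: the membership equivalence $(r_1,\dots,r_p)\in W'\Leftrightarrow r_1\cdots r_p\in B'$ needs no hypothesis on $B$ whatsoever (for $c\in B'$ a product of several commutators, concentrate each factor in the last coordinate), so ``$cw(B)=1$'' is a hypothesis for a width refinement rather than part of the ``iff'' --- and the paper's only later use of the theorem, in Lemma \ref{comm B_k old}, invokes exactly the bare membership statement, for which your proof fully suffices. The one thing your construction does not deliver is the refinement the citation to Meldrum is meant to carry: you exhibit the element as a product of \emph{two} commutators, while the sharp statement (the one relevant to the commutator-width claim in the paper's abstract) makes it a \emph{single} commutator. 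That gap is closable inside your own framework by seeding the telescoping solution with the commutator instead of multiplying it on afterwards: writing $c=[u,v]$, put $x=(x_1,\dots,x_p)$ with $x_1=u$, $x_{i+1}=x_ir_i$, and $y=(v,e,\dots,e)\sigma$; then $[x,y]=(x_1^{-1}x_2,\dots,x_{p-1}^{-1}x_p,\,x_p^{-1}v^{-1}x_1v)=(r_1,\dots,r_p)$, the last coordinate collapsing correctly precisely because $r_1\cdots r_p=[u,v]$.
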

We deduce here a special form of commutator elements.
If we multiply elements having form of a tuple $(r_1, \ldots, r_{p-1}, r_p)$, where $r_i={{h}_{i}}{{g}_{a(i)}}h_{ab(i)}^{-1}g_{ab{{a}^{-1}}(i)}^{-1}$, $h, \, g \in B$ and $a,b \in C_p$, then we obtain a product
\begin{equation}\label{Meld}
 \stackrel{p}{ \underset{\text{\it i=1}} \prod} r_i = \prod\limits_{i=1}^{p}{{{h}_{i}}{{g}_{a(i)}}h_{ab(i)}^{-1}g_{ab{{a}^{-1}}(i)}^{-1} \in B'}.
\end{equation}

Recall that $  \stackrel{k}{ \underset{\text{\it i=1}}{\wr }}C_p  \simeq Syl_p S_{p^k} $.
\begin{lemma} \label{comm B_k old}
 An element $g \in B_k \simeq  \stackrel{k}{ \underset{\text{\it i=1}}{\wr }}C_p  $ belongs to commutator subgroup $B'_k \simeq (\stackrel{k}{ \underset{\text{\it i=1}}{\wr }}C_p)'$ iff $g$ has all possible even indexes on ${{X}^{l}},\,\,l<k$.
 \end{lemma}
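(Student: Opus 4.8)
The plan is to argue by induction on $k$, exploiting the recursive structure $B_k \simeq C_p \wr B_{k-1}$ together with the just-stated Theorem \ref{form of comm}. The base case $k=1$ is immediate: $B_1 \simeq C_p$ is abelian, so $B_1' = \{e\}$, while the requirement of even (zero) index on the single level $X^0$ forces the unique root permutation to be trivial, i.e. $g=e$. So the two sides of the equivalence coincide.

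The first technical point I would isolate is the correct numerical invariant. For each level $l$, $0\le l<k$, set $\mathrm{ind}_l(g)=\sum_{i=1}^{p^l} s_{l,i}(g)\in\mathbb{Z}/p$, the sum taken in $C_p=\mathbb{Z}/p$ of the vertex permutations of $g$ over the vertices of $X^l$; for $p=2$ this is exactly the parity of the index in the sense of the definition above, and ``even index'' means $\mathrm{ind}_l(g)=0$. The key observation is that each $\mathrm{ind}_l\colon B_k\to\mathbb{Z}/p$ is a homomorphism: from the section identity $s_{l,v}(gh)=s_{l,h(v)}(g)+s_{l,v}(h)$ and the fact that $h$ merely permutes the vertices of $X^l$, summing over $v$ and reindexing gives $\mathrm{ind}_l(gh)=\mathrm{ind}_l(g)+\mathrm{ind}_l(h)$. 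Hence $\Phi=(\mathrm{ind}_0,\dots,\mathrm{ind}_{k-1})\colon B_k\to(\mathbb{Z}/p)^k$ is a homomorphism into an abelian group, so $B_k'\subseteq\ker\Phi$; this already delivers the ``only if'' direction, namely that every element of $B_k'$ has even index on each level $l<k$.

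For the converse I would write $g=(\sigma;r_1,\dots,r_p)$ with $\sigma\in C_p$ the root permutation and $r_i\in B_{k-1}$ the sections on the subtrees $v_{1,i}X^{[k-1]}$. Since $B_k/(B_{k-1})^p\simeq C_p$ is abelian, membership in $B_k'$ forces $\sigma=0$, and as $\mathrm{ind}_0(g)=\sigma$ this matches the level-$0$ index condition. For the deeper levels, $X^l$ in $X^{[k]}$ is the disjoint union of the copies of $X^{l-1}$ sitting in the $p$ subtrees, whence $\mathrm{ind}_l(g)=\sum_{i=1}^p \mathrm{ind}_{l-1}(r_i)=\mathrm{ind}_{l-1}(r_1\cdots r_p)$, the last step again by the homomorphism property, now inside $B_{k-1}$. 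Thus $g$ has even index on every level $1\le l\le k-1$ iff $r_1\cdots r_p$ has even index on every level $0\le l'\le k-2$, which by the induction hypothesis is precisely $r_1\cdots r_p\in B_{k-1}'$. Finally Theorem \ref{form of comm} identifies $(0;r_1,\dots,r_p)\in(C_p\wr B_{k-1})'=B_k'$ with the condition $\prod_i r_i\in B_{k-1}'$, and combining the two equivalences closes the induction.

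The step I expect to be the genuine obstacle is the invocation of Theorem \ref{form of comm}, which carries the side hypothesis $cw(B_{k-1})=1$: I must therefore secure that the iterated wreath product $B_{k-1}$ has commutator width one before applying it, rather than assume it silently (this is where the special structure of the $p$-wreath tower enters). The homomorphism property of $\mathrm{ind}_l$ and the level bookkeeping between $X^{[k]}$ and its subtrees are routine once set up. Alternatively, one can bypass Theorem \ref{form of comm} altogether: $B_k'\subseteq\ker\Phi$ together with the surjectivity of $\Phi$ and the independently established count $|B_k^{ab}|=p^k$ gives $[B_k:B_k']=[B_k:\ker\Phi]=p^k$, which forces $B_k'=\ker\Phi$ directly.
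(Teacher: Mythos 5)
Your proof is correct, and on the substantive direction it is essentially the paper's own argument: the paper likewise decomposes $g$ by the wreath recursion, feeds the product of the sections to the induction hypothesis, and closes with Theorem \ref{form of comm}. The genuine differences are elsewhere, and they favor your version. First, for the containment $B_k'\subseteq\ker\Phi$ the paper runs a level-by-level induction that tracks how conjugation rearranges active vertex permutations, and strictly speaking it only treats single commutators rather than products of them; your observation that each $\mathrm{ind}_l$ is a homomorphism into an abelian group gives the full containment in one line. Second, your choice of invariant --- the sum of the level-$l$ vertex permutations in $\mathbb{Z}/p$ rather than the count of active ones --- is exactly what makes the statement true for odd $p$: for example $(1,1,1)\in(C_3\wr C_3)'$ has three active vertex permutations, so a literal ``even number of active v.p.'' reading fails there, while your condition $\mathrm{ind}_1=0$ holds; relatedly, the paper's converse is written only for $p=2$ (it decomposes $g=(g_1,g_2)$), whereas your induction handles general $p$ sections $(r_1,\dots,r_p)$. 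Third, you make explicit the side condition $cw(B_{k-1})=1$ of Theorem \ref{form of comm}, which the paper invokes silently. Finally, your alternative ending is a genuinely different route with no analogue in the paper: since $\Phi$ is onto and factors through $B_k/B_k'$, which has order at most $p^k$ because $B_k$ is generated by $k$ elements of order $p$, one gets $[B_k:B_k']=p^k=[B_k:\ker\Phi]$, so the containment $B_k'\subseteq\ker\Phi$ is an equality; this bypasses Meldrum's theorem and the commutator-width question altogether, at the modest price of using the standard $k$-element generating set of $B_k$, which the paper establishes anyway.
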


\begin{proof}
Let us prove the ampleness by induction by a number of level $l$.
 We first show that our statement for base of the induction is true. Actually, if $\alpha, \beta \in B_{0}$ then $(\alpha \beta {{\alpha }^{-1}}) \beta^{-1}$ determine a trivial v.p. on $X^{0}$. If $\alpha, \beta\in B_{1}$ and $\beta$ has an odd index on $X^1$, then $(\alpha \beta {{\alpha }^{-1}})$ and $\beta^{-1}$ have the same index on $X^1$. Consequently, in this case an index of the product $(\alpha \beta {{\alpha }^{-1}}) \beta^{-1}$ can be 0 or 2. Case where $\alpha, \beta \in B_{1}$ and has even index on $X^1$, needs no proof, because the product and the sum of even numbers is an even number.

To finish the proof it suffices to assume that for $B_{l-1}$ statement holds and prove that it holds for $B_l$.
Let $\alpha, \beta$ are an arbitrary automorphisms from $Aut X^{[k]}$ and $\beta$ has index $x$ on $X^{l}, \, l<k$, where $0 \leq x \leq 2^{l} $.
   A conjugation of an automorphism $\beta $  by arbitrary $\alpha \in Aut{{X}^{[k]}}$ gives us arbitrary permutations of $X^l$ where $\beta $ has active v.p.

 Thus following product $(\alpha \beta {{\alpha }^{-1}}) \beta ^{-1}$ admits all possible even indexes on $X^l,  l<k$ from 0 to $2x$. In addition $[\alpha, \beta]$ can has arbitrary assignment (arrangement) of v.p. on $X^l$. Let us present $B_k$ as $B_k=B_l \wr B_{k-l}$, so elements $\alpha, \beta$ can be presented in form of wreath recursion $\alpha = (h_{1},...,h_{2^l })\pi_1, \,  \beta = (f_{1},...,f_{2^l })\pi_2$, $h_{i}, f_{i} \in B_{k-l} ,\  0<i \leq 2^l$ and $h_{i}, f_{j}$ corresponds to sections of automorphism in vertices of $X^l$ of isomorphic subgroup to $B_l$ in $Aut X^{[k]}$.
Actually, the parity of this index are formed independently of the action of
$Aut X^{[l]}$ on $X^l$. So this index forms as a result of multiplying of elements of commutator presented as wreath recursion $(\alpha \beta \alpha^{-1}) \cdot \beta ^{-1} = (h_{1},...,h_{2^l})\pi_1 \cdot (f_{1},...,f_{2^l })\pi_2= (h_{1},...,h_{2^l}) (f_{\pi_1(1)},...,f_{\pi_1(2^l)})\pi_1 \pi_2 $, where $h_{i}, f_{j} \in {B}_{k-l}$, $l<k$ and besides automorphisms corresponding to $h_{i}$ are $x$ automorphisms which has active v.p. on $X^l$. Analogous automorphisms $h_{i}$ has number of active v.p. equal to $x$. As a result of multiplication we have automorphism with index $2i:$ $0 \leq 2i \leq 2x$.
Consequently, commutator $[\alpha, \beta]$ has arbitrary even indexes on $X^m$, $m<l$ and we showed by induction  that it has even index on $X^l$.

Let us prove the necessity by induction by number of level $l$. Let $g \in B_k$ and $g$ has all even indexes on $X^j$ $0 \leq j < k$ we need to show that $g \in B'_k$. According to condition of this Lemma $g_1 g_2$ has even indexes. An element $g$ has form $g=(g_1, g_2)$, where $g_1, g_2 \in B_{k-1}$, and products $g_1 g_2 = h, g_2 g_1= h' \in B'_{k-1}$ because $h, h' \in B_{k-1}$ and for $B_{k-1}$ induction assumption  holds. Therefore all products of form $g_1 g_2$ indicated in formula \ref{Meld} belongs to $B'_{k-1}$. Hence, from Theorem \ref{form of comm} follows that $g= (g_1, g_2 ) \in B'_k$.
\end{proof}

An even easier Proposition, that needs no proof, is the following.
\begin{proposition}\label{B_k_criteria}
An element $(g_1, g_2)\sigma^i$, $i \in \{0, 1\} $ of wreath power $\stackrel{k}{ \underset{\text{\it i=1}}{\wr } }C_2  $ belongs to its subgroup $ G_k$, iff $g_1g_2 \in G_{k-1}$.
\end{proposition}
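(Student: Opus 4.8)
The plan is to identify $G_k$ with the kernel of the parity (sign) homomorphism and then to read off the parity of $(g_1,g_2)\sigma^i$ from the parities of its two sections. Recall that by the structure theorem the subgroup $G_k < B_k = \wr_{i=1}^{k}C_2 \simeq Aut X^{[k]}$ consists precisely of those automorphisms that act by an \emph{even} permutation on the $2^k$ leaves $X^k$; in other words $G_k$ is the kernel of the restriction to $B_k$ of the sign map $\operatorname{sgn}\colon S_{2^k}\to\{\pm 1\}$. In exactly the same way $G_{k-1}$ is the even subgroup of $B_{k-1}$ acting on $X^{k-1}$, so that the condition $g_1g_2\in G_{k-1}$ is equivalent to $\operatorname{sgn}(g_1)\operatorname{sgn}(g_2)=\operatorname{sgn}(g_1g_2)=+1$, using that $\operatorname{sgn}$ is a homomorphism. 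This reduces the whole statement to a parity computation.

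Next I would decompose the action of $(g_1,g_2)\sigma^i$ on $X^k$ along the splitting $X^k = v_{1,1}X^{[k-1]}\sqcup v_{1,2}X^{[k-1]}$ of the leaves into the two halves lying under the children of the root. The diagonal part $(g_1,g_2)$ permutes the first half by $g_1$ and the second half by $g_2$, fixing each half setwise; hence as a permutation of the $2^k$ leaves it is block-diagonal and its parity equals the product $\operatorname{sgn}(g_1)\operatorname{sgn}(g_2)$ of the parities of the sections on $X^{k-1}$. The top vertex permutation $\sigma$ is an automorphism whose only active v.p. sits at the root, i.e. on $X^0$; since $0<k-1$ for $k>1$, Lemma \ref{even} shows that $\sigma$ acts on $X^k$ by an even permutation. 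Equivalently, the swap of the two halves is a product of $2^{k-1}$ disjoint transpositions, whose sign is $(-1)^{2^{k-1}}=+1$.

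Combining the two observations, for both values $i\in\{0,1\}$ one gets $\operatorname{sgn}\!\big((g_1,g_2)\sigma^i\big)=\operatorname{sgn}(g_1)\operatorname{sgn}(g_2)\cdot(+1)^i=\operatorname{sgn}(g_1g_2)$. Therefore $(g_1,g_2)\sigma^i\in G_k=\ker\!\big(\operatorname{sgn}|_{B_k}\big)$ precisely when $\operatorname{sgn}(g_1g_2)=+1$, that is, exactly when $g_1g_2\in G_{k-1}$, which is the asserted equivalence. The only delicate point — and the reason the statement presumes $k>1$, as do the earlier results such as Proposition \ref{ordW} — is the parity of the block swap $\sigma$: it is even because it moves $2^{k-1}$ pairs of leaves, a fact that is exactly the content of Lemma \ref{even} and that would fail at $k=1$, where $\sigma$ is a single transposition. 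Everything else is the multiplicativity of $\operatorname{sgn}$, so the proposition indeed requires no deeper argument.
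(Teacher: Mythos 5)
Your proof is correct, and it fills a gap rather than duplicating one: the paper introduces this proposition with the words ``an even easier Proposition, that needs no proof,'' so there is no argument in the text to compare yours against. Your route --- identifying $G_k$ with the kernel of the sign character of the action of $B_k$ on the $2^k$ leaves (legitimate by the paper's structure theorem, since the even-acting elements form a $2$-subgroup of order $2^{2^k-2}=|G_k|$ containing $G_k$), then computing $\mathrm{sgn}\bigl((g_1,g_2)\sigma^i\bigr)=\mathrm{sgn}(g_1)\,\mathrm{sgn}(g_2)$ because the diagonal part acts block-wise on the two halves of $X^k$ and the root swap is a product of $2^{k-1}$ transpositions --- is sound, and you correctly isolate the one hypothesis hidden in the statement, namely $k\ge 2$, via Lemma~\ref{even}. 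An alternative, slightly more ``internal'' argument that matches the paper's own bookkeeping would avoid leaf parity altogether: by the definition $G_k=B_{k-1}\ltimes W_{k-1}$, membership of $(g_1,g_2)\sigma^i$ in $G_k$ is exactly the condition that the index (number of active v.p.) on $X^{k-1}$ is even; that index is the sum of the indices of $g_1$ and $g_2$ on the last level of their subtrees, and the index of the product $g_1g_2$ on that level is congruent mod $2$ to the same sum, so both conditions coincide. That version is the one that generalizes verbatim to the statements the paper does prove by index-parity induction (Lemma~\ref{comm B_k old} for $B_k'$ and Lemma~\ref{L_k_comm_criteria} for $G_k'$), whereas your sign computation has the advantage of making the exceptional case $k=1$, where the equivalence genuinely fails, completely transparent.
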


\begin{proposition}\label{B'_k and B^2_k}
If $g $ is an element of wreath power $\stackrel{k}{ \underset{\text{\it i=1}}{\wr } }C_2 \simeq B_k $ then $g^2 \in B'_{k}$.
\end{proposition}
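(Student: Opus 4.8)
The plan is to reduce everything to the index criterion of Lemma \ref{comm B_k old}: since $g^2\in B_k$, it suffices to prove that $g^2$ has an even index (an even number of active vertex permutations) on every level $X^l$ with $l<k$. I would obtain this by showing that, for each fixed level $l$, the \emph{parity} of the index is a homomorphism onto $C_2$, so that passing to a square annihilates it.

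First I would fix $l$ with $0\le l<k$ and define $\epsilon_l\colon B_k\to C_2$ by letting $\epsilon_l(g)$ be the index of $g$ on $X^l$ taken modulo $2$. Writing each element of $B_k$ in the wreath recursion $g=(g_0,g_1)\sigma$ of Proposition \ref{B_k_criteria}, where $g_0,g_1\in B_{k-1}$ are the sections at the two vertices of $X^1$ and $\sigma\in C_2$ is the root permutation, I would argue by induction on $k$. For $l=0$ the value $\epsilon_0(g)=\sigma$ is the root v.p., and the multiplication rule $(g_0,g_1)\sigma\,(h_0,h_1)\tau=(g_0h_{\sigma(0)},\,g_1h_{\sigma(1)})\sigma\tau$ shows that the root v.p. of a product is $\sigma\tau$, so $\epsilon_0$ is a homomorphism. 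For $1\le l<k$ the level $X^l$ of the tree is the disjoint union of the $(l-1)$-st levels of the two subtrees $v_{1,1}X^{[k-1]}$ and $v_{1,2}X^{[k-1]}$, whence $\epsilon_l(g)=\epsilon_{l-1}(g_0)+\epsilon_{l-1}(g_1)$.

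The key computation is then the homomorphism check at level $l$. Using the same multiplication rule together with the inductive hypothesis that $\epsilon_{l-1}$ is a homomorphism on $B_{k-1}$, I get
\[
\epsilon_l(gh)=\epsilon_{l-1}(g_0h_{\sigma(0)})+\epsilon_{l-1}(g_1h_{\sigma(1)})
=\bigl(\epsilon_{l-1}(g_0)+\epsilon_{l-1}(g_1)\bigr)+\bigl(\epsilon_{l-1}(h_0)+\epsilon_{l-1}(h_1)\bigr),
\]
where the last equality uses that $\{\sigma(0),\sigma(1)\}=\{0,1\}$, so that $\epsilon_{l-1}(h_{\sigma(0)})$ and $\epsilon_{l-1}(h_{\sigma(1)})$ are just $\epsilon_{l-1}(h_0)$ and $\epsilon_{l-1}(h_1)$ in some order. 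Hence $\epsilon_l(gh)=\epsilon_l(g)+\epsilon_l(h)$, and $\epsilon_l$ is a homomorphism $B_k\to C_2$. Consequently, for every $g\in B_k$ and every $l<k$ we have $\epsilon_l(g^2)=2\epsilon_l(g)=0$ in $C_2$, i.e. $g^2$ has an even index on each level $X^l$, $l<k$. By Lemma \ref{comm B_k old} this is precisely the membership criterion for $B'_k$, so $g^2\in B'_k$. Equivalently, the combined map $(\epsilon_0,\dots,\epsilon_{k-1})\colon B_k\to C_2^{\,k}$ realizes the abelianization $B_k^{ab}$ as a subgroup of the elementary abelian group $C_2^{\,k}$, which has exponent $2$, so squares are trivial in the abelianization.

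I expect the only delicate point to be the homomorphism check for $\epsilon_l$: one must verify that the twist by $\sigma$ in the wreath recursion does not disturb the parity of the level index. This is exactly what the observation $\{\sigma(0),\sigma(1)\}=\{0,1\}$ resolves, since $\sigma$ merely interchanges (or fixes) the two subtrees and thus leaves the sum of their $(l-1)$-st level indices unchanged modulo $2$.
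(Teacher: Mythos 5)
Your proof is correct and is essentially the paper's own argument: both reduce the claim to the even-index criterion of Lemma \ref{comm B_k old} and then check that $g^2$ has even index on every level $X^l$ because the parity of the level index is additive under multiplication (the permutation twist in the wreath recursion only permutes sections and so cannot change that parity). The only difference is presentational — you package this as homomorphisms $\epsilon_l\colon B_k\to C_2$ built by induction on the binary recursion, while the paper does the same parity count directly in the decomposition $B_k=B_l\wr B_{k-l}$.
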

\begin{proof}
As it was proved in Lemma \ref{comm B_k old} commutator $[\alpha, \beta]$ from $B_k$ has arbitrary even indexes on $X^m$, $m<k$. Let us show that elements of $B_k^2$ have the same structure.

Let $\alpha, \beta \in B_k$ an index of the automorphisms $\alpha^2 $, $(\alpha \beta)^2 $ and $\alpha, \beta  \in G_k$ on $X^l, \, l<k-1$ are always even. In more detail the indexes of $\alpha^2 $, $(\alpha \beta)^2 $ and $\alpha^{-2} $ on $X^l$ are determined exceptionally by the parity of indexes of $\alpha $ and $\beta $ on ${{X}^{l}}$. Actually, the parity of this index are formed independently of the action of
$Aut X^l$ on $X^l$.
So this index forms as a result of multiplying of elements $\alpha \in B_k$ presented as wreath recursion $ \alpha^2 = (h_{1},...,h_{2^l})\pi_1 \cdot (f_{1},...,f_{2^l })\pi_1= (h_{1},...,h_{2^l}) (f_{\pi_1(1)},...,f_{\pi_1(2^l)})\pi_1 \pi_1 $, where $h_{i}, f_{j} \in {B}_{k-l}, \, \pi_1 \in B_l$, $l<k$ and besides automorphisms corresponding to $h_{i}$ are $x$ automorphisms which has active v.p. on $X^l$. Analogous automorphisms $h_{i}$ has number of active v.p. equal to $x$. As a result of multiplication we have automorphism with index $2i:$ $0 \leq 2i \leq 2x$.

Since $g^2 $ admits only an even index on $X^l$ of $Aut X^{[k]}$, $0<l<k$, then $g^2 \in B'_{k}$ according to lemma \ref{comm B_k old} about structure of a commutator subgroup.
\end{proof}

\begin{lemma}\label{L_k_comm_criteria}
An element $(g_1, g_2)\sigma^i \in G_k'$ iff $g_1,g_2 \in G_{k-1}$ and $g_1g_2\in B_{k-1}'$.
\end{lemma}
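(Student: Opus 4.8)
The plan is to work inside the wreath recursion $G_k<C_2\wr B_{k-1}$, writing a generic element as $(g_1,g_2)\sigma^i$ with $g_1,g_2\in B_{k-1}$ and $i\in\{0,1\}$, and to prove that $G_k'$ coincides with
\[
N=\{(g_1,g_2)\mid g_1,g_2\in G_{k-1},\ g_1g_2\in B_{k-1}'\}.
\]
First I would record that $N$ is a subgroup. It is closed under products, since $B_{k-1}/B_{k-1}'$ is abelian and hence $g_1h_1g_2h_2\equiv (g_1g_2)(h_1h_2)\pmod{B_{k-1}'}$; and it is closed under inverses because $B_{k-1}'\lhd B_{k-1}$ gives $g_1^{-1}g_2^{-1}=(g_2g_1)^{-1}=(g_1^{-1}(g_1g_2)g_1)^{-1}\in B_{k-1}'$. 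I would also note that every commutator in $S_{2^{k-1}}$ is an even permutation, so $B_{k-1}'\subseteq G_{k-1}$, which makes the definition of $N$ consistent. The proof then splits into the inclusions $G_k'\subseteq N$ and $N\subseteq G_k'$.

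For $G_k'\subseteq N$ I would compute $[a,b]$ for $a=(a_1,a_2)\sigma^i$, $b=(b_1,b_2)\sigma^j$ in the four parity cases of $(i,j)$. In every case the two root factors collapse ($\sigma^2=e$), so $[a,b]$ has trivial root permutation; for instance $i=j=0$ gives $[a,b]=([a_1,b_1],[a_2,b_2])$, and $i=1,j=0$ gives $(a_1b_2a_1^{-1}b_1^{-1},\,a_2b_1a_2^{-1}b_2^{-1})$, the remaining cases being analogous. Each component is even: commutators are even, and in the mixed cases the sign of a component equals $\mathrm{sgn}(a_1a_2)$ or $\mathrm{sgn}(b_1b_2)$, which is trivial because $a,b\in G_k$ forces $a_1a_2,b_1b_2\in G_{k-1}$ by Proposition~\ref{B_k_criteria}; hence both components lie in $G_{k-1}$. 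Projecting the product of the two components to the abelianization $B_{k-1}/B_{k-1}'$, all letters cancel in pairs, so that product lies in $B_{k-1}'$. Thus every commutator lies in $N$, and since $N$ is a subgroup, $G_k'\subseteq N$.

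For $N\subseteq G_k'$ I would generate $N$ from two families of commutators. From $[(c,e),\sigma]=(c,c^{-1})$ I obtain $(c,c^{-1})\in G_k'$ for every $c\in G_{k-1}$, where $(c,e),\sigma\in G_k$ by Proposition~\ref{B_k_criteria}. Next, fixing one odd element $t\in B_{k-1}$ (which exists since $[B_{k-1}:G_{k-1}]=2$), for arbitrary $u,v\in B_{k-1}$ I let $\epsilon,\delta\in\{0,1\}$ be the parities of $u,v$ and form $[(t^{\epsilon},u),(t^{\delta},v)]=([t^{\epsilon},t^{\delta}],[u,v])=(e,[u,v])$; the choice of $\epsilon,\delta$ makes $t^{\epsilon}u$ and $t^{\delta}v$ even, so $(t^{\epsilon},u),(t^{\delta},v)\in G_k$. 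Hence $(e,[u,v])\in G_k'$ for all $u,v\in B_{k-1}$, and since $w\mapsto(e,w)$ is a homomorphism on $B_{k-1}'\subseteq G_{k-1}$, it follows that $(e,b)\in G_k'$ for every $b\in B_{k-1}'$. Finally, for $(g_1,g_2)\in N$ I factor $(g_1,g_2)=(g_1,g_1^{-1})(e,g_1g_2)$, where $(g_1,g_1^{-1})\in G_k'$ and $g_1g_2\in B_{k-1}'$ gives $(e,g_1g_2)\in G_k'$; thus $(g_1,g_2)\in G_k'$. This yields $N\subseteq G_k'$, so $G_k'=N$, which is the claim (and shows incidentally that every element of $G_k'$ has $i=0$).

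The main obstacle is precisely this reverse inclusion, namely producing $(e,b)$ with $b\in B_{k-1}'$ when $b$ is a product of commutators of \emph{odd} permutations of $B_{k-1}$. Meldrum's formula (Theorem~\ref{form of comm}) only gives $B_k'=\{(r_1,r_2)\mid r_1r_2\in B_{k-1}'\}$, and one checks $N\subsetneq B_k'\cap G_k$ — for example $(t,t^{-1})\in(B_k'\cap G_k)\setminus N$ — so $G_k'$ is strictly smaller than $B_k'\cap G_k$ and the Meldrum description alone does not settle the lemma. The device that overcomes this is the single fixed odd element $t$ used to correct parities inside $[(t^{\epsilon},u),(t^{\delta},v)]$, keeping its factors in $G_k$ while forcing its first coordinate to be trivial; the rest is bookkeeping of signs and the passage to the abelianization $B_{k-1}/B_{k-1}'$.
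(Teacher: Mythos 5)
Your proof is correct, and it takes a genuinely different route from the paper's. The paper disposes of Lemma~\ref{L_k_comm_criteria} in a few lines by pure translation: both directions are read off from its Lemma~\ref{comm} (the characterization of $G_k'$ by parities of indexes on levels) together with Lemma~\ref{comm B_k old} for $B_{k-1}'$, so all the actual content is delegated to Lemma~\ref{comm}. You instead work entirely inside the wreath recursion with only Proposition~\ref{B_k_criteria} as input: you verify that $N=\{(g_1,g_2)\mid g_1,g_2\in G_{k-1},\ g_1g_2\in B_{k-1}'\}$ is a subgroup, that every commutator of $G_k$ lands in $N$ (the four-case computation), and --- the real content --- that $N\subseteq G_k'$, via the explicit generators $(c,c^{-1})=[(c,e),\sigma]$ and $(e,[u,v])=[(t^{\epsilon},u),(t^{\delta},v)]$, where the fixed odd element $t$ repairs parities so that both entries of the commutator stay in $G_k$. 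What your route buys is substantial. First, it is self-contained, whereas the paper's proof leans on Lemma~\ref{comm}, whose statement read literally (membership in $G_k$ plus all even indexes) is too weak: for $k=2$ the element $(1,2)(3,4)\in Syl_2A_4$ has trivial root v.p.\ and index $2$ on $X^1$, hence all even indexes, yet $Syl_2A_4\cong C_2\times C_2$ is abelian, so this element is not in $(Syl_2A_4)'$; the missing requirement --- even index on each half $X_1$, $X_2$ of the last level separately --- is exactly your condition $g_1,g_2\in G_{k-1}$, and it surfaces only inside the proof of Lemma~\ref{comm}, not in its statement. Your witness $(t,t^{-1})\in(B_k'\cap G_k)\setminus G_k'$ is precisely this phenomenon, so your argument simultaneously proves the lemma and pinpoints why $G_k'\subsetneq B_k'\cap G_k$, a distinction the paper's wording blurs. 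Second, the paper's proof of this lemma cites the later Lemma~\ref{comm}, whose own proof in turn remarks that it ``follows from Lemmas~\ref{L_k_comm_criteria} and~\ref{comm B_k old}''; your independent derivation breaks this near-circularity, and applied inductively it would yield a corrected form of Lemma~\ref{comm}. One microscopic slip: in the case $i=j=1$ the sign of a component is $\mathrm{sgn}(a_1a_2)\,\mathrm{sgn}(b_1b_2)$, the product of the two rather than one of them --- both are trivial, so the conclusion is unaffected.
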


\begin{proof}
Indeed, if $(g_1, g_2) \in G_k'$ then indexes of $g_1$ and $g_2$ on $X^{k-1}$ are even according to Lemma \ref{comm} thus, $g_1,g_2 \in G_{k-1}$. A sum of indexes of $g_1$ and $g_2$ on $X^{l}$, $l<k-1$ are even according to Lemma \ref{comm} too, so index of product $g_1 g_2$ on $X^{l}$ is even. Thus, $g_1g_2\in B_{k-1}'$.

Let us prove the sufficiency via Lemma \ref{comm}.
Wise versa, if $g_1,g_2 \in G_{k-1}$ then indexes of these automorphisms on $X^{k-2}$ of subtrees $v_{11}X^{[k-1]}$ and $v_{12}X^{[k-1]}$ are even as elements from $ G_k'$ have.  
 The product $g_1g_2$ belongs to $B_{k-1}'$ by condition of this Lemma so sum of indexes of $g_1, g_2$ on any level $X^l$,  $0 \leq l<k-1 $ is even. Thus, the characteristic properties of $G_k'$ described in this Lemma \ref{comm} holds.
\end{proof}

Let $X_1=\{v_{k-1,1}, v_{k-1,2},..., v_{k-1,2^{k-2}} \} $ and $X_2=\{v_{k-1,2^{k-2}+1}, ..., v_{k-1,2^{k-1}} \}$.
Let group $Sy{{l}_{2}}{{A}_{{{2}^{k}}}}$ acts on $X^{[k]}$.
\begin{lemma} \label{comm}
An element $g$ belongs to $G_k' \simeq (Syl_2{A_{2^k}})'$ iff $g$ is arbitrary element from $G_k$ which has all even indexes.



 \end{lemma}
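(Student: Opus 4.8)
The plan is to deduce the statement directly from the recursive description of $G_k'$ in Lemma~\ref{L_k_comm_criteria}, the description of $B_{k-1}'$ in Lemma~\ref{comm B_k old}, and Proposition~\ref{B_k_criteria}, after first recording one elementary but crucial fact about how indexes behave under multiplication. Namely, for each level $l$ the assignment $\phi_l\colon Aut X^{[k]}\to C_2$ sending an automorphism to the parity of its index on $X^l$ is a group homomorphism: writing the section rule as $s_v(gh)=s_{h(v)}(g)+s_v(h)$ and summing over all $v\in X^l$, the reindexing $v\mapsto h(v)$ (a permutation of $X^l$) gives $\phi_l(gh)=\phi_l(g)+\phi_l(h)$. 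Two consequences will be used repeatedly: the index of a product on any level equals the mod~$2$ sum of the indexes of the factors, and every commutator has even index on every level, so that $G_k'$ is automatically contained in the set of elements of $G_k$ with all even indexes.

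For both implications I decompose $g$ by the first level as a wreath recursion $g=(g_1,g_2)\sigma^i$ with $g_1,g_2\in B_{k-1}=Aut X^{[k-1]}$, and I use that, by Lemma~\ref{even}, membership in $G_{k-1}$ is equivalent for an element of $B_{k-1}$ to having an even index on its top level $X^{k-2}$. For the forward direction, if $g\in G_k'$ then Lemma~\ref{L_k_comm_criteria} gives $g_1,g_2\in G_{k-1}$ and $g_1g_2\in B_{k-1}'$, and since $G_k'$ fixes the first level (the quotient $G_k/St_{G_k}(1)\cong C_2$ is abelian) we have $i=0$. The condition $g_1,g_2\in G_{k-1}$ makes the index of $g$ even on each of the two halves $X_1$ and $X_2$ of $X^{k-1}$, while $g_1g_2\in B_{k-1}'$ together with Lemma~\ref{comm B_k old} and the homomorphism property forces the index of $g$ to be even on every level $X^1,\dots,X^{k-2}$; with $i=0$ this yields all even indexes.

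For the converse I run the same equivalences in reverse: an even index on $X^0$ gives $i=0$, so $g=(g_1,g_2)$ lies in the first-level stabiliser; an even index on $X_1$ gives $g_1\in G_{k-1}$ and an even index on $X_2$ gives $g_2\in G_{k-1}$; and the even indexes of $g$ on the levels $X^1,\dots,X^{k-1}$ translate, via $\phi_l$, into even indexes of $g_1g_2$ on all levels below its top, whence $g_1g_2\in B_{k-1}'$ by Lemma~\ref{comm B_k old}. Lemma~\ref{L_k_comm_criteria} then gives $g\in G_k'$. The induction on $k$ is needed only to keep the statement of Lemma~\ref{L_k_comm_criteria} available at the lower rank; the combinatorial content does not recurse further.

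The step I expect to be the real obstacle, and the place where the statement has to be read carefully, is the top level $X^{k-1}$. There ``all even indexes'' must mean even index on each half $X_1$ and $X_2$ \emph{separately}, not merely an even total over $X^{k-1}$; the latter is exactly the defining condition for $g\in G_k$ coming from Proposition~\ref{B_k_criteria} (that $g_1g_2\in G_{k-1}$) and is strictly weaker. Indeed an element of type~\texttt{T}, which has an odd index on each of $X_1$ and $X_2$ but an even total, lies in $G_k$ and has even index on every level in the coarse sense, yet is \emph{not} in $G_k'$; it is precisely such elements that the condition $g_1,g_2\in G_{k-1}$ (as opposed to only $g_1g_2\in G_{k-1}$) excludes. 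Making this distinction explicit, and showing that it is equivalent to requiring $g_1,g_2\in G_{k-1}$ individually, is the crux of the argument.
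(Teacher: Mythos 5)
Your reading of the statement and your preparatory observations are sound: the parity of the index on each level is additive under multiplication (your homomorphism $\phi_l$), hence $G_k'$ is contained in the set of all-even-index elements; and on the top level ``all even indexes'' must indeed be read as even index on each of $X_1$ and $X_2$ \emph{separately}, which is exactly what excludes elements of type \texttt{T} (for $k=2$ this is the difference between $G_2'=\{e\}$ and the even-total reading, which would wrongly admit $(1,2)(3,4)$). The paper's own proof enforces the same reading through its $X_1$/$X_2$ analysis. Moreover, as a formal deduction from Lemma \ref{L_k_comm_criteria}, Lemma \ref{comm B_k old}, Lemma \ref{even} and Proposition \ref{B_k_criteria}, both of your implications are correctly argued.

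The genuine gap is circularity. In this paper Lemma \ref{L_k_comm_criteria} is not an independent resource: both directions of its proof are explicitly derived \emph{via} Lemma \ref{comm}, i.e.\ from the very statement you are proving (the paper's logical order is Lemma \ref{comm} first, by a direct argument, and Lemma \ref{L_k_comm_criteria} as a consequence, despite its earlier position in the text). Your proposal inverts that order, so once spliced into the paper the two lemmas would support each other with no ground to stand on; what you have actually established is only that they are equivalent modulo Lemma \ref{comm B_k old} and the parity homomorphism. (The paper itself flags your route in the last sentence of its proof, but only as a remark after the direct argument.) The substantive content you omit is the hard inclusion: showing that \emph{every} element of $G_k$ with the prescribed even-index pattern really is a product of commutators. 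The paper does this constructively, taking $\theta\in G_k$ with arbitrary index $x$ on a level and conjugating by suitable $\alpha$ so that $(\alpha\theta\alpha^{-1})\theta^{-1}$ realizes any even index $y$, $0\le y\le 2x$, at arbitrary positions, and then checking that the resulting set is closed under multiplication and conjugation. Nor can this be sidestepped by Meldrum's Theorem \ref{form of comm}: that theorem describes $B_k'$, and $B_k'\neq G_k'$ (already for $k=2$ one has $B_2'=\{e,(1,2)(3,4)\}$ while $G_2'=\{e\}$), so the passage to the index-2 subgroup $G_k$ is precisely where the work lies. To salvage your route you would first need an independent proof of Lemma \ref{L_k_comm_criteria}, and any such proof would essentially reproduce the paper's direct argument.
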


\begin{proof}

Let us prove the ampleness by induction by a number of level $l$.
Recall that any authomorphism $\theta \in Syl_2 A_n$ has an even index on $X^{k-1}$ so number parities of active v. p. on ${{X}_{1}}$ and on ${{X}_{2}}$ are the same.
Conjugation by automorphism $\alpha$ from $Aut{{v}_{11}}{{X}^{\left[ k-1 \right]}}$ of automorphism $\theta $, that has some number $x:$ $1 \leq x \leq 2^{k-2}$ of active v. p. on ${{X}_{1}}$ does not change $x$. Also automorphism $\theta^{-1} $ has the same number $x$ of v. p. on $X_{k-1}$ as $\theta $ has. If $\alpha$ from $Aut{{v}_{11}}{{X}^{\left[ k-1 \right]}}$ and $ \alpha \notin Aut{{X}^{\left[ k \right]}}$ then conjugation $(\alpha \theta {{\alpha }^{-1}})$ permutes vertices only inside $X_1$ ($X_2$).

 Thus, ${\alpha }\theta {\alpha^{-1} }$ and $\theta$ have the same parities of number of active v.p. on $X_1$ ($X_2$). Hence, a product ${\alpha }\theta {\alpha^{-1} } \theta^{-1}$ has an even number of active v.p. on $X_1$ ($X_2$) in this case. More over  a coordinate-wise sum by \texttt{mod2} 
  of active v. p. from $(\alpha \theta {{\alpha }^{-1}})$ and $\theta^{-1}$ on $X_1$ ($X_2$) is even and equal to $y:$ $0 \leq y \leq 2x$.


   If conjugation by $\alpha$ permutes sets $X_1$ and $X_2$ then there are  coordinate-wise sums of no trivial v.p. from $\alpha \theta \alpha^{-1} \theta^{-1}$ on $X_1$ (analogously on $X_2$) have form: \\ $({{s}_{k-1,1}}(\alpha \theta {{\alpha }^{-1}}),..., {{s}_{k-1, 2^{k-2}}}(\alpha \theta {{\alpha }^{-1}}) )\oplus ({{s}_ {k-1,1}}(\theta^{-1}), ..., {{s}_{k-1,{{2}^{k-2}}}}(\theta^{-1} ))$.
   This sum has even number of v.p. on $X_1$ and $X_2$ because $(\alpha \theta {{\alpha }^{-1}})$ and ${{\theta }^{-1}}$ have a same parity of no trivial v.p. on $X_1$ ($X_2$).  Hence, $(\alpha \theta {{\alpha }^{-1}}){{\theta }^{-1}}$ has even number of v.p. on ${{X}_{1}}$ as well as on ${{X}_{2}}$.


An authomorphism $\theta $ from $G_k$ was arbitrary so number  of active v.p. $x$ on $X_1$ is arbitrary. And ${\alpha }$ is arbitrary from $AutX^{[k-1]}$ so vertices can be permuted in such way that the commutator $[{\alpha },\theta]$ has arbitrary even number $y$ of active v.p. on $X_1$, $0 \leq y \leq 2x$.

 A conjugation of an automorphism $\theta $ having arbitrary index $x$, $1 \leq x \leq 2^{l}$ on ${{X}^{l}}$  by different $\alpha \in Aut{{X}^{[k]}}$  gives us all permutations of active v.p. that $\theta $ has on ${{X}^{l}}$.
 So multiplication $(\alpha \theta {{\alpha }^{-1}})\theta $ generates a commutator having index $y$  equal to coordinate-wise sum by $mod 2$ of no trivial v.p. from vectors $({{s}_{l1}}(\alpha \theta {{\alpha }^{-1}}),{{s}_{l}}_{2}(\alpha \theta {{\alpha }^{-1}}),...,{{s}_{l{{2}^{l}}}}(\alpha \theta {{\alpha }^{-1}}))\oplus ({{s}_{l1}}(\theta ),{{s}_{l}}_{2}(\theta ),...,{{s}_{l{{2}^{l}}}}(\theta ))$  on ${{X}^{l}}$. A indexes parities of  $\alpha \theta {{\alpha }^{-1}}$  and  ${{\theta }^{-1}}$ are same so their sum by $mod 2$ are even.  Choosing $\theta $ we can  choose an arbitrary index $x$ of $\theta $ also we can choose arbitrary $\alpha $ to make a permutation of active v.p. on ${{X}^{l}}$.  Thus, we obtain an element with arbitrary even index on ${{X}^{l}}$ and arbitrary location of active v.p. on ${{X}^{l}}$.

Check that property of number parity of v.p. on ${{X}_{1}}$  and on ${{X}_{2}}$  is closed with respect to conjugation. We know that numbers of active v. p. on ${{X}_{1}}$ as well as on ${{X}_{2}}$ have the same parities. So
action by conjugation only can permutes it, hence, we again get the same  structure of element. Conjugation by automorphism $\alpha $ from  $Aut{{v}_{11}}{{X}^{\left[ k-1 \right]}}$  automorphism $\theta $, that has odd number of  active v. p. on ${{X}_{1}}$  does not change its parity.
Choosing the $\theta $ we can choose arbitrary index $x$ of
$\theta $ on ${{X}^{k-1}}$ and number of active v.p. on ${{X}_{1}}$  and  ${{X}_{2}}$  also we can choose arbitrary $\alpha $ to make a permutation active v.p. on ${{X}_{1}}$  and  ${{X}_{2}}$. Thus, we can generate all possible elements from a commutant. Also this result follows from Lemmas \ref{L_k_comm_criteria} and \ref{comm B_k old}.

Let us check that the set of all commutators $K$ from $Syl_2 A_{2^k}$ is closed with respect  to multiplication of commutators. Let $\kappa_1, \kappa_2 \in K$ then $\kappa_1 \kappa_2$ has an even index on $X^l$, $l<k-1$ because  coordinate-wise sum $({{s}_{l,1}}(\kappa_1),..., {{s}_{k-1, 2^l}}(\kappa_1) )\oplus ({{s}_ {l,\kappa_1(1)}}(\kappa_2), ..., {{s}_{l,\kappa_1({{2}^{l}})}}(\kappa_2 ))$.
 of two $2^l$-tuples of v.p. with an even number of no trivial coordinate has even number of such coordinate.  Note that conjugation of $\kappa $ can permute sets ${{X}_{1}}$ and ${{X}_{2}}$  so parities of $x_1$ and $X_2$ coincide. It is obviously index of $\alpha \kappa \alpha^{-1}$ is even as well as index of $\kappa $.

Check that a set $K$ is a set  closed with respect  to conjugation.

 Let $\kappa \in K$, then $\alpha \kappa {{\alpha }^{-1}}$  also belongs to $K$, it is so because  conjugation does not change index of an automorphism on a level. Conjugation only  permutes vertices on level because elements of $Aut{{X}^{\left[ l-1 \right]}}$ acts  on vertices of  ${{X}^{l}}$. But as it was proved above elements  of $K$ have all possible indexes on ${{X}^{l}}$, so as a result of conjugation $\alpha \kappa {{\alpha }^{-1}}$ we obtain an element from $K$.

Check that the set of commutators is closed with respect to multiplication of commutators.
Let $\kappa_1, \kappa_2 $ be an arbitrary commutators of $G_k$. The parity of the number of vertex permutations on $X^l$ in the product $\kappa_1 \kappa_2 $  is determined exceptionally by the parity of the numbers of active v.p. on ${{X}^{l}}$ in $\kappa_1$ and $\kappa_2$ (independently from the action of v.p. from the higher levels). Thus $\kappa_1 \kappa_2 $ has an even index on $X^l$.

 Hence, normal closure of the set $K$ coincides with $K$.
\end{proof}

\begin{statment} \label{comm}
Frattini subgroup $ \phi(G_k)= {{G_k}^{2}}\cdot [G_k,G_k]= {{G_k}^{2}} $ acts by all even permutations on ${{X}^{l}},\,\,\,0\le l<k-1$
and by all even permutations on $ X^{k}$ except for those  from \texttt{T}.
\end{statment}
\begin{proof}
Index of the automorphism $\alpha^2 $, $\alpha  \in {{S}_{\beta }}$ on $X^l$ is always even. Really the parity of the number of vertex permutations at $X^l$ in the product $({\alpha }_i {\alpha }_j)^2$, ${\alpha }_i, {\alpha }_j \in  S_{\alpha }$, $i,j<k$ is determined exceptionally by the parity of the numbers of active states of v.p. on ${{X}^{l}}$ in $\alpha $ and $\beta $ (independently of the action of v.p. from the higher levels). On $X^{k-1}$ group $G^2$ contains all automorphisms of form $\tau_{1 i},  i\leq 2^{k-1}$ which can be generated in such way $({\alpha }_{k-2} \tau_{12})^2= \tau_{1234}$, $\tau_{12}\tau_{1234} = \tau_{34}$,  $({\alpha }_{k-i} \tau_{12})^2= \tau_{1, 2, 1+2^{k-i}, 2+2^{k-i}}$ then $\tau_{1, 2, 1+2^{k-i}, 2+2^{k-i}} \tau_{12} =\tau_{1+2^{k-i}, 2+2^{k-i}}$. In such way we get set of form ${\tau_{12}, \tau_{23},, \tau_{34}, ... ,\tau_{2^{k-1}-1,2^{k-1}}}$. This set is the base for $W_{k-1}$.

The parity of the number of vertex permutations at $X^l$ in the product ${\alpha }_i$ or ${\alpha }_i {\alpha }_j$, ${\alpha }_i,{\alpha }_j \in  S_{\alpha }$) is determined exceptionally by the parity of the numbers of active v.p. on ${{X}^{l}}$ in $\alpha $ and $\beta $ (independently of the action of v.p. from the higher levels). Thus $[\alpha ,\beta ]=\alpha \beta {\alpha }^{-1}{\beta }^{-1}$ has an even number of v. p. at each level. Therefore, the commutators of the generators from ${{S}_{\alpha }}$ and elements from $G^2$ generate only the permutations with even number of v. p. at each ${{X}^{l}}$, ($0\le l\le k-2$).

Let us consider ${{\left( {{\alpha }_{0}}{{\alpha }_{l}} \right)}^{2}}={{\beta }_{l({{1,2}^{l-1}}+1)}}$. Conjugation by the element ${{\beta }_{1(1,2)}}$ (or ${{\beta }_{i(1,2)}},\,\,0<i<l$)  give us ability to express arbitrary coordinate $x:\,\,1\le x \leq {{2}^{l-1}}$ where $x=2^{k-1}-i$,
 i.e. from the element ${{\beta }_{l({{1,2}^{l-1}}+1)}}$ we can express ${{\beta }_{l(x{{,2}^{l-1}}+1)}}$. For instance $x={{2}^{j-1}}+1$, $j<l$: ${{\beta }_{l-j(1,2)}}{{\beta }_{l({{1,2}^{l-1}}+1)}}{{\beta }_{l-j(1,2)}}={{\beta }_{l({{2}^{j-1}+1},{{2}^{l-1}}+1)}}$. If $x={{2}^{l-j}}+2$ than to  realize every shift on $x$ on set $X^l$ 
the element
${{\beta }_{l({{1,2}^{l-1}}+1)}}$ should to be conjugated by such elements ${{\beta }_{l-j(1,2)}} {{\beta }_{l-1(1,2)}}$. So in such way can be realized every ${{\beta }_{l(x{{,2}^{l-1}}+1)}}$ and analogously every  ${{\beta }_{l({{2}^{l-1},y})}}$ and ${{\beta }_{l(x,y)}}$. Hence we can express from elements of $G^2$ every even number of active states of v.p. on $X^l$.
\end{proof}

Define the subgroup $G(l)<Aut{{X}^{[k]}}$, where $l< k$, as $Stab_{Aut{{X}^{[k]}}}(l)\left| _{{{X}^{l}}} \right.$. It is plain, that $G(l) \simeq  ^{Stab_{Aut{{X}^{[k]}}}(l)} /_{ Stab_{Aut{{X}^{[k]}}}(l+1)}$ because $Stab_{Aut{{X}^{[k]}}}(l+1)$ is normal subgroup of finite index in $Aut{{X}^{[k]}}$ \cite{Ne}.
Let us construct a homomorphism from $G(l)$ onto ${{C}_{2}}$ in the following way: $\varphi (\alpha )=\sum\limits_{i=1}^{{{2}^{l}}}{{{s}_{li}}(\alpha )}\bmod 2$. Note that $\varphi (\alpha \cdot \beta )=\varphi (\alpha )\circ \varphi (\beta )=(\sum\limits_{i=1}^{{{2}^{l}}}{{{s}_{li}}(\alpha )}+\sum\limits_{i=1}^{{{2}^{l}}}{{{s}_{li}}(\beta )})mod2$.

Structure of subgroup $G_{k}^{2}{{G}_{k}}'\triangleleft \underset{1}{\overset{k}{\mathop{\wr }}}\,{{S}_{2}}\simeq Aut{{X}^{[k]}}$ can be described in next way. This subgroup contains the commutant ${G}_{k}'$. So it has on each ${{X}^{l}},\,\,\,0\le l<k-1$ all even indexes that can exists there. On $X^{k-1}$ it does not exist v.p. of type \texttt{T}, which has the distance $2k-2$, rest of even the indexes are present on ${X}^{k-1}$. It's so, because the sets of elements of types \texttt{T} and \texttt{C} are not closed under operation of calculating the even power as it proved in Lemma \ref{About not closed set of element of type T}.
Thus, the squares of the elements don't belong to \texttt{T} and \texttt{C} (because they have the distance, which is less than $2k-2$).
This implies the following corollary.
\begin{corollary} \label{qoutient} A quotient group ${}^{{{G}_{k}}}/{}_{G_{k}^{2}{{G}^{'}_{k}}}$ is isomorphic to $\underbrace{{{C}_{2}}\times {{C}_{2}}\times ...\times {{C}_{2}}}_{k}$.
\end{corollary}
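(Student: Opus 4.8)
The plan is to exhibit an explicit surjective homomorphism $\psi$ from $G_k$ onto $\underbrace{C_2\times\cdots\times C_2}_{k}$ whose kernel is exactly the Frattini subgroup $G_k^2G_k'$, and then to close the argument by a dimension count. Recall that for the $2$-group $G_k$ one has $\Phi(G_k)=G_k^2G_k'$, so by the Burnside basis theorem $G_k/\Phi(G_k)$ is elementary abelian and its dimension over the two-element field equals the minimal number of generators of $G_k$. By Theorem \ref{isomor} (a generating set of $k$ elements exists) together with the minimality recorded in Lemma \ref{rk} and Theorem \ref{Th about general relation} (no generating set of size $k-1$ exists), this number is exactly $k$, so a priori $G_k/\Phi(G_k)\simeq (C_2)^k$; the homomorphism below makes the isomorphism concrete and matches the structural description of $G_k^2G_k'$ obtained above.

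First I would put $\psi=(\varphi_0,\ldots,\varphi_{k-2},\chi)$, where $\varphi_l(g)=\sum_{i=1}^{2^l}s_{li}(g)\bmod 2$ is the parity of the index of $g$ on the level $X^l$ for $0\le l\le k-2$, and $\chi(g)=\sum_{i=1}^{2^{k-2}}s_{k-1,i}(g)\bmod 2$ is the parity of the number of active v.p. of $g$ on the left half $X_1$ of $X^{k-1}$. Each $\varphi_l$ is a homomorphism onto $C_2$, exactly as was checked above for the maps on $G(l)$. The map $\chi$ is well defined precisely because every element of $G_k$ acts by an even permutation and hence has even index on $X^{k-1}$, so its parities on $X_1$ and on $X_2$ coincide.

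Next I would verify that $\psi$ is a surjective homomorphism whose kernel contains the Frattini subgroup. Surjectivity is read off from the generators: $\varphi_l(\alpha_i)=\delta_{li}$ and $\chi(\alpha_i)=0$, so $\psi(\alpha_i)$ is the $i$-th standard basis vector for $0\le i\le k-2$, while $\tau$ has its two active v.p. at $v_{k-1,1}\in X_1$ and $v_{k-1,2^{k-1}}\in X_2$ and none on lower levels, so $\psi(\tau)$ is the last basis vector; thus $\psi(S_\alpha)$ is a basis of $(C_2)^k$. To see that $\Phi(G_k)\subseteq\ker\psi$, note that $(C_2)^k$ is abelian of exponent $2$, whence $\psi([g,h])=0$ and $\psi(g^2)=2\psi(g)=0$ for all $g,h$, so $G_k'\subseteq\ker\psi$ and $G_k^2\subseteq\ker\psi$. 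Consequently $\psi$ factors through a surjection $G_k/\Phi(G_k)\to(C_2)^k$, and since $|G_k/\Phi(G_k)|=2^k$ this surjection is an isomorphism, giving $\ker\psi=\Phi(G_k)$ and the corollary.

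The hard part will be confirming that $\chi$ is genuinely a homomorphism, since the action of the top levels of the tree can interchange $X_1$ and $X_2$ during multiplication. The point is that the v.p.-vector of a product $gh$ on $X^{k-1}$ is the vector of $g$ added coordinate-wise to the vector of $h$ permuted by the action of $g$ on $X^{k-1}$, and this permutation either fixes or interchanges $X_1$ and $X_2$; in both cases the parity on $X_1$ is additive, because the parities of any element of $G_k$ on $X_1$ and on $X_2$ agree. This is exactly the feature that the distance-$2k-2$ portraits of type \texttt{T} survive in the quotient while all lower-level even-index data is annihilated, which is the content recorded in the description of the Frattini subgroup $G_k^2G_k'$ given above and in Lemma \ref{About not closed set of element of type T}.
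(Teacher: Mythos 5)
Your homomorphism is the same one the paper builds: its proof also assembles the tuple $(\varphi_0,\ldots,\varphi_{k-2},\phi_{k-1})$, where the $\varphi_l$ are the level-parity maps and $\phi_{k-1}$ agrees on $G_k$ with your $\chi$ (the paper takes the product of the parities on $X_1$ and $X_2$, which equals the common value since these parities coincide for every element of $G_k$), and your verification that $\chi$ is additive under the possible interchange of $X_1$ and $X_2$ is exactly the point the paper labors over. Where you genuinely diverge is in identifying the kernel. The paper proves directly that $\ker\psi=G_k^2G_k'$, which is the hard half of its argument and forces it to invoke the structural description of $G_k^2$ and $G_k'$ (Statement \ref{comm} and the accompanying logarithm/coset discussion). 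You bypass that computation: the inclusion $\Phi(G_k)\subseteq\ker\psi$ is trivial because the target is elementary abelian, and you finish with a dimension count. That finish is cleaner, but the way you obtain $\left|G_k/\Phi(G_k)\right|=2^k$ --- Burnside basis theorem plus the minimality results (Lemma \ref{rk}, Theorem \ref{Th about general relation}) --- imports more than you need, and it costs something the paper cares about: the paper explicitly offers this corollary as ``another way to prove the minimality'' of the $k$-element generating set, and a proof of the corollary that presupposes minimality cannot serve that purpose. The repair is immediate and makes your route strictly better: you only need the upper bound $\left|G_k/\Phi(G_k)\right|\le 2^k$, which follows from the mere existence of a $k$-element generating set (Theorem \ref{isomor}), since a quotient of a $k$-generated group is $k$-generated and $G_k/\Phi(G_k)$ is elementary abelian; your surjection $\psi$ supplies the lower bound $\ge 2^k$. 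With that substitution your argument is self-contained modulo Theorem \ref{isomor}, avoids the paper's kernel computation altogether, and yields $\mathrm{rk}(G_k)=k$ as a consequence rather than consuming it as a hypothesis.
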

\begin{proof}
The proof is based on two facts $G_{k}^{2}{{G}^{'}_{k}}\simeq G_{k}^{2}\triangleleft {{G}_{k}}$ and $\left| G:G_{k}^{2}{{G}^{'}_{k}} \right|=2^k$.
Construct a homomorphism from $G_k(l)$ onto ${{C}_{2}}$ in the following way: $\varphi (\alpha )=\sum\limits_{i=1}^{{{2}^{l}}}{{{s}_{li}}(\alpha )}\bmod 2$. Note that $\varphi (\alpha \cdot \beta )=\varphi (\alpha )\circ \varphi (\beta )=(\sum\limits_{i=1}^{{{2}^{l}}}{{{s}_{li}}(\alpha )}+\sum\limits_{i=1}^{{{2}^{l}}}{{{s}_{li}}(\beta )})mod2$, where $\alpha ,\,\,\beta \in Aut{{X}^{[n]}}$.
Index of $\alpha \in G_{k}^{2}$ on ${{X}^{l}},\,l<k-1$ is even but index of  $\beta \in {{G}_{k}}$ on ${{X}^{l}}$ can be both even and odd. Note that ${{G}_{k}}(l)$ is abelian group and $G_{k}^{2}(l)\trianglelefteq {{G}_{k}}$.
  Since words with equal logarithms to all bases \cite{K} belong to distinct cosets of the commutator, the subgroup $G_{k}^{2}(l)$ is the kernel of this mapping. Also we can use homomorphism $\varphi $  which is described above and denote it as ${{\varphi }_{l}}$, to map ${{G}_{k}}(l)$ onto ${{C}_{2}}$ the $\ker {{\varphi }_{l}}=G_{k}^{2}(l)$. Really if $\alpha $ from ${{G}_{k}}(l)$ has odd number of active states of v.p. on ${{X}^{l}},\,\,\,l<k-1$ than  ${{\varphi }_{l}}(\alpha )=1$ in ${{C}_{2}}$ otherwise if this number is even than $\alpha $ from $\ker {{\varphi }_{i}}$ so ${{\varphi }_{l}}(\alpha )=0$ hence $\ker {{\varphi }_{l}}=G_{k}^{2}(l)$.
   So ${}^{{{G}_{k}}(l)}/{}_{G_{k}^{2}(l)}={{C}_{2}}$ analogously ${}^{{{B}_{k}}(l)}/{}_{B_{k}^{2}(l)}={{C}_{2}}$. Let us  check that mapping  $({{\varphi }_{0}},{{\varphi }_{1}},...,{{\varphi }_{k-2}},{{\phi }_{k-1}})$ is the homomorphism from ${{G}_{k}}$ to $\underbrace{{{C}_{2}}\times {{C}_{2}}\times ...\times {{C}_{2}}}_{k}$.
 By virtue of the fact that we can construct homomorphism ${{\varphi }_{i}}$ from every factor ${G}_{k}(i)$ of this direct product to ${{C}_{2}}$.    The group ${}^{{{G}_{k}}}/{}_{G_{k}^{2}}$  is elementary abelian 2-group because ${{g}^{2}}=e,\,\,g\in G$.

Parity of index of $\alpha \cdot \beta$ on $X^l$ is equal to sum by $\bmod 2$ of indexes of $\alpha$ and $\beta$ hence
$\varphi_l (\alpha \cdot \beta )=\left( \varphi_l (\alpha )+\varphi_l (\beta ) \right)$ because multiplication $\alpha \cdot \beta $ in ${{G}_{k}}$ does not change a parity of index of $\beta $, $\beta \in {{G}_{k}}$ on ${{X}^{l}}$.

 Really action of element of active group $A=\underbrace{{{C}_{2}}\wr {{C}_{2}}\wr ...\wr {{C}_{2}}}_{l-1}$  from wreath power $(\underbrace{{{C}_{2}}\wr {{C}_{2}}\wr ...\wr {{C}_{2}}}_{l-1})\wr {{C}_{2}}$ on element from passive subgroup ${{C}_{2}}$ of second multiplier from product $gf,\,\,g,f\in (\underbrace{{{C}_{2}}\wr {{C}_{2}}\wr ...\wr {{C}_{2}}}_{l-1})\wr {{C}_{2}}$ does not change a parity of index of $\beta $ on ${{X}^{l}}$, if index of $\beta $ was even then under action  it stands to be even and the sum $\varphi (\alpha )\bmod 2+\varphi (\beta )\bmod 2$ will be equal to $(\varphi (\alpha )+\varphi (\beta ))\bmod 2$,  hence it does not change a $\varphi (\beta )$. Since words with equal logarithms to all bases \cite{K} belong to distinct cosets of the commutator, the subgroup $G_{k}^{2}(l)$ is the kernel of this mapping.
Let us define the permutations of the type 2 that act on ${{X}_{1}}$ and ${{X}_{2}}$, where ${{X}_{1}}=\{{{v}_{k,1}},...,{{v}_{k{{,2}^{k-1}}}}\},\,\,{{X}_{2}}=\{{{v}_{k{{,2}^{k-1}}+1}},...,{{v}_{k{{,2}^{k}}}}\},\,{{X}_{1}}\cup {{X}_{2}}={{X}^{k}}$  only by even permutations. Subgroup $G_{k}^{2}G{{'}_{k}}$ acts only by permutations of type 2 on ${{X}_{1}}$, ${{X}_{2}}$, according to Statement \ref{comm}.

The restriction ${{\left. G_{k}^{2} \right|}_{{{X}^{[k-1]}}}}$ acts only by permutations of the second type (elements of it form a normal subgroup in ${{G}_{k}}$) by parity of permutation on sets  ${{X}_{1}}$ and ${{X}_{2}}$. A permutation of Type 1, where on ${{X}_{1}}$ and ${{X}_{2}}$  the group ${{G}_{k}}$  can acts by odd as well as by even permutations but in such way to resulting permutation on ${{X}^{k}}$ is always even. The number of active states from subgroup ${{G}_{k}}(k-1)$ on ${{X}^{k-1}}$ can be even as well as odd.
It means that on set of vertices of ${{X}^{k-1}}$  over ${{X}_{1}}$ i.e. vertices that are connected by edges with vertices of ${{X}^{k-1}}$  over ${{X}_{1}}$ automorphism of ${{G}_{k}}$ can contains odd number of active states (and ${{X}_{2}}$ analogously).
Hence for a subgroup $G(k-1)\simeq W_{k-1}$ such that has the normal subgroup ${{G}_{k}}^2({k-1})\triangleleft {{G}_{k}}({k-1})$ we construct a homomorphism: $ {{\phi }_{k-1}} \left( {{G}_{k}}({{X}_{k}}) \right)\to {{C}_{2}} \simeq {}^{{G}_{k}}({k-1})/{}_{ G^2_k (k-1)} $ as product of sum by $mod2$ of active states (${{s}_{k-1,i}}\in \{0,1\}$, $0<j\le {{2}^{k-2}}$ if ${{v}_{ij}}\in {{X}_{1}}$ and $X_2$ corespondently)
   on each set ${{X}_{1}}$ and ${{X}_{2}}$:  $\phi_{\alpha} ({{X}_{1}})\,\, \cdot \,\,\phi_{\alpha} ({{X}_{2}})=\sum\limits_{i=1}^{{{2}^{k-2}}}{{{s}_{k-1,i}}(\alpha)}(\bmod 2) \cdot \sum\limits_{i={{2}^{k-2}}+1}^{{{2}^{k-1}}}{{{s}_{k-1,i}}(\alpha)} (\bmod 2)$.
  Where ${{s}_{k-1,i}}(\alpha)=1$ if  there is active state  in ${{v}_{k-1,i}}, \, i<2^{k-1}+1$ and ${{s}_{k-1,i}}(\alpha)=0$ if there is no active state. It follows from structure of ${{G}_{k}}$ that $\phi_{\alpha} ({{X}_{1}})\,\,=\,\,\phi_{\alpha} ({{X}_{2}})$ so it is 0 or 1. But $G_{k}^{2}{{G}^{'}_{k}}$ admits only permutations of Type 2 on ${{X}^{k}}$ so $G_{k}^{2}{{G}_{k}}'({{X}_{k}})\triangleleft {{G}_{k}}({{X}_{k}})$ because it holds a conjugacy and it is a kernel of mapping from ${{G}_{k}}({{X}_{k}})$ onto ${{C}_{2}}$.

Hence for a subgroup $W_{k-1}$ such that has the normal subgroup ${{G}_{k}}^{2}(k-1)\triangleleft W_{k-1}$ it was constructed a homomorphism:  ${{\phi }_{k-1}}\left( W_{k-1} \right)\to {{C}_{2}}\simeq {{G}_{k}}(k-1){{/}_{G_{k}^{2}(k-1)}}$ as product of sum by $mod2$ of active states from $X_1$ and $X_2$.
 As the result we have ${}^{{{G}_{k}}}/{}_{G_{k}^{2}}\simeq \underbrace{{{C}_{2}}\times {{C}_{2}}\times ...\times {{C}_{2}}}_{k}$.
\end{proof}
Considering that it was proved in Theorem 1 and Theorem 2 that ${{G}_{k}}\simeq A_{2^k}$ we can formulate next Corollary.
   \begin{corollary} The group $Syl_2 A_{2^k}$ has a minimal generating set with $k$ generators.
\end{corollary}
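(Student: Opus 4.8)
The plan is to pin down $rk(Syl_2 A_{2^k})$ by squeezing it between a matching upper and lower bound, so that the value $k$ is simultaneously attained and minimal. For the upper bound I would simply invoke Theorem \ref{isomor}: the set $S_{\alpha} = \{\alpha_0, \alpha_1, \ldots, \alpha_{k-2}, \tau\}$ generates $G_k \simeq Syl_2(A_{2^k})$ and has cardinality $k$, so $rk(G_k) \le k$. Since it was established in the preceding results that $G_k \simeq Syl_2 A_{2^k}$, it suffices to compute the rank of $G_k$.

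For the lower bound I would appeal to the Burnside basis theorem for finite $p$-groups. As $G_k$ is a finite $2$-group, its Frattini subgroup is $\Phi(G_k) = G_k^2[G_k,G_k] = G_k^2 G_k'$, and the minimal number of generators of $G_k$ equals the dimension over $\mathbb{F}_2$ of the elementary abelian quotient $G_k/\Phi(G_k)$. By Corollary \ref{qoutient} this quotient is isomorphic to $\underbrace{C_2 \times C_2 \times \cdots \times C_2}_{k}$, whose $\mathbb{F}_2$-dimension is exactly $k$. Hence $rk(G_k) \ge k$, and combined with the upper bound this gives $rk(G_k) = k$, which is precisely the assertion.

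If one prefers a route that avoids citing the Burnside basis theorem explicitly, the same lower bound follows directly from the chain of earlier combinatorial results: Lemma \ref{rk} forces any generating set containing $S'_\alpha$ to have at least $k-1$ generators, while Corollary \ref{generating pair} and Theorem \ref{Th about general relation} show that a set of size $k-1$ cannot express any element of type \texttt{T} and therefore cannot generate $W_{k-1}$, hence cannot generate $G_k = B_{k-1} \ltimes W_{k-1}$. This is exactly the route indicated by the remark that a generating set of size $k-1$ does not exist. Either way one concludes $rk(G_k) = k$.

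The main obstacle has, in fact, already been overcome before the corollary is even stated: it is the determination of the Frattini quotient in Corollary \ref{qoutient} (equivalently, the impossibility result of Theorem \ref{Th about general relation} concerning elements of type \texttt{T}). Given that computation, the corollary itself is immediate, and the only point requiring care is to state clearly that for a $2$-group the minimality of a generating set is governed by the Frattini quotient, so that exhibiting one generating set of size $k$ together with $\dim_{\mathbb{F}_2} G_k/\Phi(G_k) = k$ settles both existence and minimality at once.
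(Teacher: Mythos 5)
Your proposal is correct and follows essentially the same route as the paper: the paper's own proof likewise rests on Corollary \ref{qoutient}, identifying $G_k/G_k^2G'_k$ as an elementary abelian group of rank $k$, and then invokes the standard theory of $p$-groups (the Burnside basis theorem, via the reference to Rotman) to conclude $rk(G_k)=k$. Your additional remarks --- the explicit upper bound from Theorem \ref{isomor} and the alternative combinatorial route through Lemma \ref{rk} and Theorem \ref{Th about general relation} --- are also consistent with how the paper frames the result around its Main Theorem, so there is no substantive difference in approach.
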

   \begin{proof}
Since quotient group of ${{G}_{k}}$ by subgroup of Frattini $G_{k}^{2}{{G}^{'}_{k}}$  has minimal set of generators from $k$ elements because ${}^{{{G}_{k}}}/{}_{G_{k}^{2}{{G}^{'}_{k}}}$  is isomorphic to linear $p$-space $(p=2)$ of dimension $k$ (or elementary abelian group). Then according to theorems from \cite{Rot} $rk ({{G}_{k}})=k$. It means that $A_{2^k}$ is a group with fixed size of minimal generating set. 
   \end{proof}
 \begin{main_theorem}
The set $S_{\mathop{\beta}}=\{\mathop{\beta}_{0}, \mathop{\beta}_{1}, \mathop{\beta}_{2}, \ldots , \mathop{\beta}_{k-2}, \tau \}$, where $\mathop{\beta}_{i} = \alpha_i$, 
 is a minimal generating set for a group $G_k$ that is isomorphic to Sylow 2-subgroup of $A_{2^{k}}$.
\end{main_theorem}
We have isomorphism of $G_k$ and $Syl_2 (A_{2^k})$ from Theorem \ref{isomor}, the minimality of $S_{\mathop{\beta}}$ following from
Theorem \ref{Th about general relation} which said that $S_{\mathop{\beta}}$ has to contain an element of type \texttt{T}, Theorem 3 and Lemma \ref{rk} about minimal rank.
Another way to prove the minimality of $S_{\mathop{\beta}}$ is given to us by Corollary \ref{qoutient} about quotient by Frattini subgroup.

For example a minimal set of generators for $Syl_2(A_{8})$ can be constructed by following way, for convenience let us consider the next set:

\begin{figure}[h]
\begin{minipage}[h]{1.0\linewidth}
\center{\includegraphics[width=0.99\linewidth]{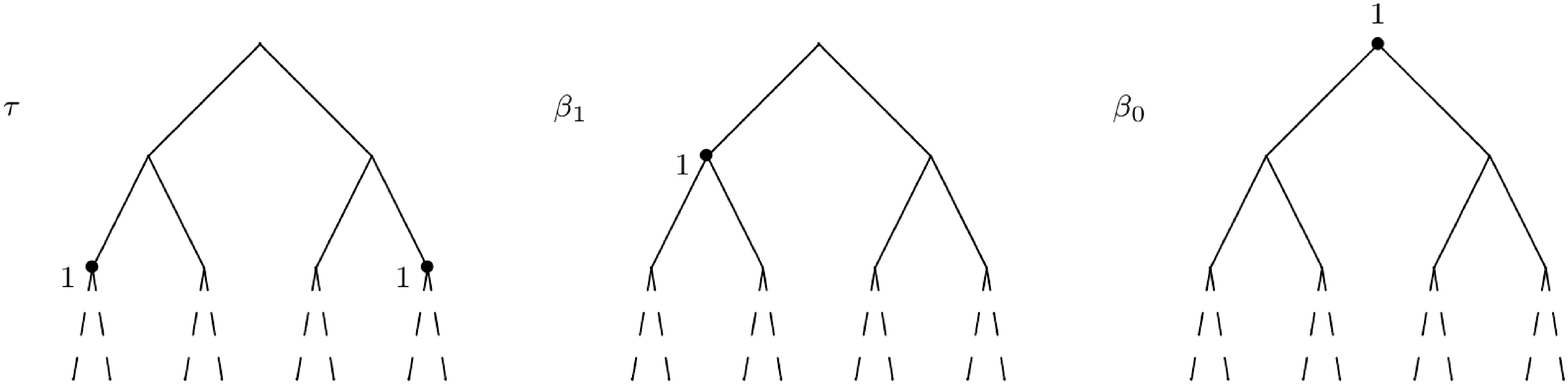} \\ Picture 1.}
\end{minipage}
\end{figure}

  Consequently, in such way we construct second generating set for $A_{2^k}$ of $k$ elements that is less than in \cite{Iv}, and this set is minimal.

We will call \emph{\textbf{diagonal base}} (${{S}_{d}}$) for $Syl_2 S_{2^k} \simeq Aut X^{[k]}$ such generating set that has the property $s_{jx}(\alpha_i)=0$ iff $ i \neq j$, (for $1\leq x\leq 2^j$) and every $\alpha_i, \, i<k$ has odd number of active v.p. A number of no trivial v.p. that can be on ${{X}^{j}}$ is odd the number of ways to chose tuple of no trivial v.p. on ${{X}^{j}}$ for generator from ${{S}_{d}}$ and equal to $2^ {2^{j}}:2=2^ {{2}^{j-1}}$.
Thus, general cardinality of ${{S}_{d}}$ for $Sy{{l}_{2}}{{S}_{{{2}^{k}}}}$ is ${{2}^{{{2}^{k}}-k-1}}$.	
There is minimum one generator of type \texttt{T} in ${{S}_{d}}$ for $Sy{{l}_{2}}{{A}_{{{2}^{k}}}}$. This generator can be chosen not less than in  $C_{{{2}^{k-2}}}^{1}C^1_{{2}^{k-2}}= {({{2}^{k-2}})}^{2} = 2^{2k-4}$ ways.  Thus, total cardinality of ${{S}_{d}}$ for $Sy{{l}_{2}}{{A}_{{{2}^{k}}}}$ is ${{2}^{{{2}^{k-1}}-k-2}}{{({{2}^{k-2}})}^{2}}$.

  And there are $k$ generators in a minimal set of generators, therefore $\left|\phi(G_k)\right| = \left| {{G}_{k}} \right|:{{2}^{k}}$ should be raised to the power of $k$. It equals to ${{(\left| {{G}_{k}} \right|:{{2}^{k}})}^{k}}={{({{2}^{{{2}^{k}}-2}}:{{2}^{k}})}^{k}}={{2}^{k({{2}^{k}}-k-2)}}$. As a result, we have ${{2}^{k({{2}^{k}}-k-1)}} \cdot ({{2}^{k}}-1)({{2}^{k}}-2)({{2}^{k}}-{{2}^{2}})...({{2}^{k}}-{{2}^{k-1}})$.

Let us consider an examples of $Syl_2 A_n$ for a cases $n=4k+r$, where $r\leq 3$.
The structure of $Syl_2A_{12}$ is the same as of the subgroup $H_{12} < Syl_2(S_8) \times Syl_2(S_4)$, for that $[Syl_2(S_8) \times Syl_2(S_4):H_{12}]=2$, $|Syl_2(A_{12})|= 2^{[12/2] + [12/4]+ [12/8]-1} = 2^9$. Also $|Syl_2(S_8)|=2^7$, $|Syl_2(S_4)|=2^3$, so $|Syl_2(S_8) \times Syl_2(S_4)|=2^{10}$ and $|H_{12}|=2^9$, because its index in $Syl_2(S_8) \times Syl_2(S_4)$ is 2. The structure of $Syl_2(A_6)$ is the same as of $H_6 < Syl_2(S_4) \times (C_2)$. Here $H_6 = \{(g,h_g)|g \in Syl_2(S_4), h_g \in C_2\}$, where
\begin{equation}\label{H}
 \begin{cases}
h_g = e, \ \ if \ g|_{X_2} \in Syl_2(A_6), \\
h_g = (5,6), \ if \, g|_{X^2} \in {Syl_2(S_6) \setminus Syl_2 A_6}.
 \end{cases}
\end{equation}
The structure of $Syl_2(A_{6})$ is the same as subgroup $H_6:$ $H_6 < Syl_2(S_4) \times (C_2)$ where $H_6= \{ (g, h) | g\in Syl_2(S_4), h \in  AutX \}$. So last bijection determined by (\ref{H}) giving us $Syl_2 A_{6} \simeq Syl_2 S_{4} $. As a corollary we have $Syl_2 A_{{2^k}+2} \simeq Syl_2 S_{2^k} $.
The structure of $Syl_2(A_{7})$ is the same as of the subgroup $H_7:$ $H_7 < Syl_2(S_4) \times S_2$ where $H_6= \{ (g, h) | g\in Syl_2(S_4), h \in  S_2 \}$ and $h$ depends of $g$:
\begin{equation}\label{HH}
 \begin{cases}
h_g = e, \ \ if \  g|_{X^2}\in  Syl_2  A_7, \\
h_g = (i,j), i,j \in\{ 5,6,7 \},  \ if \, g|_{X^2}\in  {Syl_2 S_7\setminus Syl_2A_7}.
 \end{cases}
\end{equation}
The generators of the group $H_7$ have the form $(g,h), \, \, g\in Syl_2(S_4), \, h\in C_2$, namely: $ \{ {\beta_{0}; \beta_{1}, \tau} \} \cup \{ (5,6) \}$. An element $h_g$ can't be a product of two transpositions of the set: ${(i,j), (j,k), (i,k)}$, where $i,j,k$  $\in\{ 5,6,7 \} $, because $(i,j)(j,k)=(i,k,j)$ but $ord(i,k,j) =3$, so such element doesn't belong to 2-subgroup. In general elements of $Syl_2 A_{4k+3}$ have the structure (\ref{HH}), where $h_g = (i,j), \,\, i,j \in\{ 4k+1, 4k+2, 4k+3 \}$ and $g\in Syl_2 S_{4k}$.

Also $|Syl_2(S_8)|=2^7$, $|Syl_2(S_4)|=2^3$, so $|Syl_2(S_8) \times Syl_2(S_4)|=2^{10}$ and $|H_{12}|=2^9$, because its index in $Syl_2(S_8) \times Syl_2(S_4)$ is 2. The structure of $Syl_2(A_6)$ is the same as of $H_6 < Syl_2(S_4) \times (C_2)$. Here $H_6 = \{(g,h_g)|g \in Syl_2(S_4), h_g \in C_2\}$.

The sizes of this groups are equal, really $|Syl_2(A_7)|= 2^{[7/2] + [7/4]-1}  = 2^3= |H_7|$. In case \, $g|_{L_2}\in  {S_7\setminus A_7}$ we have $C_3^2$ ways to construct one transposition that is direct factor in $H$ which complete $Syl_2 S_4$ to $H_7$ by one transposition  $: \{(5,6); (6,7); (5,7) \}$.

The structure of $Syl_2(A_{2^k+2^l})$ $(k>l)$ is the same as of the subgroup $H_{2^k+2^l} < Syl_2(S_{2^k}) \times Syl_2(S_{2^l})$, for that $[Syl_2(S_{2^k}) \times Syl_2(S_{2^l}):H]=2$. $|Syl_2(A_{2^k+2^l})|= 2^{[(2^k+2^l)!/2] + [(2^k+2^l)!/4]+ .... -1} $.
Here $H = \{(g,h_g)|g \in Syl_2(S_2^k), h_g \in Syl_2(S_2^l\}$, where

\begin{equation}\label{HHH}
 \begin{cases}
h  \in    A_{2^l}, \ \ if \  g|_{X^{k-1}}\in    A_{2^k}, \\
h:  h|_{X^2}\in  {Syl_2 (S_{2^l}) \setminus Syl_2 A_{2^l}},  \ if \, g|_{X^k}\in  {Syl_2 S_{2^k} \setminus Syl_2 A_{2^k}}.
 \end{cases}
\end{equation}
The generators of the group $H_7$ have the form $(g,h), \, \, g\in Syl_2(S_4), \, h\in C_2$, namely: ${\beta_{0}; \beta_{1}, \tau} \cup {(5,6)}$.

I.e. for element  ${{\beta }_{\sigma }}(2i-1)=2\sigma (i)-1,\,\,{{\beta }_{\sigma }}(2i)=2\sigma (i)$, ${{\sigma }_{i}}\in \left\{ {{1,2,...,2}^{k-1}} \right\}$.

Let us present new operation $\boxtimes $ (similar to that is in \cite{Dm}) as a even subdirect product of $Syl{{S}_{{{2}^{i}}}}$, $n = {{2}^{{{k}_{0}}}}+{{2}^{{{k}_{1}}}}+...+{{2}^{{{k}_{m}}}}$, $0\le {{k}_{0}}<{{k}_{1}}<...<{{k}_{m}}$ and  $m\ge 0$, i.e. $Syl{{S}_{{{2}^{{{k}_{1}}}}}}\boxtimes Syl{{S}_{{{2}^{{{k}_{2}}}}}}\boxtimes ...\boxtimes Syl{{S}_{{{2}^{{{k}_{l}}}}}}=Par(Syl{{S}_{{{2}^{{{k}_{1}}}}}}\times Syl{{S}_{{{2}^{{{k}_{2}}}}}}\times ...\times Syl{{S}_{{{2}^{{{k}_{l}}}}}})$, where $Par(G)$ -- set of all even permutations of $G$. Note, that $\boxtimes $ is not associated operation, for instance $ord({{G}_{1}}\boxtimes {{G}_{2}}\boxtimes {{G}_{3}})\,\,\,=\left| {{G}_{1}}\times {{G}_{2}}\times {{G}_{3}} \right|:2$ but $ord(({{G}_{1}}\boxtimes {{G}_{2}})\boxtimes {{G}_{3}})\,\,\,=\left| {{G}_{1}}\times {{G}_{2}}\times {{G}_{3}} \right|:4$. For cases $n=4k+1$, $n=4k+3$ it follows from formula of Legendre.

It is well known that the $Sy{{l}_{2}}{{S}_{{{2}^{{{k}_{i}}}}}}\simeq \wr _{j=1}^{{{k}_{i}}}{{C}_{2}}$. Since Sylow $p$-subgroup of direct product is direct product of Sylow $p$-subgroups and fact that  automorphism of rooted tree keeps an vertex-edge incidence relation then we have $Aut{{X}^{[{{k}_{0}}]}}\times Aut{{X}^{[{{k}_{1}}]}}\times ...\times Aut{{X}^{[{{k}_{m}}]}}\simeq Sy{{l}_{2}}{{S}_{n}}$, $n={{2}^{{{k}_{1}}}}+{{2}^{{{k}_{2}}}}+...+{{2}^{{{k}_{l}}}}$, ${{k}_{i}}\ge 0$, ${{k}_{i}}<{{k}_{i-1}}$.
Let us denote a subgroup, that consists of all even substitutions from $Sy{{l}_{2}}{{S}_{n}}$  as
$Aut{{X}^{[{{k}_{0}}]}}\boxtimes Aut{{X}^{[{{k}_{1}}]}}\boxtimes ...\boxtimes Aut{{X}^{[{{k}_{m}}]}}$, where a states of v.p. on ${{X}^{{{k}_{0}}-1}}\sqcup {{X}^{{{k}_{1}}-1}}\sqcup ...\sqcup {{X}^{{{k}_{m}}-1}}$ are related by congruence:

\begin{equation}\label{congruen}
\sum\limits_{i=0}^{m}{\sum\limits_{j=1}^{{{2}^{{{k}_{i}}-1}}}{{{s}_{{{k}_{i}}-1,j}}({{\alpha }_{i}})\equiv 0\left( \bmod 2 \right)}}.
\end{equation}

\begin{lemma} If number of active states on a last level of $Aut{{X}^{[{{k}_{i}}]}}$ from $Aut{{X}^{[{{k}_{0}}]}}\boxtimes \,\,...\,\,\boxtimes Aut{{X}^{[{{k}_{m}}]}}$ is odd, then it is subdirect product of groups $Aut{{X}^{[{{k}_{0}}]}},\,\,...\,\,,Aut{{X}^{[{{k}_{m}}]}}$.
\end{lemma}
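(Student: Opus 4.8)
The plan is to verify directly that the even subdirect product $W = Aut X^{[k_0]} \boxtimes \cdots \boxtimes Aut X^{[k_m]}$ is a subdirect product of its factors, i.e. that each coordinate projection $\pi_i \colon W \to Aut X^{[k_i]}$ is onto. First I would recall, using Lemma \ref{even}, that an automorphism $\alpha_i \in Aut X^{[k_i]}$ acts on the leaf set $X^{k_i}$ by an odd permutation if and only if its number of active states on the penultimate level $X^{k_i-1}$ is odd, since every active state on a lower level $X^l$, $l < k_i-1$, already yields an even permutation of $X^{k_i}$. Consequently the defining congruence (\ref{congruen}), namely $\sum_{i=0}^{m}\sum_{j=1}^{2^{k_i-1}} s_{k_i-1,j}(\alpha_i) \equiv 0 \pmod 2$, says exactly that the product of the componentwise permutations is even; thus $W = Par\bigl(Aut X^{[k_0]} \times \cdots \times Aut X^{[k_m]}\bigr)$ is precisely the kernel of the total-parity homomorphism onto $C_2$, and in particular a subgroup of the full direct product.

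Next I would establish surjectivity of each $\pi_i$. Take an arbitrary $\alpha_i \in Aut X^{[k_i]}$. If $\alpha_i$ acts evenly on $X^{k_i}$, then the tuple having $\alpha_i$ in position $i$ and the identity in every other position satisfies (\ref{congruen}) and hence lies in $W$, so $\alpha_i \in \pi_i(W)$. If $\alpha_i$ acts oddly, the hypothesis on the odd number of active states on a last level guarantees a companion factor $Aut X^{[k_j]}$, $j \ne i$, that admits an element $\alpha_j$ with an odd number of active states on $X^{k_j-1}$, i.e. an odd permutation; then the tuple carrying $\alpha_i$ in position $i$, this $\alpha_j$ in position $j$, and the identity elsewhere has total parity $1+1 \equiv 0 \pmod 2$, so it belongs to $W$ and projects onto $\alpha_i$. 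Therefore $\pi_i(W) = Aut X^{[k_i]}$ for every $i$, which is exactly the definition of a subdirect product.

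The delicate point, and where the hypothesis is indispensable, is the odd-coordinate case: an odd element of one factor can be lifted into $W$ only by pairing it with an odd element of some other factor, so surjectivity of $\pi_i$ onto the whole group $Aut X^{[k_i]}$ (including its odd permutations, not merely its even part $Syl_2 A_{2^{k_i}}$) requires the presence of odd permutations in at least one companion factor, which is ensured as soon as two distinct exponents $k_i \ge 1$ occur. I would single out this balancing step as the crux, remarking that in its absence the projection would recover only the index-two even subgroup of the factor, and $W$ would fail to be a subdirect product of the full groups $Aut X^{[k_0]}, \ldots, Aut X^{[k_m]}$.
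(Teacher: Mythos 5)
Your proof is correct and follows essentially the same route as the paper: both view $Aut X^{[k_0]}\boxtimes\cdots\boxtimes Aut X^{[k_m]}$ as the kernel of the total-parity homomorphism determined by the congruence (\ref{congruen}), embed elements acting evenly (or having states only below the last level) as tuples $(e,\ldots,e,\alpha_i,e,\ldots,e)$, and lift odd-acting elements by pairing them with an odd element in a companion factor so that the total count of last-level states stays even. Your write-up is simply a more explicit version of the paper's argument, spelling out the surjectivity of each coordinate projection and the role of the hypothesis, where the paper compresses this into the remark that an arbitrary state from $X^{k_i-1}$ enters the product only together with an even number of last-level states and defers details to an analogous result in the literature.
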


\begin{proof}
It is a quotient group which is a homomorphic  image obtained by a mapping   from   $Aut{{X}^{[{{k}_{0}}]}}\times Aut{{X}^{[{{k}_{1}}]}}\times ...\times Aut{{X}^{[{{k}_{m}}]}} \simeq Sy{{l}_{2}}{{S}_{n}}$ to this quotient group.
A kernel of $\varphi $  consists of all automorphisms which satisfy a congruence $\sum\limits_{i=0}^{m}{\sum\limits_{j=1}^{{{2}^{{{k}_{i}}-1}}}{{{s}_{{{k}_{i}}-1,j}}({{\alpha }_{i}})\equiv 1\left( \bmod 2 \right)}}$.
 At once from definition follows, that if number of states on last level of $Aut{{X}^{[{{k}_{i}}]}}$ is odd, then a subgroup from the condition is subdirect product of groups $Aut{{X}^{[{{k}_{0}}]}},\,\,...\,\,,Aut{{X}^{[{{k}_{m}}]}}$. Actually for every state of automorphism $\alpha $ from $Aut{{X}^{[{{k}_{i}}]}}$   on ${{X}^{l}}$, $l<{{k}_{i}}-1$ we have that $(e,...,e,{{\alpha }_{i}},e,...,e)$ belongs to $Aut{{X}^{[{{k}_{0}}]}}\times Aut{{X}^{[{{k}_{1}}]}}\times ...\times Aut{{X}^{[{{k}_{m}}]}}$. An arbitrary state from ${{X}^{{{k}_{i}}-1}}$ is included in $Aut{{X}^{[{{k}_{0}}]}}\boxtimes Aut{{X}^{[{{k}_{1}}]}}\boxtimes ...\boxtimes Aut{{X}^{[{{k}_{m}}]}}$  together with even number of states from last levels of ${{X}^{[{{k}_{0}}]}},\,...\,,{{X}^{[{{k}_{m}}]}}$.  Analogous fact was proved in [1] for a direct sum of permutations groups and for their subgroups which consists of all even permutations. Our statement is a restiction on a $Sy{{l}_{2}}{{S}_{n}}$.
\end{proof}
The Sylow subgroup $Sy{{l}_{2}}({{A}_{n}})$ has index 2 in $Syl_{2}({{S}_{n}})$ and it's structure: $Syl_2{S_{2^{{{k}_{1}}}}}\boxtimes Syl_2{{S}_{{{2}^{{{k}_{2}}}}}}\boxtimes ...\boxtimes Syl_2{S}_{{2}^{{k}_{l}}}$.

\begin{lemma} \label{isomorph} If $n=4k+2$, then the subgroup $Syl_2A_n$ is isomorphic
to $Syl_2 S_{4k}$, where $k\in \mathbb{N}$.
\end{lemma}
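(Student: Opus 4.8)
The plan is to exhibit an explicit isomorphism using the description of $Syl_2 A_n$ as an even subdirect product developed in the previous lemmas. Write $n = 4k+2 = 2^1 + 4k$, but more usefully decompose $n$ in binary: if $4k = 2^{k_1} + \cdots + 2^{k_m}$ with all $k_i \geq 2$ (since $4\mid 4k$), then $n = 2^1 + 2^{k_1} + \cdots + 2^{k_m}$ has a summand $2^1$ corresponding to the two points on which an isolated transposition can act. By the structure result preceding this lemma, $Syl_2 S_n \simeq Aut X^{[1]} \times Aut X^{[k_1]} \times \cdots \times Aut X^{[k_m]}$ and $Syl_2 A_n$ is the even subdirect product $Aut X^{[1]} \boxtimes Aut X^{[k_1]} \boxtimes \cdots \boxtimes Aut X^{[k_m]}$, i.e. the subgroup cut out by the parity congruence \eqref{congruen}. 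The factor $Aut X^{[1]} \simeq C_2$ is generated by a single transposition $t=(4k+1,4k+2)$.

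First I would observe, exactly as in the treatment of $Syl_2 A_6 \simeq Syl_2 S_4$ earlier in the paper (the bijection determined by \eqref{H}), that the parity constraint \eqref{congruen} lets the value of the isolated transposition $t$ be \emph{determined} by the parity of the rest of the element. Concretely, I would define a map
\[
\Phi : Syl_2 S_{4k} \longrightarrow Syl_2 A_{4k+2}, \qquad \Phi(g) = (g,\, h_g),
\]
where $h_g = e$ if $g$ acts as an even permutation on $\{1,\dots,4k\}$ and $h_g = t$ if $g$ acts oddly, the pair $(g,h_g)$ regarded inside $Aut X^{[k_1]}\boxtimes\cdots\boxtimes Aut X^{[k_m]}\boxtimes Aut X^{[1]}$. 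By construction the total permutation $(g,h_g)$ is always even, so $\Phi(g)\in Syl_2 A_{4k+2}$; this is precisely the congruence \eqref{congruen}.

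Next I would check that $\Phi$ is a homomorphism: since $g\mapsto \operatorname{sgn}(g)$ is a homomorphism $Syl_2 S_{4k}\to C_2$ and $h_g$ is just its image under the isomorphism $C_2\simeq\langle t\rangle$, the assignment $g\mapsto h_g$ is a homomorphism, hence so is $g\mapsto (g,h_g)$. Injectivity is immediate from the first coordinate. For surjectivity I would argue that every element of $Syl_2 A_{4k+2}$ has first coordinate in $Syl_2 S_{4k}$ and, being globally even, has its $Aut X^{[1]}$-coordinate forced to equal $h_g$; thus every such element lies in the image. Finally I would confirm the orders match, using Legendre's formula as in the worked examples: $|Syl_2 A_{4k+2}| = 2^{(\text{power of }2\text{ in }(4k+2)!)-1}$, and since $(4k+2)!$ and $(4k)!$ have $2$-adic valuations differing by exactly $1$ (the extra factors $4k+1,4k+2$ contribute a single $2$), this equals the power of $2$ in $(4k)!$, which is $|Syl_2 S_{4k}|$.

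The main obstacle I expect is verifying cleanly that the isolated $2^1$-summand really does absorb the parity obligation for the whole group, rather than this condition being distributed among several factors. This is exactly the subtlety handled in the $n=6$ and $n=7$ cases via \eqref{H} and \eqref{HH}: one must check that the single transposition $t$ has enough "room'' to correct any parity, and that no $2$-subgroup obstruction arises (as noted for $A_7$, a product of two distinct transpositions on three points yields a $3$-cycle, which is not a $2$-element). Here, because the correcting factor is a \emph{single} $C_2$ acting on a \emph{two}-point block, there is exactly one nontrivial choice and it always restores global evenness, so the construction goes through without the order-$3$ pathology — but this is the point that genuinely uses $n\equiv 2\pmod 4$ (so that the smallest binary summand is $2^1$, an isolated transposition) and deserves explicit verification.
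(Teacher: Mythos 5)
Your proposal is correct and takes essentially the same approach as the paper: the paper's proof also defines the map $\sigma \mapsto (\sigma,\,(4k+1,4k+2)^{\chi(\sigma)})$, where $\chi(\sigma)$ is the parity of $\sigma$, and verifies it is a bijective homomorphism from $Syl_2 S_{4k}$ onto $Syl_2 A_{4k+2}$. Your justification of the homomorphism property (via the sign map being a homomorphism into $C_2\simeq\langle t\rangle$) is tidier than the paper's case-by-case check on parities of $\sigma_1,\sigma_2$, but the underlying construction and verification are the same.
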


\begin{proof}
Let us consider the subgroup
 $H_{4k+2} = \{(g,h_g)|g \in Syl_2(S_{4k}), h_g \in S_2\}$, where
\begin{equation}\label{HHHH}
 \begin{cases}
h_g = e, \ \ if \ g|_{X^k} \in Syl_2(A_{4k+2}), \\
h_g = (4k+1,4k+2), \ if \, g|_{X^k} \in Syl_2(S_{4k+2}) \setminus Syl_2(A_{4k+2}).
 \end{cases}
\end{equation}
For instance the structure of $Syl_2(A_{6})$ is the same as subgroup $H_6:$ $H_6 < Syl_2(S_4) \times (C_2)$, where $H_6= \{ (g, h) | g\in Syl_2(S_4), h \in  AutX \}$. So last bijection determined by (\ref{HHHH}) give us $Syl_2 A_{6} \simeq Syl_2 S_{4} $. As a corollary we have $Syl_2 A_{{2^k}+2} \simeq Syl_2 S_{2^k} $.

Bijection correspondence between set of elements of $Syl_2(A_n)$ and $Syl_2(S_{4k})$ we have from (\ref{HHHH}). Let's consider a mapping $\phi: Syl_2 (S_{4k}) \rightarrow Syl_2 (A_{4k+2})$ if $\sigma \in Syl_2(S_{4k})$ then $\phi(\sigma)=\sigma \circ (4k+1, 4k+2)^{\chi(\sigma)}=(\sigma,  (4k+1, 4k+2)^{\chi(\sigma)})$, where $\chi(\sigma)$ is number of transposition in $\sigma$ by module 2.
So $\phi(\sigma) \in Syl_2(A_{4k+2})$.
 If $\phi(\sigma) \in A_{n}$ then ${\chi(\sigma)}=0$, so $\phi(\sigma) \in Syl_2(A_{n-1})$. Check that $\phi$ is homomorphism.
Assume that ${{\sigma }_{1}}\in Sy{{l}_{2}}({{S}_{4k}}\backslash {{A}_{4k}}),\,\,{{\sigma }_{2}}\in Sy{{l}_{2}}({{A}_{4k}})$, then $\phi ({{\sigma }_{1}})\phi ({{\sigma }_{2}})=({{\sigma }_{1}},{{h}^{\chi({{\sigma }_{1}})}})({{\sigma }_{2}},e)=({{\sigma }_{1}}{{\sigma }_{2}},h)={{\sigma }_{1}}{{\sigma }_{2}}\circ (4k+1,4k+2)$, where $({{\sigma }_{i}},h)={{\sigma }_{i}}\circ {{h}^{\chi({{\sigma }_{i}})}}\in Sy{{l}_{2}}({{A}_{4k+2}})$. If ${\sigma _{1}},\,\,{\sigma_{2}}\in {{S}_{{{2}^{k}}}}\backslash {{A}_{{{2}^{k}}}}$, then $\phi ({{\sigma }_{1}})\phi ({{\sigma }_{2}})=({{\sigma }_{1}},{{h}^{\chi({{\sigma }_{1}})}})({{\sigma }_{2}},{{h}^{\chi({{\sigma }_{2}})}})=({{\sigma }_{1}}{{\sigma }_{2}},\,e)=(a,\,e)$, where ${{\sigma }_{1}}{{\sigma }_{2}}=a\in {{A}_{4k+2}}$.
So it is isomorphism.
\end{proof}
Let ${{n}_{m}}={{2}^{{{k}_{0}}}}+{{2}^{{{k}_{1}}}}+...+{{2}^{{{k}_{m}}}}$, where $0\le {{k}_{0}}<{{k}_{1}}<...<{{k}_{m}}$ and  $m\ge 0$.
\begin{theorem}  If ${{n}_{m}}=4k+2$, then the minimal set of generators for $Sy{{l}_{2}}{{A}_{n_m}}$ has $\sum\limits_{i=1}^{m}{{{k}_{i}}}$ elements.
\end{theorem}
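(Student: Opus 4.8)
The plan is to reduce the problem to a direct product of Sylow subgroups of symmetric groups and then to read off the rank from the Frattini quotient, using the Burnside basis theorem for $2$-groups.

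First I would extract the arithmetic content of the hypothesis. Writing $n_m = 2^{k_0}+2^{k_1}+\cdots+2^{k_m}$ with $0\le k_0<k_1<\cdots<k_m$, the condition $n_m=4k+2$ says $n_m\equiv 2\pmod 4$, so the least nonzero binary digit of $n_m$ is the one of weight $2^1$; hence $k_0=1$ and
\[
n_m-2 = 2^{k_1}+\cdots+2^{k_m} = 4k .
\]
Thus the index omitted from the sum $\sum_{i=1}^m k_i$ is exactly $k_0=1$, and I should expect the final count to coincide with $rk(Syl_2 S_{4k})$.

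Next I would invoke Lemma \ref{isomorph}: since $n_m=4k+2$, it yields $Syl_2 A_{n_m}\simeq Syl_2 S_{4k}$. Because $4k=2^{k_1}+\cdots+2^{k_m}$ is a sum of distinct powers of two, the base-$2$ decomposition of Sylow subgroups of symmetric groups recorded above (the isomorphism $Aut X^{[k_1]}\times\cdots\times Aut X^{[k_m]}\simeq Syl_2 S_{4k}$) gives
\[
Syl_2 A_{n_m}\simeq \prod_{i=1}^m B_{k_i}, \qquad B_{k_i}\simeq \underbrace{C_2\wr\cdots\wr C_2}_{k_i}\simeq Syl_2 S_{2^{k_i}} .
\]
It then remains to compute the rank of this direct product. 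Since each factor is a $2$-group, one has $(P\times Q)^2 = P^2\times Q^2$ and $[P\times Q,P\times Q]=[P,P]\times[Q,Q]$, so the Frattini subgroup splits, $\Phi\!\left(\prod_{i=1}^m B_{k_i}\right)=\prod_{i=1}^m \Phi(B_{k_i})$. Hence the Frattini quotient is the direct sum of the quotients $B_{k_i}/\Phi(B_{k_i})$; by the Burnside basis theorem the rank of a finite $2$-group equals the $\mathbb{F}_2$-dimension of its Frattini quotient, and dimensions add over direct sums, so $rk\!\left(\prod_{i=1}^m B_{k_i}\right)=\sum_{i=1}^m rk(B_{k_i})$. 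Using the known equality $rk(B_{k_i})=rk(Aut X^{[k_i]})=k_i$ (from \cite{Gr}) then gives $rk(Syl_2 A_{n_m})=\sum_{i=1}^m k_i$, as asserted.

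The step deserving the most care is the last equality $rk\!\left(\prod B_{k_i}\right)=\sum rk(B_{k_i})$. The inequality ``$\le$'' is immediate by concatenating minimal generating sets of the factors, but the reverse inequality is the genuine content: it is precisely the splitting of the Frattini subgroup over a direct product of $p$-groups (equivalently, the fact that diagonal maximal subgroups do not lower $\Phi$ for such products) that rules out any smaller generating set and secures equality. I would also verify the degenerate cases $m=0$ (where $n_m=2$, the group is trivial and the empty sum gives $0$) and $m=1$ (e.g. $Syl_2 A_6\simeq Syl_2 S_4$ of rank $k_1=2$) to confirm the formula at the boundary.
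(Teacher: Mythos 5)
Your proposal is correct and follows essentially the same route as the paper: reduce via Lemma \ref{isomorph} to $Syl_2 S_{4k}$, decompose it as $\prod_{i=1}^m Syl_2 S_{2^{k_i}}\simeq\prod_{i=1}^m Aut X^{[k_i]}$, and obtain the rank $\sum_{i=1}^m k_i$ from the elementary abelian quotient together with the obvious generating set of that size. Your packaging of the last step via the Burnside basis theorem and the splitting $\Phi\bigl(\prod B_{k_i}\bigr)=\prod\Phi(B_{k_i})$ is just a cleaner formulation of what the paper does by explicitly assembling the surjections onto $C_2^{k_i}$ from its Corollary \ref{qoutient}, so the two arguments coincide in substance.
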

\begin{proof}
Actually, according to Lemma \ref{isomorph},  $Sy{{l}_{2}}{{A}_{4k+2}}$ is isomorphic to $Sy{{l}_{2}}{{S}_{4k}}$. In Statement 2 it was proved that $Sy{{l}_{2}}{{S}_{4k}}\simeq Sy{{l}_{2}}{{S}_{{{2}^{{{k}_{1}}}}}}\times ...\times Sy{{l}_{2}}{{S}_{{{2}^{{{k}_{m}}}}}}$, where $4k={{2}^{{{k}_{1}}}}+...+{{2}^{{{k}_{m}}}}$,

${{k}_{1}}<...<{{k}_{m}}$. On the other hand, $Sy{{l}_{2}}{{S}_{{{2}^{{{k}_{i}}}}}}\simeq Aut{{X}^{[{{k}_{i}}]}}$, so there exists the homomorphism $\varphi $ from every factor $Sy{{l}_{2}}{{S}_{{{2}^{{{k}_{i}}}}}}$ onto $C_{2}^{{{k}_{i}}}$. Such homomorphism was defined in Corollary \ref{qoutient} and in \cite{Gr}.
And what is more it is known that $Aut{{X}^{[{{k}_{i}}]}}$ has a minimal generating set of $k_i$ generators \cite{Gr}.
   Thus, there exists the homomorphism from $Aut{{X}^{[{{k}_{1}}]}}\times \,\,...\,\,\times Aut{{X}^{[{{k}_{m}}]}}$ onto $C_{2}^{{{k}_{1}}}\times \,...\,\times C_{2}^{{{k}_{m}}}$, so the rank of $Syl_2 A_{4k+2}$ is $\sum\limits_{i=1}^{m}{{{k}_{i}}}$, where $k_1=1$. 
\end{proof}
This result was confirmed by the algebraic system GAP. Actually, it was founded by GAP that the minimal generating set for $Syl_2 A_{14}$, $Syl_2 A_{14} \simeq Sy{{l}_{2}}{{S}_{12}}\simeq Sy{{l}_{2}}{{S}_{{{2}^{{{2}}}}}} \times Syl_2 S_{2^3}$, of 5 elements:
 $(11,12)(13,14), (9,11)(10,12), (7,8)(9,10), (1,5)(2,6)(3,7)(4,8), (1,3)(2,4)$.

\begin{lemma} \label{Action}
 If ${{n}_{m}}\equiv 1(\bmod 2)$, then there exists a point $n$ from tuple $M$ of ${{n}_{m}}$ points indexed by numbers  from 1 to ${{n}_{m}}$, such that $S{{t}_{Sy{{l}_{2}}{{S}_{{{n}_{m}}}}}}(n)$ is isomorphic to $Sy{{l}_{2}}{{S}_{{{n}_{m}}}}$ acting on a tuple $M$.
\end{lemma}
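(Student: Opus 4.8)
The plan is to exploit the parity hypothesis to locate a \emph{fixed point} of the natural action of $Syl_2 S_{n_m}$ on $M$, and then to observe that the stabilizer of a fixed point is the whole group, whence the required (permutation) isomorphism is immediate. First I would note that $n_m \equiv 1 \pmod 2$ forces $k_0 = 0$ in the decomposition $n_m = 2^{k_0} + 2^{k_1} + \ldots + 2^{k_m}$, since every summand $2^{k_i}$ with $k_i \ge 1$ is even while the total is odd; thus exactly one summand equals $2^0 = 1$.

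Next, by the structure of the Sylow $2$-subgroup of a symmetric group established above, $Syl_2 S_{n_m} \simeq Aut X^{[k_0]} \times Aut X^{[k_1]} \times \ldots \times Aut X^{[k_m]}$, and this decomposition is \emph{block-diagonal} as a permutation action: the tuple $M$ of $n_m$ points splits into blocks $M_0, M_1, \ldots, M_m$ of sizes $2^{k_0}, 2^{k_1}, \ldots, 2^{k_m}$, where each factor $Aut X^{[k_i]}$ acts on the $2^{k_i}$ leaves of its own block $M_i$ and fixes every point lying outside $M_i$. This is exactly the fact used earlier: a Sylow $p$-subgroup of a direct product is the direct product of the Sylow $p$-subgroups of the factors, and a rooted-tree automorphism preserves the vertex-edge incidence relation, so it cannot move a point of one block into another.

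Since $k_0 = 0$, the block $M_0$ is a singleton; denote its unique point by $n$ (we are free to assign it any index from $1$ to $n_m$). Every element of $Syl_2 S_{n_m}$ permutes points within each block $M_i$ separately and, acting through the trivial factor $Aut X^{[0]}$, leaves $n$ in place. Hence $n$ is fixed by the whole group, so $St_{Syl_2 S_{n_m}}(n) = Syl_2 S_{n_m}$. Because the stabilizer coincides with the entire group and still acts on the same tuple $M$, it is permutation-isomorphic to $Syl_2 S_{n_m}$ acting on $M$, which is precisely the assertion.

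I expect the only delicate point to be the justification that the direct-product decomposition is genuinely block-diagonal, i.e. that each tree-automorphism factor fixes every point belonging to the other blocks; once this is granted the existence of the fixed point, and therefore the identification of its stabilizer with the full group, is immediate. This delicate point is, however, already supplied by the incidence-preserving property of rooted-tree automorphisms invoked in the preceding discussion.
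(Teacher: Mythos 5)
Your proposal is correct and follows essentially the same route as the paper: both use the decomposition $Syl_2 S_{n_m} \simeq Aut X^{[k_0]} \times \cdots \times Aut X^{[k_m]}$ acting block-wise on $M$, observe that $n_m \equiv 1 \pmod 2$ forces $k_0 = 0$ so that one block is a singleton, and conclude via the incidence-preserving (block-preserving) property of tree automorphisms that this point is fixed by the whole group, making its stabilizer the full Sylow subgroup. Your write-up is in fact somewhat more explicit than the paper's about why the block decomposition yields the fixed point, but the underlying argument is identical.
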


\begin{proof}
If ${{S}_{{{n}_{m}}}}$ acts on $M$, then one of a Sylow 2-subgroups $H<{{S}_{{{n}_{m}}}}$ is isomorphic to $Aut{{X}^{[{{k}_{0}}]}}\times Aut{{X}^{[{{k}_{1}}]}}\times ...\times Aut{{X}^{[{{k}_{m}}]}}$, that acts on tuple of ${{n}_{m}}$ points, where${{n}_{m}}={{2}^{{{k}_{0}}}}+{{2}^{{{k}_{1}}}}+...+{{2}^{{{k}_{m}}}}$,  ${{k}_{0}}<{{k}_{1}}<...<{{k}_{m}}$.  By virtue of the fact that ${{n}_{m}}\equiv 1(\bmod 2)$, then ${{k}_{0}}=0$. Thus point $n$, that is in ${{X}^{[{{k}_{0}}]}}$ has a stabilizer $S{{t}_{H}}(n)\simeq Sy{{l}_{2}}{{S}_{{{n}_{m}}}}$. It is so, because group of automorphism of such group keeps an vertex-edge incidence relation of
${{X}^{\left[ {{k}_{i}} \right]}},\,\,i\in \left\{ 0,...,m \right\}$.
Thus, action of every Sylow 2-subgroup of ${{S}_{{{n}_{m}}}}$, where ${{n}_{m}}\equiv 1(\bmod 2)$, fix one element from $\{1,2,...,{{n}_{m}}\}$. \end{proof}
According to the Sylow theorem all Sylows $p$-subgroups are conjugated so their actions are isomorphic. In particular, a Sylow  2-subgroup of $S_{2^r}$ is self-normalizing. The number of Sylow 2-subgroups of ${{S}_{{{2}^{r}}}}$ is ${{2}^{r}}!:{{2}^{e}}$  where $e=1+2+...+{{2}^{r-1}}$ \cite{Weisner}.

\begin{remark} The mentioned isomorphism may be extended to $Sy{{l}_{2}}{{A}_{4k+3}}\simeq Sy{{l}_{2}}{{A}_{4k+2}}\simeq Sy{{l}_{2}}{{S}_{4k+1}}\simeq Sy{{l}_{2}}{{S}_{4k}}$.
\end{remark}
\begin{proof}

Since in accordance with Lemma \ref{Action} an action of $Sy{{l}_{2}}{{A}_{4k+3}}$ on the set of $4k+3$ elements fixes  one point, then this group as group of action is isomorphic to $Syl_2 {{A}_{4k+2}}$. For a similar reason  $Sy{{l}_{2}}{{A}_{4k+1}}\simeq Sy{{l}_{2}}{{A}_{4k}}$. As well as it was proved in Lemma \ref{isomorph} that $Sy{{l}_{2}}{{A}_{4k+2}}\simeq Sy{{l}_{2}}{{S}_{4k}}$.
\end{proof}

 The number of generating sets for $Aut X^{k_i}$ is not less then $N_{k_i}=1\cdot 2 \cdot 2^2 \cdot...\cdot 2^{k_i-1}=2^{\frac{k_i(k_i-1)}{2}}$. Hence, the number of generating sets for $Sy{{l}_{2}}{{A}_{4k+2}}$ is not less than $ 2^{\frac{k_1(k_1-1)}{2}} \cdot . . . \cdot 2^{\frac{k_m(k_m-1)}{2}} $. Thus, it can be applied in cryptography \cite{Myasn}.

\begin{property} Relation between sizes	of the Sylows subgroup for $n=4k-2$ and $n=4k$ is given by $\left| Sy{{l}_{2}}({{A}_{4k-2}}) \right|={{2}^{i}}\left| Sy{{l}_{2}}({{A}_{4k}}) \right|$, where value $i$ depends only of power of 2 in decomposition of prime number of $k$.
\end{property}
\begin{proof}
Really $\left| {{A}_{4k-2}} \right|=\frac{(4k-2)!}{2}$, therefore $\left| {{A}_{4k}} \right|=\frac{(4k-2)!}{2}(4k-1)4k$, it means that $i$ determines only by $k$ and is not bounded.
 \end{proof}




\begin{prop} 
If $n=4k$, then index $Syl_2(A_{n+3})$ in $A_{n+3}$ is equal to $[S_{4k+1}: Syl_2 (A_{4k+1})](2k+1)(4k+3)$, index
$Syl_2(A_{n+1})$ in $A_{n+1}$ as a subgroup of index $2^{m-1}$, where $m$ is the
maximal natural number, for which $4k!$ is divisible by $2^m$.
\end {prop}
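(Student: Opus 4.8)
The plan is to reduce the whole statement to a bookkeeping of the $2$-adic valuation $v_2$, using two ingredients already at hand: Legendre's formula for $v_2(n!)$ (as applied in Lemma \ref{ordG_k}) and the fact recorded earlier that $Syl_2(A_n)$ has index $2$ in $Syl_2(S_n)$, so that $|Syl_2(A_n)| = 2^{v_2(n!)-1}$ for $n\ge 2$. Writing $m = v_2((4k)!)$, I would first compute the valuations of the three relevant factorials. Since $4k+1$ and $4k+3$ are odd while $4k+2 = 2(2k+1)$ with $2k+1$ odd, we get $v_2((4k+1)!)=m$, $v_2((4k+2)!)=m+1$ and $v_2((4k+3)!)=m+1$. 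This immediately yields $|Syl_2(A_{4k+1})| = 2^{m-1}$ and $|Syl_2(A_{4k+3})| = 2^{m}$, which is the content of the second assertion: the Sylow $2$-subgroup of $A_{n+1}=A_{4k+1}$ has order $2^{m-1}$ (equivalently, index $2$ in $Syl_2 S_{4k+1}$, whose order is $2^m$).

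For the first assertion I would expand the index as an odd part. By definition $[A_{4k+3}:Syl_2(A_{4k+3})] = |A_{4k+3}|/|Syl_2(A_{4k+3})| = \tfrac{(4k+3)!/2}{2^{m}} = \tfrac{(4k+3)!}{2^{m+1}}$. Then I would peel off the top two factors, $(4k+3)! = (4k+3)(4k+2)(4k+1)! = (4k+3)\cdot 2(2k+1)\cdot(4k+1)!$, and cancel the single factor of $2$ against the denominator to obtain
\[
[A_{4k+3}:Syl_2(A_{4k+3})] = (2k+1)(4k+3)\cdot\frac{(4k+1)!}{2^{m}}.
\]
Finally I would identify $\tfrac{(4k+1)!}{2^{m}} = \tfrac{(4k+1)!/2}{2^{m-1}} = [A_{4k+1}:Syl_2(A_{4k+1})]$, which by the index-$2$ relation equals the index $[S_{4k+1}:Syl_2(S_{4k+1})]$ of the Sylow $2$-subgroup inside the full symmetric group; this is the factor appearing in the statement. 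Substituting gives exactly the claimed product.

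There is no deep obstacle here: the whole argument tracks $v_2$ through Legendre's formula. The only point that needs care is the provenance of the arithmetic factor $(2k+1)(4k+3)$, which is precisely the odd part of the product $(4k+2)(4k+3)$ of the two integers adjoined in passing from $A_{4k+1}$ to $A_{4k+3}$, the single even contribution $2$ from $4k+2$ being absorbed by the rise of the Sylow order from $2^{m-1}$ to $2^{m}$. I would also flag that the first factor must be read as the index of the Sylow subgroup inside $A_{4k+1}$ (equivalently, the index of $Syl_2 S_{4k+1}$ in $S_{4k+1}$), since both equal the common odd part $\tfrac{(4k+1)!}{2^{m}}$; it is this reading that makes the identity balance. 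A numerical check in the smallest case $k=1$ (so $m = v_2(4!)=3$, giving $[A_7:Syl_2 A_7]=315=15\cdot 3\cdot 7$) confirms the bookkeeping.
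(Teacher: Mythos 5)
Your proof is correct and takes essentially the same route as the paper's: both arguments track the $2$-adic valuation via Legendre's formula and peel off $(4k+3)!=(4k+1)!\cdot 2(2k+1)(4k+3)$, letting the single even factor be absorbed by the jump of the Sylow order from $2^{m-1}$ to $2^{m}$. Your version is in fact the more careful one, since you explicitly flag that the factor written as $[S_{4k+1}:Syl_2(A_{4k+1})]$ must be read as the common odd part $\frac{(4k+1)!}{2^{m}}=[A_{4k+1}:Syl_2(A_{4k+1})]=[S_{4k+1}:Syl_2(S_{4k+1})]$, a conflation the paper's own computation passes over silently.
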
 
 \begin {proof} For $Syl_2(A_{n+3})$ its cardinality equal to maximal power of 2 which divide $(4k+3)!$ this power on 1 grater then correspondent power in $(4k+1)!$  because $(4k+3)!=(4k+1)!(4k+2)(4k+3)=(4k+1)!2(2k+1)(4k+3)$ so $\mid Syl_2 A_{n+3}\mid= 2^m \cdot 2 = 2^{m+1}$.
As a result of it indexes of $A_{n+3}$ and $A_{n+1}$ are following: $  [S_{4k+1}: Syl_2 (A_{4k+1}) ] = \frac{(4k+1)!}{2^m} $ and $[S_{4k+3}: Syl_2 (A_{4k+3})] = [S_{4k+1}: Syl_2 (A_{4k+1}) ](2k+1)(4k+3) = \frac{(4k+1)!}{2^m}(2k+1)(4k+3) $.
\end {proof}

\begin{prop} 
If $n=2k$ then
$[Syl_2(A_n) : Syl_2(S_{n-1})] =2^{m-1}$, where $m$ is the maximal power of 2 in factorization of $n$.
\end{prop}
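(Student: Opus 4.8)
The plan is to reduce everything to $2$-adic valuations of factorials, in the spirit of Lemma \ref{ordG_k}. I would first record the two orders involved. By Legendre's formula the exponent of $2$ in $N!$ equals $v_2(N!)=\sum_{i\ge1}\left[\frac{N}{2^i}\right]$, so that $|Syl_2(S_N)|=2^{v_2(N!)}$; and since $|A_N|=\frac{N!}{2}$ for $N\ge2$, the same count with one unit subtracted (exactly the reasoning used in Lemma \ref{ordG_k}) gives $|Syl_2(A_N)|=2^{v_2(N!)-1}$. Specialising to $N=n$ for the alternating group and to $N=n-1$ for the symmetric group produces $|Syl_2(A_n)|=2^{v_2(n!)-1}$ and $|Syl_2(S_{n-1})|=2^{v_2((n-1)!)}$.

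The heart of the argument is then the trivial factorisation $n!=n\,(n-1)!$, which yields $v_2(n!)=v_2(n)+v_2((n-1)!)$. Substituting this into the ratio of the two orders makes the valuations of $(n-1)!$ cancel:
\[
[Syl_2(A_n):Syl_2(S_{n-1})]=\frac{2^{v_2(n!)-1}}{2^{v_2((n-1)!)}}=2^{\,v_2(n!)-v_2((n-1)!)-1}=2^{\,v_2(n)-1}.
\]
As $n=2k$ is even, $v_2(n)=m\ge1$ is precisely the exponent of the maximal power of $2$ dividing $n$, so the index equals $2^{m-1}$, which is the assertion. In particular the index is $1$ exactly when $n\equiv2\pmod4$, in agreement with the isomorphism $Syl_2A_{2^l+2}\simeq Syl_2S_{2^l}$ noted after Lemma \ref{isomorph}.

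The step I expect to be the real obstacle is not the arithmetic but the justification that $[\,\cdot:\cdot\,]$ is a genuine subgroup index: a Sylow $2$-subgroup of $S_{n-1}$ contains odd permutations (for instance a single transposition), hence it is not contained in $A_n$ under the tautological action on $\{1,\dots,n\}$. I would repair this using the two reductions already present in the paper. Since $n-1$ is odd, Lemma \ref{Action} supplies a fixed point and therefore $Syl_2(S_{n-1})\simeq Syl_2(S_{n-2})$; and the parity-correcting homomorphism $g\mapsto(g,h_g)$ of Lemma \ref{isomorph}, with $h_g=(n-1,n)$ precisely when $g$ is odd, embeds $Syl_2(S_{n-2})$ into $A_n$ on the two spare points $\{n-1,n\}$ without changing its order. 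The image is a $2$-subgroup, so by Sylow's theorem it may be conjugated inside a chosen $Syl_2(A_n)$; the index of this copy then equals the order ratio computed above, namely $2^{m-1}$, the integrality being automatic because $v_2(n)\ge1$. If instead one reads $[\,\cdot:\cdot\,]$ merely as a quotient of orders, as in the Property above, the first two paragraphs already complete the proof.
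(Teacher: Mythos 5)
Your proposal is correct and follows essentially the same route as the paper: both compute the index as a ratio of orders via Legendre's formula and the factorization $n!=n\,(n-1)!$, and both justify the embedding by reducing $Syl_2(S_{n-1})\simeq Syl_2(S_{n-2})$ (since $n-1$ is odd) and applying the parity-correcting map $\sigma\mapsto\sigma\,(n-1,n)^{\chi(\sigma)}$ of Lemma \ref{isomorph}. Your write-up is in fact tidier than the paper's (which garbles the valuation bookkeeping as $(n/2)!=(n-1)!\tfrac{n}{2}$ and the cancellation $\tfrac{2^{m-1}}{2^t}2^t$), but the underlying argument is the same.
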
 
 \begin {proof}
$|Syl_2(S_{n-1})|$ is equal to $t$ that is a maximal power of 2 in $(n-1)!$. $|Syl_2(A_{n})|$ is equal to maximal power of 2 in $(n!/2)$. Since $n=2k$ then $(n/2)!=(n-1)!\frac{n}{2}$ and $2^f$ is equal to product maximal power of 2 in $(n-1)!$ on maximal power of 2 in $\frac{n}{2}$. Therefore $\frac{|Syl_2(A_{n})|}{|Syl_2(S_{n-1})|}=\frac {2^{m-1}}{2^t} 2^t=2^{m-1}. $
Note that for odd $m=n-1$ the group $Syl_2(S_{m}) \simeq Syl_2(S_{m-1})$ i.e. $Syl_2(S_{n-1})\simeq Syl_2(S_{n-2})$. The group $Syl_2(S_{n-2})$ contains the automorphism of correspondent binary subtree with last level $X^{n-2}$ and this automorphism realizes the permutation $\sigma$ on $X^{n-2}$. For every $\sigma\in Syl_2(S_{n-2})$ let us set in correspondence a permutation $\sigma (n-1,n)^{\chi (\sigma)} \in Syl_2(A_{n})$, where $\chi (\sigma)$ -- number of transposition in $\sigma$ by $mod\, 2$, so it is bijection $\phi(\sigma)\longmapsto \sigma (n-1, n){\chi (\sigma)}$ that has property of homomorphism, see Lemma \ref{isomorph}. Thus, we prove that $Syl_2(S_{n-1}) \hookrightarrow Syl_2(A_{n})$ and its index is $2^{d-1}$.
\end {proof}

\begin{prop} 
The ratio of $|Syl_2(A_{4k+3})|$ and $|Syl_2(A_{4k+1})|$ is equal to 2 and ratio of indexes $[A_{4k+3} : Syl_2(A_{4k+3})]$ and $[A_{4k+1} : Syl_2(A_{4k+1})]$ is equal $(2k+1)(4k+3)$.
\end{prop}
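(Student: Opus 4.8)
The plan is to reduce both assertions to a single computation of the $2$-adic valuation of consecutive factorials, in exactly the spirit of Lemma \ref{ordG_k}. Recall that $|A_n| = n!/2$, so that the order of a Sylow $2$-subgroup is $|Syl_2(A_n)| = 2^{v_2(n!)-1}$, where $v_2$ denotes the exponent of $2$ supplied by Legendre's formula. The cornerstone of the argument is the elementary factorization
\[
(4k+3)! = (4k+1)!\,(4k+2)(4k+3).
\]
First I would observe that $4k+2 = 2(2k+1)$ contributes exactly one factor of $2$ (since $2k+1$ is odd) and that $4k+3$ is odd, so that $v_2((4k+3)!) = v_2((4k+1)!) + 1$.

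For the first assertion this immediately yields
\[
\frac{|Syl_2(A_{4k+3})|}{|Syl_2(A_{4k+1})|} = \frac{2^{v_2((4k+3)!)-1}}{2^{v_2((4k+1)!)-1}} = 2^{\,v_2((4k+3)!)-v_2((4k+1)!)} = 2,
\]
which is the claimed ratio. For the second assertion I would write each index as $[A_n : Syl_2(A_n)] = |A_n|/|Syl_2(A_n)|$ and form the quotient of the two indices, so that the factorial part and the Sylow part separate:
\[
\frac{[A_{4k+3} : Syl_2(A_{4k+3})]}{[A_{4k+1} : Syl_2(A_{4k+1})]} = \frac{|A_{4k+3}|}{|A_{4k+1}|}\cdot\frac{|Syl_2(A_{4k+1})|}{|Syl_2(A_{4k+3})|}.
\]
Here the first factor is $\tfrac{(4k+3)!}{(4k+1)!} = (4k+2)(4k+3)$, and the second factor equals $1/2$ by the first assertion already established. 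Multiplying and cancelling the single factor of $2$ in $4k+2 = 2(2k+1)$ leaves $(2k+1)(4k+3)$, as required.

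The argument is almost entirely bookkeeping, and I expect no genuine obstacle. The only point demanding a touch of care is the precise tracking of the $2$-adic valuation in passing from $4k+1$ to $4k+3$: one must verify that exactly one new factor of $2$ appears and no others, so that the ratio of Sylow orders is precisely $2$ rather than a higher power of $2$. Once that step is pinned down, both ratios follow by direct substitution.
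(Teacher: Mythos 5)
Your proof is correct and follows essentially the same route as the paper: Legendre's formula (made explicit through the factorization $(4k+3)! = (4k+1)!\,(4k+2)(4k+3)$, with $4k+2=2(2k+1)$ contributing exactly one new factor of $2$) gives the Sylow ratio of $2$, and the index ratio $(2k+1)(4k+3)$ then follows by direct bookkeeping. If anything, your handling of the second assertion is cleaner than the paper's, which detours through a theorem about when a $p$-subgroup of a subgroup is Sylow in the ambient group, whereas your factorization of the index ratio as $\frac{|A_{4k+3}|}{|A_{4k+1}|}\cdot\frac{|Syl_2(A_{4k+1})|}{|Syl_2(A_{4k+3})|}$ makes that appeal unnecessary.
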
 

\begin{proof} The
ratio $|Syl_2(A_{4k+3})| : |Syl_2(A_{4k+1})|= 2$ holds because formula of Legendre gives us new one power of 2 in $(4k+3)!$ in compering with $(4k+1)!$.  Second part of statement follows from theorem about $p$-subgroup of $H$, $[G:H] \neq kp $ then one of $p$-subgroups of $H$ is Sylow $p$-group of $G$. In this case $p=2$ but $|Syl_2(A_{4k+3})| : |Syl_2(A_{4k+1})|=2$ so we have to divide ratio of indexes on $2$.
\end{proof}

\begin{prop} 
If $n=2k+1$ then
$Syl_2(A_n) \cong Syl_2(A_{n-1})$ and
$Syl_2(S_n) \cong Syl_2(S_{n-1})$.
\end{prop}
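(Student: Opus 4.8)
The plan is to reduce both isomorphisms to an equality of orders combined with a point-fixing embedding. First I would record, exactly as in the proof of Lemma~\ref{ordG_k} via Legendre's formula, that the exponent of $2$ in $m!$ equals $e(m)=\sum_{i\geq 1}\lfloor m/2^{i}\rfloor$. For odd $n=2k+1$ the extra factor $n$ carries no power of $2$, so $\lfloor (2k+1)/2^{i}\rfloor=\lfloor 2k/2^{i}\rfloor$ for every $i\geq 1$: the interval $(2k,2k+1]$ contains only the odd integer $2k+1$, which for $i\geq 1$ is never a multiple of $2^{i}$. Hence $e(n)=e(n-1)$. Consequently $|Syl_2(S_n)|=2^{e(n)}=2^{e(n-1)}=|Syl_2(S_{n-1})|$, and since $|A_m|=m!/2$ we also get $|Syl_2(A_n)|=2^{e(n)-1}=|Syl_2(A_{n-1})|$.

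Next I would exploit the inclusions $S_{n-1}\hookrightarrow S_n$ and $A_{n-1}\hookrightarrow A_n$ realised as the pointwise stabiliser of the last point $n$. Under these inclusions a Sylow $2$-subgroup $P\leq S_{n-1}$ (respectively $Q\leq A_{n-1}$) becomes a $2$-subgroup of $S_n$ (respectively $A_n$) whose order, by the previous paragraph, already equals the full Sylow order $2^{e(n)}$ (respectively $2^{e(n)-1}$). Therefore $P$ is itself a Sylow $2$-subgroup of $S_n$ and $Q$ a Sylow $2$-subgroup of $A_n$. Since all Sylow $2$-subgroups of a given finite group are conjugate, and hence isomorphic, this yields $Syl_2(S_n)\cong Syl_2(S_{n-1})$ and $Syl_2(A_n)\cong Syl_2(A_{n-1})$.

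A more structural reformulation, which also makes the isomorphism explicit, is available through Lemma~\ref{Action} and the binary decomposition $Aut{X}^{[k_0]}\times Aut{X}^{[k_1]}\times\dots\times Aut{X}^{[k_m]}\simeq Syl_2 S_n$ established above: for odd $n$ one has $k_0=0$, the factor $Aut{X}^{[0]}$ is trivial (it acts only on the single fixed point furnished by Lemma~\ref{Action}), and dropping it leaves exactly $Syl_2(S_{n-1})$ with $n-1=2^{k_1}+\dots+2^{k_m}$. For the alternating case the claim is already contained in the Remark following Lemma~\ref{Action}, since every odd $n$ has the form $4k+1$ or $4k+3$. The only point requiring care --- and the step I would treat as the main, though modest, obstacle --- is the passage from equality of orders to a genuine isomorphism: one must check that the embedded subgroup lies in the correct ambient group as a $2$-subgroup of maximal $2$-order, so that Sylow's conjugacy theorem applies. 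The fixed-point description supplied by Lemma~\ref{Action} is precisely what guarantees this, and it is what turns the numerical coincidence $e(n)=e(n-1)$ into the asserted isomorphisms.
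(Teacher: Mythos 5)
Your proposal is correct and takes essentially the same route as the paper: Legendre's formula shows the exponent of $2$ in $(2k+1)!$ equals that in $(2k)!$, and the point-stabilizer embedding (the fixed point supplied by Lemma~\ref{Action}) turns the equality of orders into the claimed isomorphisms via Sylow conjugacy. Your write-up is in fact tighter than the paper's, which jumps from ``sizes are equal'' to ``so these maximal 2-subgroups are isomorphic'' before invoking the fixed-point argument, whereas you make explicit the key step that a Sylow $2$-subgroup of the stabilizer is already of full Sylow order in the ambient group.
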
 
 \begin {proof}
 Sizes of these subgroups are equal to each other according to Legender's formula which counts power of 2 in $(2k+1)!$ and $(2k)!$ we obtain  that these powers are equal. So these maximal 2-subgroups are isomorphic. From Statment 1 can be obtained that vertex with number $2k+1$ will be fixed to hold even number of transpositions on $X^{k_1}$ from decomposition of $n$ which is in Statement 1. For instance $Syl_2(A_{7})\simeq Syl_2(A_{6})$ and by the way $Syl_2(A_{6})\simeq C_2 \wr C_2 \simeq D_4$, $Syl_2(A_{11})\simeq Syl_2(A_{10}) \simeq C_2 \wr C_2 \wr C_2 $.
\end {proof}

 \begin {proof}
 Sizes of these subgroups are equal to each other according to Legender's formula which counts power of 2 in $(2k+1)!$ and $(2k)!$ we obtain  that these powers are equal. So these maximal 2-subgroups are isomorphic. From Statment 1 can be obtained that vertex with number $2k+1$ will be fixed to hold even number of transpositions on $X^{k_1}$ from decomposition of $n$ which is in Statement 1. For instance $Syl_2(A_{7})\simeq Syl_2(A_{6})$ and by the way $Syl_2(A_{6})\simeq C_2 \wr C_2 \simeq D_4$, $Syl_2(A_{11})\simeq Syl_2(A_{10}) \simeq C_2 \wr C_2 \wr C_2 $.
\end {proof}

Let us denote by  $S(n)$ and $S[n]$ a minimal generating system of $Sy{{l}_{2}}({{A}_{n}})$, $Sy{{l}_{2}}({{S}_{n}})$ correspondently.
Let ${{n}_{m}}={{2}^{{{k}_{0}}}}+{{2}^{{{k}_{1}}}}+...+{{2}^{{{k}_{m}}}}$, where $0\le {{k}_{0}}<{{k}_{1}}<...<{{k}_{m}}$  and  $m\ge 0$.
\begin{definition}  We shall call the top of system of generators a portion of generators from subgroup with the largest degree ${{k}_{m}}$, which belong to tuples, that represent elements from subdirect product,  where there are not other non-trivial elements.
\end{definition}

\begin {theorem} Any minimal set of generators for $Sy{{l}_{2}}{{A}_{n}}$ has $\sum\limits_{i=0}^{m}{{{k}_{i}}}-1$ generators, if $m>0$, and it has ${{k}_{0}}$ generators, if $m=0$.
 \end {theorem}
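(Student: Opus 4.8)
The plan is to compute the rank through the Burnside basis theorem, $rk(Syl_2A_n)=\dim_{\mathbb F_2}\bigl(H/\Phi(H)\bigr)$ with $H=Syl_2A_n$ and $\Phi(H)=H^2[H,H]$, sandwiching this dimension between a lower bound built from homomorphisms $H\to C_2$ and an upper bound given by an explicit generating set. I would first strip off the two degenerate situations. When $n$ is odd, Lemma~\ref{Action} and the Remark following it show that the action of $Syl_2A_n$ fixes the point living in the trivial factor $AutX^{[0]}$, so $Syl_2A_n\simeq Syl_2A_{n-1}$ and the exponent $k_0=0$ disappears from the decomposition; hence it is enough to treat even $n$. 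When $m=0$, i.e.\ $n=2^{k_0}$, the assertion is exactly the Main Theorem together with its Corollary on the quotient by the Frattini subgroup, which give $rk=k_0$. Thus the genuine case is even $n$ with $m>0$.

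For even $n$ with $m>0$ I would use the decomposition already established: $Syl_2S_n\simeq\prod_{i=0}^mAutX^{[k_i]}=:G$, and $H=Syl_2A_n$ is the even subdirect product $AutX^{[k_0]}\boxtimes\cdots\boxtimes AutX^{[k_m]}$, namely the index-$2$ kernel of the parity (sign) homomorphism $\varepsilon\colon G\to C_2$ cut out by the single congruence~(\ref{congruen}). Write $\alpha^{(i)}_0,\dots,\alpha^{(i)}_{k_i-1}$ for the standard generators of the $i$-th factor, $\alpha^{(i)}_l$ carrying one active v.p.\ at level $l$; by Lemma~\ref{even} only the deepest ones $\alpha^{(i)}_{k_i-1}$ act oddly on the leaves, so $\varepsilon$ is the mod-$2$ count of the top-level v.p.\ taken over all factors. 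For the lower bound I would produce $\sum_{i=0}^m(k_i-1)$ level-parity homomorphisms $\varphi^{(i)}_l\colon H\to C_2$ ($0\le l\le k_i-2$), as in Corollary~\ref{qoutient}, together with $m$ further homomorphisms obtained from the $m+1$ top-level parities subject to the one relation $\varepsilon=0$; these assemble into a surjection $H\to\mathbb F_2^{\sum k_i-1}$, whence $rk(H)\ge\sum_{i=0}^mk_i-1$. For the matching upper bound I would take the even intra-factor generators $\{\alpha^{(i)}_l:0\le l\le k_i-2\}$ together with the $m$ bridges $b_i:=\alpha^{(i-1)}_{k_{i-1}-1}\alpha^{(i)}_{k_i-1}$ ($1\le i\le m$), of total size $\sum_{i=0}^m(k_i-1)+m=\sum_{i=0}^mk_i-1$; denote this set $S$.

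The crux, and the source of the $-1$, is to show that $S$ generates $H$ while no extra bottom-level homomorphism survives. The even generators alone generate $\prod_iB^{(i)}_{k_i-1}$ (Corollary~\ref{B_k-1}). Each bridge lies in $H$, and for a factor with $k_i\ge2$ its commutator with an upper generator of that same factor cancels the partner transposition, $[b_i,\alpha^{(i)}_l]=[\alpha^{(i)}_{k_i-1},\alpha^{(i)}_l]$, which is a genuine element $\tau^{(i)}$ of type \texttt{T} sitting inside the single factor $i$; hence $\tau^{(i)}\in[H,H]\le\Phi(H)$. Conjugating $\tau^{(i)}$ by $B^{(i)}_{k_i-1}$ sweeps out the base $\tau_{12},\tau_{23},\dots$ of $W^{(i)}_{k_i-1}$ exactly as in Statement~\ref{comm} and Lemma~\ref{gen}, so $\langle S\rangle\supseteq\prod_iG_{k_i}$; since the images of $b_1,\dots,b_m$ span the hyperplane $H/\prod_iG_{k_i}\simeq\mathbb F_2^m$, the set $S$ generates $H$. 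This is precisely the step that fails for $m=0$: with no second factor to supply a partner transposition, the type-\texttt{T} element is not a commutator (Lemma~\ref{About not closed set of element of type T} and Statement~\ref{comm}), the half-parity homomorphism $\phi_{k_0-1}$ of Corollary~\ref{qoutient} survives as one additional generator of $H/\Phi(H)$, and the count climbs back from $k_0-1$ to $k_0$.

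I expect the principal obstacle to be the bottom-level bookkeeping when $m>0$: one must verify simultaneously that the $m$ coupled top-level parities are independent and realizable on $H$ (surjectivity onto the hyperplane) and that, because every $\tau^{(i)}$ now lies in $[H,H]$, the half-parity functionals $\phi_{k_i-1}$ cease to be homomorphisms on $H$, so that the lower and upper bounds meet exactly at $\sum_{i=0}^mk_i-1$. The remaining points — that $\prod_iB^{(i)}_{k_i-1}$ acts transitively enough to sweep a single $\tau^{(i)}$ over all even configurations of $X^{k_i-1}$, and that the bridge images are $\mathbb F_2$-independent in $H/\prod_iG_{k_i}$ — are routine consequences of Lemmas~\ref{gen}, \ref{comm} and the order count of Lemma~\ref{ordG_k}.
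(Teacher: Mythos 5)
Your core argument --- for even $n$ with every $k_i\ge 1$ --- is sound, and it follows the same two-bound strategy as the paper: both proofs work inside the even subdirect product $Syl_2S_{2^{k_0}}\boxtimes\cdots\boxtimes Syl_2S_{2^{k_m}}$, both obtain the lower bound from level-parity homomorphisms onto $C_2$ whose top-level members are coupled by the single congruence (this is exactly the paper's surjection onto $C_2^{8}$ in its $A_{28}$ computation, onto $C_2^{\sum k_i-1}$ in general), and both finish by exhibiting a generating set of size $\sum k_i-1$. The differences lie in the upper bound. The paper pairs every odd-structure generator of each smaller factor with the one odd generator of the largest factor (a star pattern) and justifies generation by a Tietze-transformation remark plus an order count via Legendre's formula; you keep the even intra-factor generators unpaired, insert $m$ chain bridges between consecutive factors, and prove generation structurally (bridge commutators give type-\texttt{T} elements, conjugation sweeps out $W^{(i)}_{k_i-1}$, hence $\prod_i G_{k_i}\le\langle S\rangle$, and the bridge images span the residual $\mathbb{F}_2^{m}$). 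Your explicit appeal to the Burnside basis theorem also converts ``rank $=\sum k_i-1$'' into the statement about \emph{every} minimal generating set more cleanly than the paper does.

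Two points need repair. The smaller one: ``its commutator with an upper generator'' must be the commutator with the root generator $\alpha^{(i)}_0$ specifically. For general $l$ the element $[\alpha^{(i)}_{k_i-1},\alpha^{(i)}_l]$ is a pair of last-level transpositions at tree distance $2(k_i-1-l)$, which is \emph{not} of type \texttt{T} when $l>0$; by Lemma \ref{Lemma about keeping of distance} its conjugates and products then never yield all of $W^{(i)}_{k_i-1}$ (for $l=k_i-2$ you get only the span of the sibling pairs, of dimension $2^{k_i-2}$ rather than $2^{k_i-1}-1$). The larger one: your opening reduction for odd $n$ actually contradicts the formula being proved in the case $n=2^{k}+1$, i.e. $m=1$, $k_0=0$. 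There the theorem claims rank $k-1$, but your reduction gives $Syl_2A_n\simeq Syl_2A_{2^{k}}$, whose rank is $k$ by the Main Theorem; concretely, $Syl_2A_5\simeq C_2\times C_2$ has rank $2$, not $1$. In this case the factor $AutX^{[0]}$ is trivial, the congruence couples nothing (so no generator is saved), and your bridge $b_1$ does not even exist. This is really a defect of the theorem as stated rather than of your argument alone --- the paper's ``one extra relation means one generator fewer'' step silently assumes at least two nontrivial factors --- but a complete write-up must either exclude this case or observe that the $-1$ applies only when at least two of the $k_i$ are positive.
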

\begin{proof}
Construction of generating set $S({{n}_{m}})$ is such that it contains on the first  coordinate all generators from $Sy{{l}_{2}}{{S}_{{{2}^{{{k}_{0}}}}}}$, it contains  all generators of $Sy{{l}_{2}}{{S}_{{{2}^{{{k}_{1}}}}}}$ on the second one, analogously it contains on $i$-th coordinate  all generators of $Sy{{l}_{2}}{{S}_{{{2}^{{{k}_{i}}}}}}$,  $0<i\le m$. Hence we can generate on $i$-th coordinate an arbitrary element from  $Sy{{l}_{2}}{{S}_{{{2}^{{{k}_{i}}}}}}$.  States on ${{X}^{{{k}_{0}}-1}}\sqcup {{X}^{{{k}_{1}}-1}}\sqcup ...\sqcup {{X}^{{{k}_{m}}-1}}$ are related by congruence:
\begin{equation}\label{cong}
\sum\limits_{i=0}^{m}{\sum\limits_{j=1}^{{{2}^{{{k}_{i}}-1}}}{{{s}_{{{k}_{i}}-1j}}(\alpha_i )\equiv 0\left( \bmod 2 \right)}} 	
\end{equation}
This congruence holds due to structure of constructed by us system of generators.

Let us consider a construction of a system of generators for $Sy{{l}_{2}}{{A}_{n}}$ and prove its minimality. Consider an induction base on the example of the group ${{A}_{28}}$, since  ${{A}_{28}}=Sy{{l}_{2}}{{A}_{4}}\times Sy{{l}_{2}}{{A}_{8}}\times Sy{{l}_{2}}{{A}_{16}}$, according to the proved fact  it is equivalent to a construction of minimal system of generators for $Sy{{l}_{2}}{{A}_{28}}=Sy{{l}_{2}}{{S}_{4}}\boxtimes Sy{{l}_{2}}{{S}_{8}}\boxtimes Sy{{l}_{2}}{{S}_{16}}$. All elements of this system define odd substitutions on the respective sets ${{X}^{2}}$ and ${{X}^{3}}$  by the generators of the subgroups $Aut{{X}^{[1]}}=Sy{{l}_{2}}{{S}_{4}}=\left\langle {{\alpha }_{0,1(1)}},{{\alpha }_{1}} \right\rangle $, and $Sy{{l}_{2}}{{S}_{8}}$  where $\left\langle {{\alpha }_{0,2(1)}},{{\alpha }_{1(2),2(1)}},{{\alpha }_{2}} \right\rangle =Aut{{X}^{[3]}}=Sy{{l}_{2}}{{S}_{8}}$, ${{\alpha }_{2}}={{\alpha }_{2(1)}}$. Last subgroup of the subdirect  product has the next generating system $Sy{{l}_{2}}{{S}_{16}}=\left\langle {{\alpha }_{0}},\,\,{{\alpha }_{1(1)}},\,\,{{\alpha }_{2(1)}},\,\,{{\alpha }_{3(1)}} \right\rangle $  hence here only the last element  determines  odd permutation on ${{X}^{4}}$.
Other generators for  $Sy{{l}_{2}}{{S}_{8}}$ can be decomposed in a product of two special elements ${{\alpha }_{2}}{{\alpha }_{i}}={{\alpha }_{i(1);2(1)}}$. We call \emph{such structure} of element as \emph{odd structure}.
Generators  of  $Syl_{2}{S_4}$,  $0 \le i<2$ have the same structure ${{\alpha }_{1}}{{\alpha }_{0}}={{\alpha }_{0;1(1)}}$, where $0\le i<1$. Hence a product of two arbitrary elements from the set $\left\{ {{\alpha }_{0,2(1)}},{{\alpha }_{1(2),2(1)}},{{\alpha }_{2}} \right\}$ is automorphism that has even number of active states of v.p. on ${{X}^{2}}$. Analogously statement  is true for $\left\{ {{\alpha }_{0,1(1)}},{{\alpha }_{1}} \right\}$ on ${{X}^{1}}$ and for ${{\alpha }_{1(1)}},\,\,{{\alpha }_{2(1)}},\,\,{{\alpha }_{3(1)}}$ on ${{X}^{3}}$.  Now  rename  generators of group  $Sy{{l}_{2}}{{S}_{4}}$ as  ${{\beta }_{0,1(1)}}={{t}_{0}},\,\,{{\beta }_{1}}={{t}_{1}}$, for the second group as ${{\alpha }_{0,1(1),2(1)}}={{s}_{0}},\,\,{{\alpha }_{1,2(1)}}={{s}_{1}},\,\,{{\alpha }_{2}}={{s}_{2}}$, rename  generators for $Sy{{l}_{2}}{{S}_{16}}$ as ${{f}_{0}}={{\alpha }_{0}},\,...\,,{{f}_{3}}={{\alpha }_{3(1)}}$. A characteristic feature of the built elements is, that those of them, which are generators of the same subgroup $Sy{{l}_{2}}{{S}_{{{2}^{i}}}}$ and implement odd substitutions, contains the unique common 2-cycle, that implements odd substitution on the set, where the correspondent  group acts. Thus, for  $Aut{{X}^{[2]}}$ it is the cycle $(1,2)$ on set with 4 elements. Therefore, according to the given by us structure of generators of groups $Sy{{l}_{2}}{{S}_{{{2}^{i}}}}$, those of them, that implement odd substitutions, their products and their squares, are already even substitutions.
The system of generators for $Sy{{l}_{2}}{{A}_{28}}$, $Sy{{l}_{2}}{{A}_{28}}=Sy{{l}_{2}}{{S}_{4}}\boxtimes Sy{{l}_{2}}{{S}_{8}}\boxtimes Sy{{l}_{2}}{{S}_{16}}$ is $\left\langle({{s}_{0}},{{t}_{0}},e), (t_0, e, f_3), (e, s_{1}, f_3), ({{s}_{1}},{{t}_{0}},e),(e,{{t}_{1}},{{f}_{3}}),(e,{{t}_{2}},{{f}_{3}}),(e,e,\,{{f}_{2}}),(e,e,\,{{f}_{1}}),(e,e,\,{{f}_{0}}) \right\rangle $ that is why substitutions, that are defined by generators ${{f}_{2}},\,\,{{f}_{1}},\,\,{{f}_{0}}$ must be even (we shall call just this \textbf{\emph{set as top}}) and ${{f}_{3}}$ must has odd structure.


Since for the group $Sy{{l}_{2}}{{S}_{{{2}^{{{k}_{i}}}}}}$ there exists a surjective homomorphism $({{\varphi }_{0}},\,{{\varphi }_{1}},\,{{\varphi }_{2}},...,\,{{\varphi }_{{{k}_{i}}-1}})$ onto $C_{2}^{k_{i}}$, where ${{\varphi }_{i}}$ is homomorphism from ${{G}_{{{k}_{i}}}}(l)$ onto ${{C}_{2}}$, so there  exists a surjective homomorphism from $Sy{{l}_{2}}{{S}_{4}}\times Sy{{l}_{2}}{{S}_{8}}\times Sy{{l}_{2}}{{S}_{16}}$  onto $C_{2}^{9}$ \cite{Gr}.
We apply it to every ${{G}_{{{k}_{i}}}}(l)$, which are the subgroups of $Sy{{l}_{2}}{{S}_{4}}\times Sy{{l}_{2}}{{S}_{8}}\times Sy{{l}_{2}}{{S}_{16}}$, where ${{k}_{i}}\in \left\{ 2,\,3,\,4 \right\}$ and $l$ changes from 0 to ${{2}^{{{k}_{i}}}}-1$ for each ${{k}_{i}}$.

Show, that there is enough $\sum\limits_{i=0}^{m}{{{k}_{i}}}-1$ generators in general case, when $m>0$. Arbitrary element from   can be generated on the first coordinate as follows $({{s}_{0}},{{\beta }_{1}},e)({{s}_{1}},{{\beta }_{1}},e)=({{\alpha }_{0}},e,e)$ and $({{\alpha }_{0}},e,e)({{s}_{0}},{{\beta }_{1}},e)=({{\alpha }_{1(2)}},{{\beta }_{1}},e)$  then $({{\alpha }_{1}},{{\beta }_{1}},e)({{\alpha }_{0,1(1)}},{{\beta }_{1}},e)=({{\alpha }_{1(12)}},e,e)$. Hence, we have derived on the first coordinate all non-trivial elements from $Sy{{l}_{2}}{{A}_{4}}$.

 All automorphisms, which has  odd structure,  provides  an odd permutations  on ${{X}^{{{k}_{i}}}}$, which contain in its cyclic decomposition the unique cycle of length 2 (in  common case it is  odd number of such cycles) that rearranges neighboring elements  from $X^{{{k}_{i}}}$ and arises under the action of active state from ${{X}^{{{k}_{i}}-1}}$, all other cycles in substitutions are even, because they are defined by states from  ${{X}^{k-j}},\,\,1<j<{{k}_{i}}$. Thus, in the multiplication of these automorphisms, reduction of such cycle is obtained or one more conjugated  to it 2 cycle appears in the correspondent substitution.

The considered by us group $Sy{{l}_{2}}{{S}_{4}}\boxtimes Sy{{l}_{2}}{{S}_{8}}\boxtimes Sy{{l}_{2}}{{S}_{16}}$ is a quotient group of $Sy{{l}_{2}}{{S}_{4}}\times Sy{{l}_{2}}{{S}_{8}}\times Sy{{l}_{2}}{{S}_{16}}$ by normal \emph{closure} ${{R}^{G}}$, where $R$ is the relation
that can be derived from
 the congruence \ref{cong}. Thus from the group $Sy{{l}_{2}}{{S}_{4}}\boxtimes Sy{{l}_{2}}{{S}_{8}}\boxtimes Sy{{l}_{2}}{{S}_{16}}$  has on 1 generator less than  $Sy{{l}_{2}}{{S}_{4}}\times Sy{{l}_{2}}{{S}_{8}}\times Sy{{l}_{2}}{{S}_{16}}$ because we have new relation from which this generator can be expressed
 from form ${{t}^{-1}}\omega ,\,\,\omega \in Sy{{l}_{2}}{{A}_{28}}$ and $t$  is one of generators for $Sy{{l}_{2}}{{S}_{4}}\times Sy{{l}_{2}}{{S}_{8}}\times Sy{{l}_{2}}{{S}_{16}}$. Hence now $t$ can be express due to a new relation similar
as it was in transformation of Tietce. Thus we have on one generator less because $t$ is depends from rest of generators for $Sy{{l}_{2}}{{S}_{4}}\times Sy{{l}_{2}}{{S}_{8}}\times Sy{{l}_{2}}{{S}_{16}}$.

For the group $Sy{{l}_{2}}{{A}_{28}}$ there exists a surjective homomorphism $({{\varphi }_{0}},\,{{\varphi }_{1}},\,{{\varphi }_{2}},\,{{\varphi }_{{{k}_{3}}}})$ onto $C_{2}^{8}$, which is built previously. We apply it to every ${{G}_{{{k}_{i}}}}(l)$, which are the subgroups of $Sy{{l}_{2}}{{S}_{4}}\boxtimes Sy{{l}_{2}}{{S}_{8}}\boxtimes Sy{{l}_{2}}{{S}_{16}}$, where ${{k}_{i}}\in \left\{ 2,\,3,\,4 \right\}$ and  $l$ changes from $0$  to ${{2}^{{{k}_{i}}}}-1$ for each ${{k}_{i}}$.
The main property of a homomorphism holds for all ${{G}_{{{k}_{i}}}}(l)$.
Actually, a parity of index of $\alpha \cdot \beta $  on ${{X}^{l}}$ is equal to sum by $\bmod \,2$  of indexes of $\alpha $ and $\beta $  hence
$\varphi ({{\alpha }_{l}}\cdot {{\beta }_{l}})=\left( \varphi ({{\alpha }_{l}})+\varphi ({{\beta }_{l}}) \right)$ because multiplication  $\alpha \cdot \beta $ in  ${{G}_{k}}$ does not change a parity of index of $\beta $,  $\beta \in {{G}_{k}}$ on ${{X}^{l}}$.
This parity depends only from parities of $\alpha $ and $\beta $ on ${{X}^{l}}$. That's why the described mapping is a homomorphism onto ${{C}_{2}}$. The same is true for every $l\in \{0,...,3\}$ and  for every ${{k}_{i}}\in \left\{ 2,\,3,\,4 \right\}$ from  ${{G}_{{{k}_{i}}}}(l)$. A group ${{A}_{{{n}_{m}}}}$ has the similar structure $Sy{{l}_{2}}{{S}_{{{2}^{{{k}_{0}}}}}}\boxtimes \ldots \boxtimes Sy{{l}_{2}}{{S}_{{{2}^{{{k}_{m-1}}}}}}\boxtimes Sy{{l}_{2}}{{S}_{{{2}^{{{k}_{m}}}}}}$ and main property of constructed homomorphism $\varphi ({{\alpha }_{l}}\cdot {{\beta }_{l}})=\left( \varphi ({{\alpha }_{l}})+\varphi ({{\beta }_{l}}) \right)$ holds by the same reason for every, $l\in \left\{ 0,...,{{k}_{m}}-1 \right\}$. 

 Let us separate out a subgroup which corresponds to the last level, at which the active states exist. The parity of the index of this subgroup, let it be ${{G}_{4}}(3)$, depends also on the parities of all other level subgroups of  $Aut{{X}^{[2]}}\boxtimes Aut{{X}^{[3]}}\boxtimes Aut{{X}^{[4]}}$ because relation \ref{cong} holds.
 Thus $Sy{{l}_{2}}{{A}_{28}}$ has minimal generating system from 8 elements.

Let us present $S(28)$ in form useful for generalization and for counting rank of $S(28)$: \\ $\left\langle ({{s}_{0}},e,{{f}_{3}}),({{s}_{1}},e,{{f}_{3}}),(e,\,{{t}_{0}},\,{{f}_{3}}),(e,{{t}_{1}},{{f}_{3}}),(e,{{t}_{2}},{{f}_{3}}),(e,e,\,{{f}_{2}}),(e,e,\,{{f}_{1}}),(e,e,\,{{f}_{0}}) \right\rangle $. \\ So $rk(S(28))$=8. Obviously that generator ${{f}_{3}}$ from generating system of $Sy{{l}_{2}}{{S}_{16}}$ occurs only in combination with  generators of other subgroups and any other of generators occurs only one time. Thus
formula $\sum\limits_{i=0}^{m}{{{k}_{i}}}-1$ holds.
This result was confirmed by algebraic system GAP by which it was founded the minimal generating set for $Syl_2 A_{28}$ from 8 elements: $(25,27)(26,28),  (23,24)(25,26),  (17,21)(18,22)(19,23)(20,24),  (17,19)(18,20),\\  (15,16)(17,18),
  (1,9)(2,10)(3,11)(4,12)(5,13)(6,14)(7,15)(8,16),  (1,5)(2,6)(3,7)(4,8),\\ (1,3)(2,4)$. Thus, the base of induction is checked.

The construction of homomorphism and minimal generating set for case $n={{2}^{k}}$ was fully investigated in Corollary \ref{qoutient}.

Show,  that there is enough $\sum\limits_{i=0}^{m}{{{k}_{i}}}-1$ generators in general case, when $m>0$.
There are all ${{k}_{i}}$ generators of  $Sy{{l}_{2}}{{A}_{{{2}^{{{k}_{i}}}}}}$ on  $i$-th coordinate. Consider the example from the base of induction  arbitrary element from $Sy{{l}_{2}}{{A}_{4}}$ can be generated on the first coordinate as follows $({{s}_{0}},{{\beta }_{1}},e)({{s}_{1}},{{\beta }_{1}},e)=({{\alpha }_{0}},e,e)$ and $({{\alpha }_{0}},e,e)({{s}_{0}},{{\beta }_{1}},e)=({{\alpha }_{1(2)}},{{\beta }_{1}},e)$  then $({{\alpha }_{1}},{{\beta }_{1}},e)({{\alpha }_{0,1(1)}},{{\beta }_{1}},e)=({{\alpha }_{1(12)}},e,e)$. Hence, we have derived on the first coordinate all non-trivial elements from $Sy{{l}_{2}}{{A}_{4}}$.
 All automorphisms, which  has  odd structure,  provides  an odd permutations  on ${{X}^{{{k}_{i}}}}$, which contain in its cyclic decomposition the unique cycle of length 2 (in  common case it is  odd number of such cycles) that rearranges neighboring elements  from $X^{{{k}_{i}}}$ and arises under the action of active state from ${{X}^{{{k}_{i}}-1}}$, all other cycles in substitutions are even, because they are defined by states from  ${{X}^{k-j}},\,\,1<j<{{k}_{i}}$. Thus, in the multiplication of these automorphisms, reduction of such cycle is obtained or one more conjugated  to it 2 cycle appears in the correspondent substitution.

So, closure holds with respect to parity of obtained  tuples  of  elements as a result of multiplication in the whole subdirect product, because in the product automorphism is reduced on the second coordinate, which is determined   by the same state, that is determined  by ${{\beta }_{3}}$. It is so because it has the order 2 therefore $e$ are on other coordinates, and automorphism on the first coordinate becomes an even substitution already after the first multiplication by $({{s}_{0}},{{\beta }_{1}},e)$. Indeed, system of generators for $\left\langle {{t}_{0}},\,{{t}_{1}} \right\rangle $ has such cyclic structure, that the same cycle implements the unique odd substitution - this is the cycle of length 2, therefore it is reduced as a result of multiplication of two such generators and even substitution is obtained, the generators ${{s}_{0}},\,{{s}_{1}},\,{{s}_{2}}$,   ${{f}_{0}},{{f}_{1}},{{f}_{2}},{{f}_{3}}$  for $Sy{{l}_{2}}{{A}_{8}}$ and $Sy{{l}_{2}}{{A}_{16}}$  correspondently have the same structure, so this is the system of generators, that has only 7 elements, while the direct product of these groups should have 9 elements.  Thus, for this example the statement is true. Let us show, that minimal system of generators can be constructed  by the same method for ${A_{n_m}}$ that contains in subdirect product the factor $Sy{{l}_{2}}{{S}_{{2^{{k_m}}}}}$.

All generator except those from the top of generating  system for $Sy{{l}_{2}}{{S}_{{{2}^{{{k}_{m}}}}}}$ has odd structure. For $Sy{{l}_{2}}{{S}_{{{2}^{{{k}_{m}}}}}}$ we choose one generator  that realize odd permutation on ${{X}^{{{k}_{m}}}}$ analogously as ${{\alpha }_{{{k}_{m}}-1}}$ and other generators ${{\alpha }_{{{k}_{0}}}},...,{{\alpha }_{{{k}_{m}}-2}}$ belong to top of system of generators   of $Sy{{l}_{2}}{{S}_{{{2}^{{{k}_{m}}}}}}$, so they define even substitutions on ${{X}^{{{k}_{m}}}}$. Here every separately taken generator for the separated subgroup $Sy{{l}_{2}}{{S}_{{{2}^{{{k}_{i}}}}}},\,\,\,{{k}_{i}}<{{k}_{m}}$  defines odd substitution and has odd structure, only generators from \emph{the top of system of generators} of the subgroup $Sy{{l}_{2}}{{S}_{{{2}^{{{k}_{m}}}}}}$ implement even substitutions. Only one generator from $S({{2}^{{{k}_{m}}}})$ has odd structure and only this generator stands in pair with generator from $S({{2}^{{{k}_{m}}-1}})$ in  tuple of generators of subdirect product for $Sy{{l}_{2}}{{S}_{{{2}^{{{k}_{0}}}}}}\boxtimes Sy{{l}_{2}}{{S}_{{{2}^{{{k}_{1}}}}}}\boxtimes \,\,...\,\,\boxtimes Sy{{l}_{2}}{{S}_{{{2}^{{{k}_{m}}}}}}$. Thus  rank  of $S({{n}_{m}})$ in comparing to $S({{n}_{m-1}})$ will grows on ${{k}_{m}}$. Thus rank of $S({{n}_{m}})$ became to be $\sum\limits_{i=1}^{m}{{{k}_{i}}}-1$ as it states in formula from condition.

For the subgroup $Sy{{l}_{2}}{{S}_{{{2}^{{{k}_{i}}}}}},\,\,i<m$, distinguish its subgroups of the form ${{G}_{{{k}_{i}}}}(l),\,\,l<{{k}_{i}}-1$ elements from this subgroup act on ${{X}^{{{k}_{i}}}}$ by even substitutions and therefore are not linked with elements ${{G}_{{{k}_{j}}}}(s),\,\,s<{{k}_{j}}-1$, where $i\ne j$, ${{k}_{i}}-s={{k}_{j}}-l$ by parity relation and they forming a direct product. States from last levels ${{X}^{{{k}_{0}}}},...,{{X}^{{{k}_{m}}}}$ are linked by parity relation and form the subdirect product, in which there is only even number of states in every tuple with $m+1$  coordinates.
Construction of generating system $S({{n}_{m}})$  is such that it presents  on the first  coordinate all generators from  $Sy{{l}_{2}}{{S}_{{{2}^{{{k}_{0}}}}}}$, it presents  all generators of $Sy{{l}_{2}}{{S}_{{{2}^{{{k}_{1}}}}}}$ on the second one, analogously it presents on $i $-th coordinate  all generators of $Sy{{l}_{2}}{{S}_{{{2}^{{{k}_{i}}}}}}$, $0<i\le m$. Hence we can generate on i-th coordinate an arbitrary element from $Sy{{l}_{2}}{{S}_{{{2}^{{{k}_{i}}}}}}$. A states on last level of ${{X}^{[{{k}_{0}}-1]}}\sqcup {{X}^{[{{k}_{1}}-1]}}\sqcup ...\sqcup {{X}^{[{{k}_{m}}-1]}}$ are related by congruence \ref{cong}.
And form the subgroup of $H=Sy{{l}_{2}}{{S}_{{{2}^{{{k}_{0}}}}}}\boxtimes \,\,\,...\,\,\,\boxtimes Sy{{l}_{2}}{{S}_{{{2}^{{{k}_{m}}}}}}$ that is a quotient group of $G=Sy{{l}_{2}}{{S}_{{{2}^{{{k}_{0}}}}}}\times \,\,\,...\,\,\,\times Sy{{l}_{2}}{{S}_{{{2}^{{{k}_{m}}}}}}$  by normal c  closure ${{R}^{G}}$, where $R$ is the relation equivalent to relation \ref{cong}. Since as in proof of induction base we have that the group $H$  has on 1 generator less than $G$ because we have new relation \ref{cong}
that does not belongs to normal closure of relations from the group $G$ and $R$ does not contains new generators  than those for $G$, therefore one of old generators can be expressed.
This congruence holds due to the structure of constructed by us system of generators.

 To prove that proposed by us system generates whole group $Sy{{l}_{2}}{{A}_{n}}$ let us prove that there is equality of orders of $\left\langle S(n) \right\rangle $ and $Sy{{l}_{2}}{{A}_{n}}$.
Order of $Sy{{l}_{2}}{{S}_{n}}$ is ${{2}^{{{2}^{{{k}_{0}}}}+...+{{2}^{{{k}_{m}}}}-(m+1)}}={{2}^{n-m-1}}$ because  $Syl{{S}_{{{2}^{k}}}}\simeq Aut{{X}^{[{{2}^{k}}]}}$  and its order is equal to ${{2}^{{{2}^{k}}-1}}$, where $n={{2}^{{{k}_{0}}}}+{{2}^{{{k}_{1}}}}+...+{{2}^{{{k}_{m}}}}$. Thus  $\left| Sy{{l}_{2}}{{A}_{n}} \right|={{2}^{n-m-2}}$.
But order of constructed by us quotient group $Sy{{l}_{2}}{{S}_{{{2}^{{{k}_{0}}}}}}\boxtimes Sy{{l}_{2}}{{S}_{{{2}^{{{k}_{1}}}}}}\boxtimes \,\,...\,\,\boxtimes Sy{{l}_{2}}{{S}_{{{2}^{{{k}_{m}}}}}}$ is also equal to ${{2}^{n-m-2}}$ because the relation \ref{cong} restricts class of states from last level of ${{X}^{[{{k}_{0}}-1]}}\sqcup {{X}^{[{{k}_{1}}-1]}}\sqcup ...\sqcup {{X}^{[{{k}_{m}}-1]}}$  to the subclass that realize even permutations on ${{X}^{n}}$.  The same order for $\frac{n!}{2}$  give us Legender's formula for finding maximal power of 2 which divides $\frac{n!}{2}$:

 $\left[ \frac{n}{2} \right]+...+\left[ \frac{n}{{{2}^{{{k}_{m}}}}} \right]-1={{2}^{{{k}_{0}}}}+{{2}^{{{k}_{1}}}}+...+{{2}^{{{k}_{m}}}}-1=(n-m-1)-1=n-m-2$.
\end{proof}

Also from Statement \ref{comm} and corollary from it about $(AutX^{[k]})'$ can be deduced that derived length of $Syl_2 A_2^k$ is not always equal to $k$ as it was said in Lemma 3 of \cite{Dm} because in case $A_{2^k}$ if $k=2$ its $Syl_2 A_4 \simeq K_4$ but $K_4$ is abelian group so its derived length is 1.




\section{Some applications of constructed generating systems}

 As it was calculated in previous paragraph a total cardinality of ${{S}_{d}}$ for $Sy{{l}_{2}}{{A}_{{{2}^{k}}}}$ is ${{2}^{{{2}^{k-1}}-k-2}}{{({{2}^{k-2}})}^{2}}$.
Thus, if we associate generating set with alphabet and choice of generating set will be a private key, then it can be applied in cryptography \cite{Myasn}.
  More over our group $G_k$ has exponential grows of different generating sets and diagonal bases that can be used for extention of key space. Diagonal bases are useful for easy constructing of normal form \cite{Ushak} of an element $g\in {{G}_{k}}$.

Let us consider a function of Morse \cite{Shar} $f:\,{D^2}\to \mathbb{R}$ that painted at pict. 2 and graph of Kronrod-Reeb \cite{Maks} that obtained by contraction every set's component  of level of ${{f}^{-1}}(c)$ in point. Group of automorphism of this graph is isomorphic to $Sy{{l}_{2}}{{S}_{{{2}^{k}}}}$, where $k=2$ in general case we have regular binary rooted tree for arbitrary $k\in \mathbb{N}$.


\begin{figure}[h]
\begin{minipage}[h]{0.49\linewidth}
\center{\includegraphics[width=0.9\linewidth]{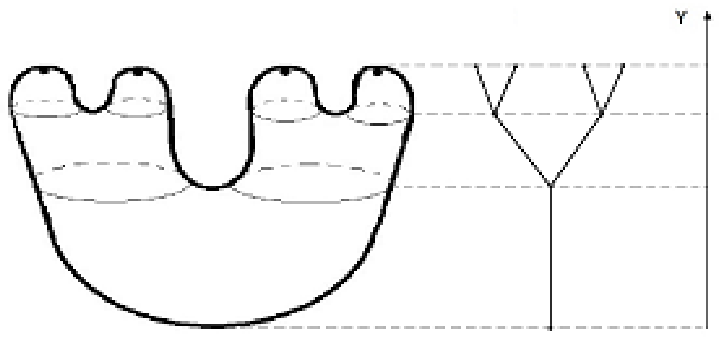} \\ Picture 2.}
\end{minipage}
\end{figure}
According to investigations of \cite{Maks2} for $D^2$ we have that $Syl_{2}S_{2^k} > G_k \simeq Syl_{2} A_{2^k}$ is quotient group of diffeomorphism group that stabilize function and isotopic to identity. Analogously to investigations of \cite{Maks, Maks2, SkThes} there is short exact sequence
$0\to {{\mathbb{Z}}^{m}}\to {{\pi }_{1}}{{O}_{f}}(f)\to G\to 0$, where $G$-group of automorphisms Reeb's (Kronrod-Reeb) graph \cite{Maks} and $O_f(f)$ is orbit under action of diffeomorphism group, so it could be way to transfer it for a group $Sy{{l}_{2}}(S_{2^k})$, where $m$ in ${{\mathbb{Z}}^{m}}$ is number of inner edges or vertices in Reeb's graph, in case for $Syl_{2}S_{4}$ we have $m=3$.

Higher half of projection of manifold from pic. 2 can be determed by product of the quadratic forms $-({{(x+4)}^{2}}+{{y}^{2}})({{(x+3)}^{2}}+{{y}^{2}})({{(x-3)}^{2}}+{{y}^{2}})({{(x-4)}^{2}}+{{y}^{2}})=z$ in points $(-4,0) (-3,0) (3,0) (4,0) $ it reach a maximum value 0. Generally there is $-d_{1}^{2}d_{2}^{2}d_{3}^{2}d_{4}^{2}=z$.

\section{ Conclusion }
The proof of minimality of constructed generating sets was done, also the description of the structure $Syl_2 A_{2^k}$, $Syl_2 A_{n}$ and its property was founded.

\end{document}